\DeclareFontFamily{U}{wncy}{}
\DeclareFontShape{U}{wncy}{m}{n}{<->wncyr10}{}
\DeclareSymbolFont{mcy}{U}{wncy}{m}{n}
\DeclareMathSymbol{\Sh}{\mathord}{mcy}{"58}
\numberwithin{equation}{section}
\theoremstyle{plain}
\newtheorem{theorem}{Theorem}[section]
\newtheorem*{theorem*}{Theorem}
\newtheorem{lemma}[theorem]{Lemma}
\newtheorem{proposition}[theorem]{Proposition}
\newtheorem{corollary}[theorem]{Corollary}
\newtheorem{conjecture}[theorem]{Conjecture}
\theoremstyle{remark}
\newtheorem{remark}[theorem]{Remark}
\newtheorem{example}[theorem]{Example}
\newtheorem*{lem*}{Lemma}
\newtheorem*{sublem*}{Sublemma}
\newtheorem*{remark*}{Remark}
\newtheorem*{NB*}{NB}
\newcommand{\supp}{\mathop{\rm supp}\nolimits}
\newcommand{\R}{ \mathbb{R} }
\newcommand{\C}{ \mathbb{C} }
\newcommand{\Z}{ \mathbb{Z} }
\newcommand{\N}{ \mathbb{N} }
\newcommand{\T}{ \mathbb{T} }
\newcommand{\mI}{\mathbb{I}}
\newcommand{\gD}{\mathfrak D}
\newcommand{\gF}{\mathfrak F}
\newcommand{\gC}{\mathfrak C}
\newcommand{\fB}{\mathfrak B}
\newcommand{\Del}{\Delta}
\newcommand{\ssum}{{\modtwosum}}
\newcommand{\vs}{ \vec s }
\newcommand{\cA}{ \mathcal{A} }
\newcommand{\cC}{ \mathcal{C} }
\newcommand{\Cc}{ \mathcal{C} }
\newcommand{\cY}{ \mathcal{Y} }
\newcommand{\cL}{ \mathcal{L} }
\newcommand{\cR}{ \mathcal{R} }
\newcommand{\cX}{ \mathcal{X} }
\newcommand{\cK}{ \mathcal{K} }
\newcommand{\cF}{ \mathcal{F} }
\newcommand{\EE}{ {\mathbb E}}
\newcommand{\Lc}{ \mathcal{L} }
\newcommand{\cZ}{ \mathcal{Z} }
\newcommand{\Rc}{ \mathcal{R} }
\newcommand{\cT}{ \mathcal{T} }
\newcommand{\om}{ \omega }
\newcommand{\ga}{\gamma }
\newcommand{\s}{ \sigma }
\newcommand{\ka}{ \kappa }
\renewcommand{\phi}{ \varphi }
\newcommand{\oms}{ \omega^{12}_{3s}  }
\newcommand{\eps}{\varepsilon}
\newcommand{\de}{ \delta }
\newcommand{\al}{ \alpha }
\newcommand{\zz}{\mathfrak z}
\newcommand{\la}{ \lambda }
\newcommand{\dess}{\delta'^{12}_{3s}}
\newcommand{\tauz}{\tau_0}
\newcommand{\be}{\begin{equation}}
\newcommand{\ee}{\end{equation}}
\newcommand{\ben}{\begin{equation*}}
\newcommand{\een}{\end{equation*}}
\newcommand{\ov}{ \overline }
\newcommand{\lan}{ \langle }
\newcommand{\ran}{ \rangle}
\newcommand{\p}{ \partial}
\newcommand{\e}{ \text{e} }
\newcommand{\ai}{ \mathfrak{a} }
\newcommand{\wt}{ \widetilde }
\newcommand{\lbl}{\label}
\newcommand{\non}{\nonumber}
\newcommand{\qu}{\quad}
\newcommand{\qmb}{\quad\mbox}
\newcommand{\qnd}{\qmb{and}\qu}
\newcommand{\cS}{\mathcal{S}}
\newcommand{\atw}[1]{a^{(2)}_{#1}}
\newcommand{\bo}[1]{\bar a^{(1)}_{#1}}
\newcommand{\nL}[1]{n_{#1,L}}
\newcommand{\no}{\mathfrak{n}}
\newcommand{\dep}{\delta'^{12}_{3s}}
\title[The large-period limit]
{The large-period limit for  equations of discrete turbulence}
\author{Andrey  Dymov}
\address{Andrey Dymov \\ Steklov Mathematical Institute of RAS, Moscow 119991, Russia 
	\& National Research University Higher School of
	Economics, Moscow 119048, Russia} \email{dymov@mi-ras.ru}
\author{Sergei Kuksin}
\address{Sergei Kuksin \\ Universit\'e Paris-Diderot (Paris 7), UFR de Math\'ematiques - Batiment Sophie Germain, 5 rue Thomas Mann, 75205 Paris,
	France  \& School of Mathematics, Shandong University, Jinan, PRC }
\email{ Sergei.Kuksin@imj-prg.fr}
\author{ Alberto Maiocchi }
\address{Alberto Maiocchi \\ Università degli Studi di Milano-Bicocca, Dipartimento di Matematica e Applicazioni - Edificio U5,
via Roberto Cozzi, 55, 20125 Milan, Italy} \email{alberto.maiocchi@unimib.it}
\author{Sergei  Vl\u adu\c t}
\address{Sergei  Vl\u adu\c t \\ Aix Marseille Universit\'e, CNRS, Centrale Marseille, I2M UMR 7373, 13453, Marseille,
France and IITP RAS, 19 B. Karetnyi, Moscow, Russia} \email{serge.vladuts@univ-amu.fr}
\begin{document}

\begin{abstract}
	
We consider the damped/driven cubic NLS equation on the torus of a large period $L$ with a small nonlinearity of size $\lambda$, 
 a properly scaled random forcing and dissipation.
We examine its solutions under the subsequent limit when first $\lambda\to 0$ and then $L\to \infty$.
The first limit, called the limit of discrete turbulence, is known to exist, and in this work we study the second limit $L\to\infty$ for solutions to the equations of discrete turbulence.
Namely, we decompose the solutions to formal series in  amplitude and study 
 the second order truncation of this series. We prove that the energy spectrum of the truncated solutions  becomes close to 
solutions of a damped/driven nonlinear wave kinetic equation. Kinetic  nonlinearity of the latter is similar to that 
which usually appears in works on wave turbulence, but is different from it (in particular, it is non-autonomous). 
 Apart from  tools  from analysis and stochastic analysis, our work uses two 
powerful results from the number theory. 
\end{abstract}

\date{}

\maketitle



\section{Introduction}

\subsection{The setting}

In this paper we continue the study of the Zakharov-L'vov stochastic model for wave turbulence (WT), initiated in \cite{DK1,DK2}; see also a survey \cite{DKsmall}. 
We start by recalling the classical and the Zakharov-L'vov stochastic settings of WT. See the introduction to \cite{DK1} for more detailed discussions of the two models.

\smallskip

{\it Classical setting.}
Let ${\mathbb{T}}^d_L={  \mathbb{R}   }^d/(L{  \mathbb{Z}   }^d)$ be the $d$-dimensional torus, $d\geq 2$, 
of  period $L\geq 2$. 
We denote by $\|u\|$ the normalized $L_2$-norm of a complex function $u$ on $\T_L^d$, 
$
\|u\|^2 =
L^{-d}\int_{{{\mathbb T}}^d_L} |u(x)|^2\,dx\,,
$
and  write the Fourier series of $u$ in the form
\begin{equation}\lbl{Fourier-def}
u(x)= L^{-d/2} \,{\sum}_{s\in{{\mathbb Z}}^d_L} v_s e^{2\pi  i s\cdot x}, 
\qquad\qu{{\mathbb Z}}^d_L = L^{-1} {{\mathbb Z}}^d\,.
\end{equation}
Here  the vector of  Fourier coefficients  $v= (v_s)_{s\in\Z^d_L}$ 
is given by the Fourier transform of $u(x)$,
$$
v=\cF(u), \quad v_s = L^{-d/2}\,\int_{\T^d_L} u(x) e^{-2\pi i s\cdot x}\,dx \quad \text{for}\; s\in \Z_L^d, 
$$
so the  Parseval identity takes form 
$
\|u\|^2 = L^{-d}\,{\sum}_{s\in\Z^d_L} |v_s|^2.
$
We will study  solutions $u(t,x)$ whose norms  satisfy  $\|u(t,\cdot)\|\sim 1$ as $L\to\infty$. This makes the chosen in
 \eqref{Fourier-def} scaling of Fourier series convenient for our purposes.

We consider  the cubic NLS equation with modified nonlinearity 
\be\label{NLS}
\frac{{{\partial}}}{{{\partial}} t}  u +i\Delta u- i\la \,\big( |u|^2 -2\|u\|^2\big)u=0 , 
\qquad x\in {\mathbb{T}}^d_L,
\ee
where $u=u(t,x)$,  $ \Delta=(2\pi)^{-2}\sum_{j=1}^d ({{\partial}}^2 / {{\partial}} x_j^2)$ and 
$\la\in(0,1]$ is a small parameter. 
The modification of the nonlinearity by the term $2i\la\|u\|^2u$  keeps the main features of the standard cubic NLS equation, reducing 
some non-crucial technicalities; see  the introduction to \cite{DK1}.

The objective of WT is to study solutions of \eqref{NLS}  under the limit $L\to\infty$ and $\la\to 0$ on long time intervals. There are plenty of physical works containing some different (but consistent) approaches
to the limit;  many references  may be found in \cite{ZLF92, Naz11, NR}.
 Despite the strong  interest in physical and  mathematical communities to the addressed 
questions, significant progress in 
the rigorous justification of the physical predictions was achieved only recently \cite{LS, FGH, BGHS18, Faou, DH, CG1, CG2, BGHS, DH1}.
See e.g. the introductions to \cite{DK1,CG1, DH1}  for discussions of  the obtained results. 

\smallskip

{\it Zakharov-L'vov setting.} When  studying  eq. \eqref{NLS}, members of the WT community  talk
about "pumping  energy to low modes and dissipating it in high modes".  
To make this rigorous, following Zakharov-L'vov \cite{ZL75}, in the present paper as well as in \cite{DK1,DK2}
we consider the NLS equation \eqref{NLS} dumped by a (hyper) viscosity and driven by a random force:
\begin{equation}\label{ku3s}
\frac{{{\partial}}}{{{\partial}} t}  u +i  \Delta u  - i\la\,\big( |u|^2 -2\|u\|^2\big)u =  -\nu\frak A(u)    + \sqrt\nu\frac{{{\partial}}}{{{\partial}} t} \eta^\omega(t,x).
\end{equation}
Here $\nu\in(0,1/2]$ is another  small parameter, which should be properly agreed with $\lambda$ and $L$. The 
 dissipative linear operator $\frak A$ is defined as 
 \be\lbl{diss_op}
\frak A(u(x)) = L^{-d/2}{\sum}_{s\in \Z^d_L} \ga_s v_s e^{2\pi is\cdot x},  \quad v= \cF(u),\;\;
\ga_s =\ga^0(|s|^2),
\ee
where $|s|$ stands for the Euclidean norm of a vector $s$ and $\gamma^0(y)$ is a  smooth real increasing function of $y>0$, 
satisfying\,\footnote{ For 
	example, if $\gamma_s =(1+|s|^2)^{r*}$, then $\frak A = (1-\Delta)^{r*}$. 
}
\be\label{gamma}
\gamma^0\ge1 \;\; \text{and}\;\; 
c(1+ y)^{r_*} \le \gamma^0(y)  \le  C(1+ y)^{r_*}\qquad
\forall\, y>0\,. 
\ee
The exponent $r_* >0$ and  $c, C$ are positive constants. 
Also we assume that 
	$$
	\text{ {\it  all derivatives of  $\ga^0$ have at most polynomial growths at infinity.}}
$$
The random noise  $\eta^\omega$  is given by a  Fourier series
$$
\eta^\omega(t,x)=
L^{-d/2} \, {\sum}_{s\in\Z^d_L} b(s) \beta^\omega_s(t) e^{2\pi  i s\cdot x},
$$
where
$\{\beta_s(t), s\in{{\mathbb Z}}^d_L\}$ are standard independent complex Wiener processes\,\footnote{
	i.e. $\beta_s = \beta_s^1 + i\beta_s^2$, where $\{\beta_s^j, s\in{{\mathbb Z}}^d_L, j=1,2  \}$ are
	standard independent real Wiener processes.} and  
	 $b(x)$
	  is a  Schwartz  function on ${{\mathbb R}}^d\supset \Z^d_L$. 
	  \footnote{Often it is assumed that  the intensity  $b(x)$ of the noise $\eta^\omega$ is non-negative, but we do not impose this condition. 
	  Note that if $b(x)\equiv 0$, then our results become trivial since below we will provide \eqref{ku3s} with the zero initial conditions.}

Solutions $u(\tau)$ of \eqref{ku3s} are random processes in
the space 
$
H=   L_2(\T^d_L, \C), 
$
equipped with the norm $ \|\cdot\| $.
If $r_*$ is sufficiently big, equation \eqref{ku3s} is known to be well posed. Moreover,  Ito's formula shows that  $\EE\|u(\tau)\|^2$ is bounded uniformly in $\tau$ and $L,\nu,\la$, once $\EE\|u(0)\|^2$ is bounded uniformly in these parameters, see in \cite{DK1}.

We will study the equation on 	time intervals of order  $\nu^{-1}$. So it is convenient to pass  from $t$ 
to the slow time $\tau=\nu t$ and write  eq.~\eqref{ku3s} as 
\begin{equation}\label{ku3}
\begin{split}
\dot u +i \nu^{-1} \Delta u  - i\rho\,\big( |u|^2 -2\|u\|^2\big)u &= -\frak A(u)    + \dot \eta^\omega(\tau,x),\\
\eta^\omega(\tau,x)&=
L^{-d/2} \, {\sum}_{s\in\Z^d_L} b(s) \beta^\omega_s(\tau) e^{2\pi  i s\cdot x}\,.
\end{split}
\end{equation}
Here $\rho=\la\nu^{-1}$, the upper-dot stands for $d/d\tau$  
and $\{\beta_s(\tau), \, s\in\Z^d_L\}$ is another set of standard independent complex Wiener processes.  
Below we use   $\rho$, $\nu$ and $L$ 	 as  parameters of the equation.

In the context of equation \eqref{ku3}, 
the objective of WT is to study its solutions $u(\tau)$ when
\be\label{lim}
L\to\infty \;\;\; \text{and}\;\; \; \nu\to 0,
\ee
while $\rho=\rho(\nu,L)$ is scaled appropriately, mostly paying attention to their  {\it energy spectra} 
\be\lbl{energy spectrum}
N_s(\tau): =  {{\mathbb E}} |v_s (\tau) |^2, \qmb{where}\qu 
v(\tau)=\cF(u(\tau)).
\ee  
Exact meaning of the limit \eqref{lim}   is unclear since no relation between the  parameters $\nu$ and
$L$ is postulated by the theory. 

Motivated by physical works,  in the present paper, as in \cite{DK1,DK2}, we study 
formal decompositions in $\rho$ of solutions to eq. \eqref{ku3} and of their  energy spectra $N_s$ under  the limit \eqref{lim}.
See the introduction to \cite{DK1} for a discussion of our motivation,
and see below Section~\ref{sec:quasisol}. 
In \cite{DK1,DK2} we understand  the limit \eqref{lim} as
\be\label{the_lim}
\text{
   first  $L\to\infty$ and then $\nu\to 0$, or  $L\gg\nu^{-2}$ while $\nu\to0$.  }
 \ee
There we have shown that    principal terms of 
the decomposition of $N_s$ in $\rho$  have  a non-trivial limiting   behaviour, provided that $\rho$ is scaled as $\rho \sim \nu^{-1/2}$, governed by  a 
nonlinear  {\it wave kinetic equation} (WKE) with added dissipation and a constant forcing. 
The WKE coincides with that,  arising in physical works, so this result agrees well  with the predictions of  the WT.

In the present paper we are interested in the opposite order of limits, which rarely appears in physical works:
\be\lbl{lim'}
\mbox{ firstly \quad $\nu\to 0$, \quad and then \quad  $L\to\infty$.}
\ee  
Roughly speaking, our main result is that under the double limit \eqref{lim'} the behaviour of  principal terms of the decomposition in $\rho$ for the energy spectrum $N_s$ is governed by a {\it modified} WKE.  The latter is similar to the WKE arising in \cite{DK1,DK2} and physical papers, but is different from them. The  scaling of $\rho$ now is $\rho\sim L\,\chi_d(L)$, where 
\be\lbl{chi_d}
\chi_d(L)\equiv 1 \qmb{if}\qu d\geq 3 \qmb{and}\qu \chi_d(L)=(\ln L)^{-1/2} \qmb{if}\qu d=2.
\ee
To the best of our 
knowledge this WKE did not appear in the literature before. 

For the proof  we start with  the result obtained in \cite{KM16,HKM}, where  the limiting as $\nu\to 0$ behaviour of equation \eqref{ku3}
is examined  (while $L$ and $\rho$ are kept  fixed). Then we pass to the limit as $L\to\infty$, 
 following  the approach   of \cite{DK1,DK2} and using the  developed there tools, such as a specific Feynman diagram presentation. Another  key ingredient of the proof is an obtained in \cite{number_theory} 
 refinement of the Heath-Brown circle method for quadratic forms  \cite{HB},  and certain upper bounds for the number of integer points 
 on intersections of quadrics.  In the next subsections we describe our results and methods in more detail.

In  \cite{KM15} a similar result concerning the iterated limit \eqref{lim'} 
 was found  heuristically, however there 
 $\rho$ was scaled as $\rho\sim\sqrt{L}$. The present paper shows that the correct scaling is different: $\rho\sim L{ \,\chi_d(L)}$.  

Similar regimes, when $L\to \infty$ slowly while $\nu>0$ fast decays to zero,  were studied in \cite{FGH,BGHS18}.
 However  the elegant description of the limit, obtained there, is
 far from the prediction of  WT. The works  \cite{FGH,BGHS18} should  rather  be regarded as a kind of averaging
  (similar to that of Krylov--Bogolyubov) 
 since the considered there  time scale is much shorter than the characteristic time scale of WT. Note that in \cite{BGHS18} a similar
  to ours \cite{number_theory} refinement of the  Heath-Brown method also is crucially used.

\subsection{The limit of discrete turbulence}

We first consider the limit 
\be\lbl{lim_dt} 
\nu\to 0 \qmb{while $L$ and $\rho$ stay fixed.}
\ee
It is known as
\textit{the limit of discrete turbulence} (see \cite[Section 10]{Naz11}) and has been
successfully studied in \cite{KM16,HKM}. To explain the result let  us take the Fourier transform of equation~\eqref{ku3}:
\be\label{ku33}
\begin{split}
	&  \dot v_s -i\nu^{-1}|s|^2 v_s + \gamma_s v_s
	=  i\rho  L^{-d} \Big( {\sum}_{1,2,3} \dep v_1 v_{2}
	\bar v_{3} -|v_s|^2v_s\Big)+b(s) \dot\beta_s 
\end{split}
\ee
for $s\in{{\mathbb Z}}^d_L$. Here, as it is common in WT, $v_j$ abbreviates $v_{s_j}$, 
$ {\sum}_{1,2,3} $ stands for  ${\sum}_{s_1,s_2,s_3\in\Z^d_L} $, and 
\be\lbl{def_del}
\dep={\delta'}^{s_1s_2}_{s_3s}: =
\left\{\begin{array}{ll}
	1,& \text{ if $s_1+s_2=s_3+s$ and $\{s_1, s_2\} \ne \{s_3, s\}$}
	\,,
	\\
	0,  & \text{otherwise}.
\end{array}\right.
\ee
 Note that 
\be\label{notethat}
\text{
if \; $\dess=1$, \;  then \; $\{s_1, s_2\} \cap \{s_3, s\} =\emptyset.$
}
\ee

We pass to the {\it interaction representation}, 
\be\lbl{interaction r}
\ai_s(\tau)=v_s(\tau) e^{-i\nu^{-1}\tau |s|^2}, \qu s\in\Z^d_L,
\ee
and denote
\be\label{omega}
\oms=\omega^{s_1s_2}_{s_3s}:= |s_1|^2+|s_2|^2-|s_3|^2 - |s|^2=  -2
(s_1-s)\cdot (s_2-s),
\ee
where the last equality holds if $\dep=1$ since then $s_3=s_1+s_2-s$.
Then equation \eqref{ku33} takes the form
\begin{equation}\label{ku44}
\begin{split}
\dot \ai_s + \gamma_s \ai_s
&=  i\rho Y_s(\ai,\nu^{-1}\tau) +b(s) \dot\beta_s,  \qquad s\in{{\mathbb Z}}^d_L,\\
Y_s(\ai, t )&=L^{-d} \Big({\sum}_{1,2,3} \dep \ai_{1}\ai_{2} \bar \ai_{3}
e^{  i t \oms} - |\ai_s|^2\ai_s\Big),
\end{split}
\end{equation}
where 
$\{\beta_s\}$ is yet   another set of standard independent complex Wiener processes (and, again $\ai_j$ stands for $\ai_{s_j}$). 
Note that the energy spectra  of solutions to equations \eqref{ku33} and \eqref{ku44} coincide: 
\be\lbl{i:es}
N_s(\tau)=\EE|v_s(\tau)|^2=\EE|\ai_s(\tau)|^2.
\ee
Sometimes we will write $N_s$ and $\ai_s$ as  $N_s(\tau;\nu,L)$ and $\ai_s(\tau;\nu,L)$. The limiting dynamics in equation \eqref{ku44} under the limit \eqref{lim_dt} 
is governed by  the \textit{effective equation} of discrete turbulence. The latter 
 has the  form  \eqref{ku44} with the modified nonlinearity  $Y^{res}$, in which  the sum is taken only over resonant vectors $s_1,s_2,s_3$:
\begin{equation}\label{ku5}
\begin{split}
\dot \ai_s+\gamma_s \ai_s &= i\rho Y_s^{res}(\ai)
+b(s) \dot\beta_s
\,,\quad s\in{{\mathbb Z}}^d_L  \,,\\
Y_s^{res}(\ai)&=L^{-d}\Big( {\sum}_{1,2,3} \dess \delta(\omega^{12}_{3s}) 
\ai_{1} \ai_{2} \bar \ai_{3} - |\ai_s|^2\ai_s\Big)\,.
\end{split}
\end{equation}
Here   $\delta(\omega^{12}_{3s})=1$ if $\omega^{12}_{3s} = 0$ and $\delta(\omega^{12}_{3s})=0$ otherwise.
The following result is proven in \cite{KM16, HKM}.
\begin{theorem}\label{t_discrturb}
	If $d\ge1$ and 
	$r_*\geq d/2 + 1$, 	 then equations \eqref{ku44} and 
	\eqref{ku5} are well posed. Under the limit \eqref{lim_dt}, on time intervals of order 1,  \\
	i) a solution  $\ai^\eps(\tau)$ of \eqref{ku44} converges in distribution to a solution  $\ai^0(\tau)$ of \eqref{ku5} with the same initial
	data at $\tau=0$;\\
	ii) the energy spectra $\EE |\ai_s^\eps(\tau)|^2$ converges to the energy spectra $\EE |\ai_s^0(\tau)|^2$.
\end{theorem}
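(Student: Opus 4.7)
The plan is to treat \eqref{ku44} as a slow--fast stochastic system in which $\tau$ is the slow time and $t=\nu^{-1}\tau$ is the fast one, and to apply a stochastic averaging principle à la Khasminskii. The essential observation is that, for fixed $L$, the nonlinearity $Y_s(\ai,\nu^{-1}\tau)$ contains only the explicit fast phase $e^{i\nu^{-1}\tau\oms}$: as $\nu\to 0$ this phase oscillates infinitely fast except on the resonant set $\{\oms=0\}$, and the contribution of the latter is exactly $Y_s^{res}$.

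The first step is well-posedness. Since $r_*\ge d/2+1$, the dissipation $-\ga_s\ai_s$ controls the cubic nonlinearity in a sufficiently regular Sobolev space on $\T^d_L$. I would run a Galerkin truncation (finite-dimensional SDEs on $\C^{|\La|}$ for $\La\subset\Z^d_L$ finite), establish global existence and uniform-in-$\La$ moment bounds by applying Itô's formula to $\|\ai\|^2$ and to weighted norms $\sum_s\lan s\ran^{2k}|\ai_s|^2$; the structure of the nonlinearity (symmetry in the resonance $\dess$, together with the subtraction $|\ai_s|^2\ai_s$) makes the cubic drift vanish in the $L^2$-balance, so that dissipation plus the Gaussian forcing yields the required a priori bounds. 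This gives unique strong solutions to both \eqref{ku44} and \eqref{ku5}; the same scheme works for \eqref{ku5} because restricting the sum to $\dess\delta(\oms)$ preserves the $L^2$-symmetry.

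The second step is convergence. Uniform moment bounds imply tightness of the laws of $\ai^\nu$ in $C([0,T];\ell^2_w)$ for a suitable weighted space, via a Kolmogorov/Aldous criterion applied to the increments controlled by the mild formulation. To identify the limit I would use the martingale problem: for a cylindrical test function $\phi$ depending on finitely many modes $s\in\La$,
\[
M_\phi^\nu(\tau)=\phi(\ai^\nu(\tau))-\phi(\ai^\nu(0))-\int_0^\tau \Lc_\nu(\sigma)\phi(\ai^\nu(\sigma))\,d\sigma
\]
is a martingale, with the time-inhomogeneous generator $\Lc_\nu(\sigma)$ whose drift contains $Y_s(\ai,\nu^{-1}\sigma)$. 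Splitting this drift into its resonant part (which is $\nu$-independent and agrees with the drift of \eqref{ku5}) and the non-resonant one, the key claim is
\[
\int_0^\tau e^{i\nu^{-1}\sigma\,\oms}F(\ai^\nu(\sigma))\,d\sigma \xrightarrow{\;\nu\to0\;}0 \qquad\text{whenever }\oms\neq 0,
\]
which follows by an integration-by-parts in $\sigma$, using Itô's formula on $F(\ai^\nu)$ and the a priori bounds, together with the fact that for fixed $L$ the non-zero values of $\oms$ are bounded away from $0$ on each truncation $\La$. Tightness and Skorokhod representation then let me pass to the limit and identify any accumulation point as a solution of the martingale problem of \eqref{ku5}, which is well posed by Step~1; this is assertion (i).

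The last step is (ii), the convergence $\EE|\ai_s^\nu(\tau)|^2\to\EE|\ai_s^0(\tau)|^2$. It suffices to promote the distributional convergence with uniform integrability of $|\ai_s^\nu(\tau)|^2$, and this follows from the uniform higher-moment bounds obtained in Step~1 ($\EE|\ai_s^\nu(\tau)|^4\le C$ uniformly in $\nu$). The part I expect to be most delicate is the non-resonant averaging in Step~3: the nonlinearity is a triple sum over $\Z^d_L$, and although $L$ is fixed, the tail modes $|s|\gg 1$ must be controlled uniformly in the truncation; the mechanism is the strong dissipation $\ga_s\gtrsim\lan s\ran^{2r_*}$ with $r_*\ge d/2+1$, which gives summable tail bounds that are independent of $\nu$ and allow one to reduce the averaging argument effectively to a finite number of modes, where the discreteness of $\{\oms\}$ makes the oscillatory cancellation transparent.
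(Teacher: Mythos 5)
The paper does not prove this theorem itself but quotes it from \cite{KM16,HKM}, and your plan --- uniform a priori bounds driven by the dissipation ($r_*\ge d/2+1$), tightness, a Khasminskii-type averaging step killing the non-resonant oscillatory terms $e^{i\nu^{-1}\tau\oms}$ (which works since non-zero $\oms$ satisfy $|\oms|\ge L^{-2}$ for fixed $L$), and identification of the limit with the well-posed effective equation \eqref{ku5} --- is essentially the resonant-averaging argument of those references. Your martingale-problem formulation is a routine variant of the mild-formulation identification used there, and deducing (ii) from convergence in law plus uniform fourth-moment bounds is likewise how the energy-spectrum statement is obtained.
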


If $b(x)>0$ for all $x$, then it is known that under the  assumptions of  Theorem~\ref{t_discrturb}, equations \eqref{ku44} and 
\eqref{ku5} are mixing, so they have unique stationary measures. Then in  \cite{KM16, HKM} it is also proven that the 
assertions of the theorem remain true if $\ai^\eps(\tau)$ and $\ai^0(\tau)$ denote stationary solutions of equations \eqref{ku44} and 
\eqref{ku5}.

\subsection{The main result }

In view of Theorem~\ref{t_discrturb}, to understand behaviour of the energy spectrum \eqref{i:es} of equation \eqref{ku44} under the limit \eqref{lim'}, it remains to study that 
 of the energy spectrum $\EE|\ai_s(\tau;L)|^2$ of the effective equation \eqref{ku5} under the limit $L\to \infty$.  
Instead, following the logic of \cite{DK1}, we study the energy spectrum corresponding to a principal part of decomposition in $\rho$ for the solutions $\ai_s(\tau;L)$ of 
equation \eqref{ku5}.  

\smallskip

{\it Quasisolutions and their energy spectra.}  To simplify presentation we 
assume that initially the system was  at rest, i.e. supplement equation \eqref{ku5} with the zero initial condition 
\be\label{in_cond}
\ai_s(0)=0 \qquad \forall s\in\Z^d_L.
\ee
We formally decompose the corresponding solution of \eqref{ku5}  in $\rho$,
\be\label{decompor}
\ai(\tau)=\ai^{(0)}(\tau)+\rho \ai^{(1)}(\tau)+ \rho^2 \ai^{(2)}(\tau) + \dots ,\quad\qquad \ai^{(k)}(0)=0,
\ee
$\ai^{(k)}(\tau)=\ai^{(k)}(\tau;L).$
The process $\ai^{(0)}(\tau)$
satisfies the linear equation  
$$
\dot \ai_s^{(0)} + \gamma_s \ai_s^{(0)}
= b(s) \dot\beta_s  \,,\quad s\in{{\mathbb Z}}^d_L\,,
$$
so it is Gaussian,
\be\label{a0}
\ai^{(0)}_s(\tau) = b(s) \int_{0}^\tau e^{-\gamma_s(\tau-l)}d\beta_s(l).
\ee
The process $\ai^{(1)}$ satisfies
$$
\dot \ai^{(1)}_s + \gamma_s \ai^{(1)}_s 
=  i Y_s^{res}(\ai^{(0)})\,,
$$
so that
\be\label{a1}
\ai^{(1)}_s(\tau) = iL^{-d}\!\! \int_{0}^\tau  e^{-\ga_s(\tau-l)}
\left(\sum_{1,2,3}\dess \delta(\omega^{12}_{3s}) (\ai^{(0)}_1
\ai^{(0)}_2{\bar \ai}^{(0)}_3) -|\ai_s^{(0)}|^2\ai_s^{(0)} \right)\!(l)dl\,
\ee
is a Wiener chaos of third order (see \cite{Jan}). 
Similar for $ n\ge1$, 
\be\label{an}
\begin{split}
	\ai^{(n)}_s&(\tau) 
	= iL^{-d}  \sum_{n_1+n_2+n_3=n-1}  \int_{0}^\tau  e^{-\ga_s(\tau-l)} \\ 
	& \times\!\!\,  \left( {\sum}_{1,2,3}
	 \dess \delta(\omega^{12}_{3s})\big(
	\ai_{1}^{(n_1)} \ai_{2}^{(n_2)} {\bar \ai_{3}}^{(n_3)}\big) - \ai_s^{(n_1)} \ai_s^{(n_2)} \bar\ai_s^{(n_3)} \right)(l)\,dl, 
\end{split}
\ee
is a Wiener chaos of order $2n+1$. 	

Next we consider the quadratic truncation of the series \eqref{decompor},
\be\lbl{A-intro}
\mathcal{A}_s(\tau; L)= 
\mathcal{A}_s(\tau) = \ai^{(0)}_s(\tau) +\rho \ai^{(1)}_s(\tau) +\rho^2 \ai^{(2)}_s(\tau)\,,
\ee 
which we call the {\it quasisolution}\,\footnote{by analogy with the  {\it quasimodes} in the spectral theory of the  Shr\"odinger operator.} 
 of the effective equation \eqref{ku5}, \eqref{in_cond}. 
It is traditional in  WT to analyse the quasisolution instead of the solution itself, postulating that the former well approximates the latter; see introduction to \cite{DK1} for a discussion.
The goal of the present paper is to study the behaviour of the energy spectrum of  $\mathcal{A}(\tau)$, 
\be\lbl{n_s-a}
\mathfrak{n}_{s,L}(\tau) = \EE |\cA_s(\tau;L)|^2,\quad s\in\Z_L^d, 
\ee  
as $L\to\infty$. 
Our results, formulated below, show that under this limit  
the energy spectrum $\mathfrak{n}_{s,L}(\tau)$ has a non-trivial behaviour (i.e.  stays finite and behaves differently from $\EE|\ai_s^{(0)}|^2$) only if  $\rho\sim L\,\chi_d(L)$, where $\chi_d$ is defined in \eqref{chi_d}.
 Accordingly, from now on we assume that 
\be\lbl{rho(L)}
\rho=\eps L \chi_d(L),
\ee
where $0<\eps\leq 1/2$ is a small but fixed constant (see a few lines below for its discussion). 
Then the energy spectrum $\no_{s,L}$ expands as 
\be\lbl{N_s_decomp}
\no_{s,L}(\tau)=\no_{s,L}^{(0)}(\tau) + \eps \, \no_{s,L}^{(1)}(\tau)+\eps^2 \no_{s,L}^{(2)}(\tau) +\eps^3 \no_{s,L}^{(3)}(\tau) +\eps^4 \no_{s,L}^{(4)}(\tau), 
\ee
$s\in\Z_L^d,$ 
where
\be\label{n_s^k}
\no_{s,L}^{(k)}(\tau) =  \big(L\chi_d(L)\big)^k \sum_{\substack{k_1+k_2=k \\ 0\leq k_1,k_2\leq 2}} \EE \ai^{(k_1)}_s(\tau) \bar \ai^{(k_2)}_s(\tau)\,. 
\ee
In particular, by \eqref{a0},
\be\label{n_s_0}
\no_{s,L}^{(0)} (\tau)= \EE |\ai_s^{(0)}(\tau)|^2
=\frac{b(s)^2}{\ga_s} \big( 1-e^{-2\ga_s\tau}\big),
\ee
and a simple computation shows that $\no_{s,L}^{(1)}(\tau)\equiv 0$.
For higher order terms we prove that 
\be\lbl{i:n est}
\no_{s,L}^{(2)}\sim 1\qnd |\no_{s,L}^{(3)}|,\,|\no_{s,L}^{(4)}|\lesssim 1 \quad\mbox{as }L\to\infty \mbox{ uniformly in }\tau\geq 0;
\ee
see a discussion in the next subsection. Thus, the parameter $\eps$ measures the 
properly scaled amplitude of the solutions, and   indeed it should be small for the methodology of WT to apply. 
Then, the term $\eps^2\no_{s,L}^{(2)}$ is the crucial non-trivial component of the energy spectrum $\no_{s,L}$ while the terms $\eps^3\no_{s,L}^{(3)},\; \eps^4\no_{s,L}^{(4)}$ are perturbative.  This well agrees with the prediction of physical works concerning various models of WT.

\smallskip

{\it Wave kinetic equation.} In view of \eqref{i:n est}, to study the limiting as $L\to\infty$ behaviour of the energy spectrum $\no_{s,L}(\tau)$ up to an error of size  $\eps^3$ it remains to investigate the behaviour of its principal component $\no^{(0)}_{s,L}(\tau)+\eps^2\no_{s,L}^{(2)}(\tau)$. 
We show that the latter is governed by a WKE. To state the result let us consider the {\it resonant quadric}
\be\lbl{i:quadr}
\Sigma_s=\{(s_1, s_2)\in\R^{2d}: \, (s_1-s)\cdot(s_2-s)=0\},
\ee
cf. \eqref{omega}, and a measure $\mu^{\Sigma_s}$ on it,  given by
\be\label{meas}
\mu^{\Sigma_s} (ds_1ds_2) =   {\big(|s_1-s|^2+|s_2-s|^2\big)}^{-1/2} ds_1 ds_2\!\mid_{\Sigma_s},
\ee
where $ds_1ds_2\!\!\mid_{\Sigma_s}$ denotes the volume element  on $\Sigma_s$,  corresponding to the standard  Euclidean 
structure on $\R^{2d}$.

Let us consider the following non-autonomous 
cubic {\it wave kinetic integral} operator $K(\tau)$, for any $\tau\geq 0$ sending a function $y_s$, $s\in\R^d$,  to the function $K_s(\tau)y$,
defined as 
\be\lbl{i:KI}
K_s (\tau)y
=4C_d\int_{\Sigma_s} \mu^{\Sigma_s}(ds_1 ds_2) 
\Bigl(\cZ^4 y_1y_2y_3 
+
\cZ^3y_1y_2 y_4- \cZ^2 y_1 y_3y_4
-  \cZ^1
y_2y_3y_4\Bigr).
\ee
Here $y_j:=y_{s_j}$ with  $s_4:=s$ and  $s_3:=s_1+s_2-s$, $C_d$ is the constant from  Theorem~B below,  the kernels 
 $\cZ^j=\cZ^j(\tau;s_1,s_2,s_3,s_4)$ are given by formulas \eqref{Z-edi}, \eqref{Z-ed}
and satisfy $0\leq \cZ^j(\tau)\leq 1$. When $\tau\to\infty$ the operator $K(\tau)$ exponentially fast converges to a limiting 
 kinetic integral operator $K(\infty)$,  
 	given by \eqref{i:KI} with $\cZ^j=(\ga_{s_1}+ \ga_{s_2}+\ga_{s_3}+\ga_{s_4})^{-1}$ for all $j$:
 \be\lbl{Kn_inf}
 K_s (\infty)y
 = 4C_d\int_{\Sigma_s} \frac{\mu^{\Sigma_s}(d s_1 ds_2) }{\ga_{s_1}+ \ga_{s_2}+\ga_{s_3}+\ga_{s_4}}
 \Bigl(y_1y_2y_3 
 +
 y_1y_2 y_4- y_1 y_3y_4
 -  
 y_2y_3y_4\Bigr).
 \ee
It is similar 
to the standard four-waves kinetic operator of WT (e.g. see in \cite{Naz11}), which has the form \eqref{i:KI} with $\cZ^j\equiv const$,
 but still is different from the latter 
 since  $K(\infty)$ depends on the spectrum $\{\gamma_s\}$ of the dissipation operator $\frak A$.\footnote{Earlier the kinetic operator $K(\infty)$ was 
 heuristically obtained in \cite{KM15}. 
 }

For $r\in\R$ we denote by $\cC_r(\R^d)$ a space of continuous complex  functions on $\R^d$ with finite norm 
\be\lbl{r-norm}
|f|_r=\sup_{z\in\R^d}|f(z)|\lan z\ran^r, \qmb{where}\qu \lan z\ran=\max(|z|,1).
\ee
In Section~\ref{sec:kin_eq}, following \cite{DK1}, 
 we show  that if $r>d$, then for any $\tau$ the operator $K(\tau)$ defines a continuous $3$-homogeneous mapping $K(\tau):\cC_r(\R^d)\mapsto \cC_{r+1}(\R^d)$,  and for any $y\in\cC_r(\R^d)$ the curve $\tau\mapsto K(\tau)(y)$ is H\"older continuous in  $\cC_r(\R^d)$.

Now consider the following damped/driven non-autonomous WKE 
\be\label{kin eq}
\dot \zz_s (\tau) = -2\ga_s\zz_s +\eps^2 K_s(\tau)(\zz) +2 b(s)^2, \qquad  \;\; \zz(0)=0,
\ee
where $\tau\ge0$ and $s\in\R^d$. In  Section~\ref{sec:kin_eq} we prove that for small $\eps$ 
it has a unique solution $\zz_s(\tau)$, which  can be written as 
$\zz_s(\tau)=\zz_s^0(\tau)+\eps^2\zz_s^1(\tau,\eps)$, 
where $\zz_s^0,\,\zz_s^1\sim 1$ and $\zz_s^0$  solves  the linear equation \eqref{kin eq}$|_{\eps=0}$. It is easy to see that $\zz_s^0$ equals 
 the component $\no_{s,L}^{(0)}$ of the energy spectrum $\no_{s,L}$, given by \eqref{n_s_0}, and we prove that $\zz_s^1$ is $\eps^4$-close to   $\no_{s,L}^{(2)}$   uniformly in $\tau$. Then, in view of \eqref{i:n est}, the energy spectrum $\no_{s,L}$ is $\eps^3$-close
  to the solution $\zz_s(\tau)$. 

Below we denote by $C^\#(s)$ various positive functions of $s$ which decay as $|s|\to\infty$ faster than any negative degree of $|s|$. These functions never depend on the parameters $L,\eps$ and $\tau$. By $C^\#(s;p)$ we denote functions $C^\#(s)$ depending on a parameter $p$.

\smallskip

\noindent {\bf Theorem A  (Main theorem)}.
{\it   Let $d\ge2$. Then the energy spectrum  $\no_{s,L}(\tau)$  of the quasisolution $\cA_s(\tau)$ of \eqref{ku5}, \eqref{in_cond} 
	satisfies the estimate $\no_{s,L}(\tau)\leq C^\#(s)$ and	is $\eps^3$-close to the solution $\zz_s(\tau)$ of WKE \eqref{kin eq}. 
	Namely,  under the scaling $\rho = \eps L \,\chi_d(L)$,  for 
	any $r$ there exists $\eps_r \in(0, 1/2]$  such that for $0<\eps \le \eps_r$ we have 
	\be\lbl{TC-est}
	| \no_{\cdot,L}(\tau) - \zz_\cdot(\tau)|_r \le C_r\eps^3\qquad \forall\, \tau \ge 0, 
	\ee
	if $L\ge \eps^{-2}$ for $d\geq 3$, and $L\ge e^{\eps^{-1}}$ for $d=2$.} 
	

\smallskip

See Theorem~\ref{t_k2}. Since the energy spectrum $\no_s$ is defined for $s\in\Z^d_L$ with finite $L$, then the
norm in \eqref{TC-est} is understood as $|f|_r=\sup_{z\in\Z^d_L}|f(z)| \lan z\ran^r$. 
\begin{remark}\lbl{rem:d=2}
	If $d=2$, the lower bound $L\ge e^{\eps^{-1}}$ can be relaxed in the following sense. In Appendix~\ref{app:d=2} we explain that
	 there is a bounded correction  $f(\tau,L)$ which can be written explicitly, such that
	\be\lbl{TC-est,d=2}
	\Big| \no_{\cdot,L}(\tau) - \zz_\cdot(\tau) - \frac{f(\tau,L)}{\ln L}\Big|_r \le C_r\eps^3\qquad \forall\, \tau \ge 0, 
	\ee
	if $L\ge \eps^{-6}$.
\end{remark}

In Lemma~\ref{l_k5} we show that for $\eps\ll 1$ equation~\eqref{kin eq}$\!{}\mid\!_{\tau=\infty}$ 
has a unique steady state $\zz^\eps \in\Cc_r(\R^d)$. It  is
close to the unique steady state $\zz^0_s:=b(s)^2/\ga_s$ of the linear equation  \eqref{kin
  eq}$|_{\eps=0}$ and is asymptotically stable. Jointly with Theorem~A
 this result implies the following  asymptotic in time behaviour of the  
energy spectrum $\no_{s,L}(\tau)$:
\be\label{time_ass}
| \no_{\cdot,L}(\tau) -\zz^\eps_\cdot|_r \le C_r(e^{-\tau} +\eps^3), \qquad \forall\, \tau\ge 0,
\ee
if $L\ge \eps^{-2}$  for $d\geq 3$, and $L\ge e^{\eps^{-1}}$ for $d=2$,  see  \eqref{k20}. \smallskip

  The cases $d\ge3$ and $d=2$ 
are similar, but should be treated  separately. To shorten the presentation we give a detailed proof of Theorem A only for  $d\geq 3$, when 
$$
	\chi_d=1 \qmb{and}\qu \rho=\eps L.
$$
The proof for $d=2$ can be obtained by a simple modification of the argument for $d\geq 3$.
 We sketch it  in Appendix~\ref{app:d=2}. So from now on,  except Section~\ref{sec:diagram} which gives  a brief account of the 
 method of Feynman diagrams from \cite{DK1, DK2},  we assume that $d\geq 3$.

\medskip
In paper \cite{DK1} we examine the 
 behaviour of the energy spectrum $\no_{s,L,\nu}(\tau)$ of a quasisolution to equation~\eqref{ku44} under the double limit \eqref{lim},
  when firstly $L\to\infty$ and then $\nu\to0$ (or $L\gg\nu^{-2}$ while $\nu\to0$), assuming $\rho=\eps\nu^{-1/2}$.\,\footnote{In \cite{DK1} the notation is slightly different: there we set $\rho=\eps^{1/2}\nu^{-1/2}$.}
We got there 
a similar result which states that $\no_{\cdot,L,\nu}(\tau)$ is $\eps^4$-close to a solution of the damped/driven 
 four-wave kinetic equation as in \cite[Section~6.9.1]{Naz11} 
 (in contrast with \eqref{i:KI}-\eqref{Kn_inf},  the kinetic nonlinearity there does not depend on the 
dissipation   in equation \eqref{ku3s}). 
\medskip

\noindent {\it What next?} In this work and in \cite{DK1} we obtained wave kinetic limits for the energy spectra of quasisolutions for the NLS
equation \eqref{ku3} under limit  \eqref{lim'} with the scaling 
$\rho = \eps L \, \chi_d(L)$ and limit \eqref{the_lim} with the scaling $\rho = \eps\nu^{-1/2}$. Our next goal is to show that an exact solution $\ai_{\cdot} (\tau)$ of 
  eq.~\eqref{ku44} is $\eps^3$-close to its quasisolution $\cA_{\cdot}(\tau)$ (uniformly in 
$L\ge2$ and $\tau\in[0,T]$, for any $T>0$). And that
a solution of eq.~\eqref{ku3}
is $\eps^3$-close to the quasisolution of the equation (uniformly in $\nu$, $L$ and $\tau\in[0,T]$, if $L\ge \nu^{-2-\bar\ga}$, 
$\bar\ga>0$). 
This would imply that the energy spectra of solutions of eq.~\eqref{ku3} under limit 
\eqref{lim'} and limit  \eqref{the_lim} are $\eps^3$-close to solutions of the two WKE (namely, 
 eq.~\eqref{kin eq} and the WKE from \cite{DK1}). To prove this, say, 
for a solution $\ai_{\cdot} (\tau)$  of eq.~\eqref{ku44} we consider the equation on any fixed time-interval $[0,T]$ and regard it as a nonlinear 
equation $F_T(\ai_{\cdot} (\cdot) ) =0$. Then the quasisolution $\cA$ satisfies the equation with a disparity $\lesssim\eps^3$. 
By analogy with some stochastic problems for nonlinear PDEs, recently successfully resolved by the KAM-techniques (e.g. see \cite{Kuk}), 
 we believe that KAM
also applies to the equation $F_T=0$. Its application would imply that $\ai$ is $\eps^3$-close to $\cA$, as stated. We also believe that 
analysis of the KAM-iterations which build $\ai$ from $\cA$ will show that the energy spectrum of the solution $\ai_{\cdot} (\tau)$ 
 of eq.~\eqref{ku44} under the limit $L\to\infty$ converges to a solution of the WKE \eqref{kin eq}. A similar logic should apply to the energy 
 spectra of solutions for eq.~\eqref{ku3} under the limit \eqref{the_lim}.

\subsection{Outline of the proof: Feynman diagrams and number theory}

	It is well understood that to write down formulas for the terms $\no_{s,L}^{(k)}$ of decompositions as \eqref{N_s_decomp} it is instrumental to use the language of Feynman diagrams. In application to similar problems this 
 goes back at least to  the  works \cite{EY00,ESY07},  and then 
 was successfully used for the purposes of WT   in \cite{LS, DH, CG1, CG2, BGHS, DH1} and other papers. 
We use this  techniques  in the form developed in \cite{DK2} which gives  a 
 convenient presentation of the terms $\no_{s,L}^{(k)}$ (see \eqref{n_s^k}). 
 Namely, by iterating the Duhamel formula \eqref{an} we express $\ai^{(n)}(\tau)$ in
 terms of the Gaussian processes $\ai_s^{(0)}$, and next evoking the Wick formula for moments of  $\ai_s^{(0)}$ write the terms 
 $\no_{s,L}^{(k)}$ as multiple sums. Then the just mentioned diagram  techniques  allows to `integrate' these sums. That is,  to write any
   $\no_{s,L}^{(k)}$ 
 as a  sum over an intersections of $k-1$ \ quadrics in $(\Z_L^d)^k$ in a form,   convenient to pass to a limit as $L\to\infty$. 
 The term  $\no_{s,L}^{(2)}$  is a sum over a single 
  quadric and may be analised without the diagram's machinery. This and  some other similar terms play 
  a leading role in our analysis and dictate the form of the  limiting WKE. The terms may be written as  sums 
 \be\label{the_form}
 G_s(\tau, L) = L^{2(1-d)} \sum_{\substack{z_1,z_2 \in\Z_L^d:
  \\ z_1\cdot z_2=0, \, z_1,z_2\ne 0}} \Phi_s(\tau; z_1, z_2), 
 \ee
 well known in works on WT. To study them  under the limit $L\to\infty$ we make use of the celebrated circle method of Heath-Brown \cite{HB}.
 Since the result of \cite{HB} does not completely fit our  purposes, we specified it in the accompanying paper \cite{number_theory} (also see
 \cite[Section~5]{BGHS18} for another specification of the Heath-Brown method, used for the purposes of WT). This implies

\smallskip

\noindent {\bf Theorem B.} {\it  For any $L\ge2$, 
	\be\non
	\Big|
	G_{s}(\tau, L) - C_d\int_{\Sigma_0} \Phi_s(\tau;z_1,z_2)\,\mu^{\Sigma_0}(dz_1dz_2)\Big| \le 
	K_d \, \frac{\| \Phi_s(\tau; \cdot)\|_{N_1, N_2} }{L^{d-5/2}}\,,
	\ee
	where $\Sigma_0=\{z_1,z_2\in\R^d:\, z_1\cdot z_2=0\}$, 
	$\mu^{\Sigma_0}$ is the measure on it, defined by \eqref{meas} with $s=0$, 
	  $C_d$ is a number-theoretical constant, satisfying $C_d\in(1, 1+2^{2-d})$, the norm $ \|  \cdot \|_{N_1, N_2}$ is defined in \eqref{the_norm} and the 
	  constants $N_1, N_2 \in \N$ depend only on $d$. 
}
\smallskip

In particular, the term $\no_{s,L}^{(2)}(\tau)$ admits a limit  when $L\to\infty$. 

The terms $\no_{s,L}^{(3)}$ and $\no_{s,L}^{(4)}$ in \eqref{N_s_decomp} correspond to multiple intersections of quadrics, 
and the Heath-Brown method 
does not apply to them. Still the diagram technique  allows to write the terms in a convenient compact form. Then
 next in Section~\ref{sec:numbertheory}
and Appendix~\ref{app:intersection} we use Theorem~B jointly with another  powerful result from
 the number theory --  Bezout's theorem for finite fields -- to prove\,\footnote{ In fact, in Section \ref{sec:numbertheory} 
we prove some  abstract results, more general  than the theorem below; see there  Theorems~\ref{t:numbertheory} and \ref{t:countingterms}.}

\smallskip

\noindent {\bf Theorem C.} {\it For $k=3,4$,
	$\ 
	|\no_{s,L}^{(k)}(\tau)|\leq C^\#(s).
	$}
\medskip

 Theorems B and C imply \eqref{i:n est}. 
So to establish Theorem~A it remains to show that the term $\no_{s,L}^{\leq 2}(\tau):=\no_{s,L}^{(0)}(\tau)+\eps^2\no_{s,L}^{(2)}(\tau)$ (or equivalently its limit as $L\to\infty$,  provided by Theorem~B) can be well approximated by a solution of the WKE \eqref{kin eq}.
To this end, following the  lines of \cite{DK1} (and  the logic of the Krylov--Bogolyubov averaging) 
 we consider increments $\Delta\no_{s,L}^{\leq 2}:=\no^{\leq 2}_{s,L}(\tau+\theta)-\no^{\leq 2}_{s,L}(\tau)$ and
  express them through the processes $\ai^{(0)}_m$ via the Duhamel formula \eqref{an} and  the Wick theorem. Then the 
  increments approximately   take the form \eqref{the_form},  and we use 
 Theorem~B  to show that they are close to the r.h.s. of the WKE, multiplied by $\theta$.

Although the computation of the increments $\Delta\no_{s,L}^{\leq 2}$ is similar to that in \cite{DK1}, a mechanism leading to a
 WKE is rather different. Namely, in \cite{DK1} components of the   terms  $\no_{s,L}^{(k)}$ 
 are approximated by  formulas analogous to \eqref{the_form}, where 
 the summation over the lattice $(\Z^d)^k =\{z\}$ is replaced by an integration over $\R^{dk}$.
 The  integrals in those formulae 
  involve fast oscillating Gaussian kernels. The zero sets of these kernels define quadrics, related to the quadrics $\{(z_1, z_2): z_1 \cdot z_2=0\}$ in \eqref{the_form}. 
   Due to the fast oscillations a crucial component of the increments $\Delta\no_{s,L}^{\leq 2}$ is given by the terms,
  associated with short-range correlations in $\tau$ of the processes $\ai_m^{(0)}(\tau)$. On the contrary, in the present situation a crucial contribution 
  is given by other terms, associated with long-time correlations of the processes $\ai_m^{(0)}(\tau)$, while the short-range correlations only give a
  small correction.   As the result, the kinetic integral in the WKE \eqref{kin eq} depends on the viscosity operator $\frak A$, while that in 
  the WKE in \cite{DK1} does not.
  \smallskip

   Finally we note that, 
    as we explain in Section \ref{s:ext_countingterms}, it is plausible that Theorem~C holds for all $k\ge3$. If so, then for $\rho=\eps L$
   the energy spectrum of a solution $\ai(\tau)$, written as \eqref{decompor}, defines 	 a formal series in $\eps$, uniformly in $L\ge2$. Then
   the partial sums of this series, made by the terms of order $\eps^m$, $m\le M$,   with any fixed 
   $M\ge2$, also satisfy Theorem~A with the     constants $C_r$,    depending on $M$.  Cf. Conjecture~\ref{con_3.6}.

 \section{Series expansion: approximating equation and diagrammatic
   representation for solutions}\lbl{sec:diagram}

In this section, assuming that $d\ge2$,  we approximate  processes  
  \eqref{an} by more convenient processes $a^{(n)}$, and then obtain a compact and 
 instrumental representation for their correlations in terms
 of Feynman diagrams (see Lemma~\ref{l:E-fin}), following \cite[Sections~3-5]{DK2}.  This representation (as well as its analogy
 in \cite{DK1, DK2}) is used to estimate various disparity terms, related to quasisolutions $\cA(\tau;L)$, see \eqref{A-intro}, 
 and to their energy spectra.

 Our presentation is sketchy, but missing details may  be found in \cite{DK2}. For a general discussion of the language of  Feynman diagrams see
 \cite{Jan}.

 
\subsection{Approximate $a$-equation}

 We start by considering an approximation of the original equation
\eqref{ku5} by an equation, where  the term $L^{-d}|\ai_s|^2\ai_s$ is
removed:
\begin{equation}\label{ku4}
  \begin{split}
  \dot a_s+\gamma_s a_s = i\rho \cY_s(a)
  +b(s) \dot\beta_s
  \,,\quad s\in{{\mathbb Z}}^d_L  \,,\\
  \cY_s(a)=L^{-d} \sum_{1,2,3} \dess \delta(\omega^{12}_{3s}) 
  a_1 a_2 \bar a_3\,.
  \end{split}
\end{equation}
Similar to  processes $\ai_s$ we decompose
\be\label{decomp}
	a=a^{(0)}+\rho a^{(1)}+  \dots \,.
\ee
Here $a^{(0)} = \ai^{(0)}$ and the processes $a^{(n)}_s(\tau)$ with $n\ge1$ are built by the recursive formula \eqref{an} with the term 
$\ai_s^{n_1} \ai_s^{n_2} \bar\ai_s^{n_3}$  being dropped. I.e. with the nonlinearity $Y_s^{res}$ replaced by the $\cY_s$ above.

Results of \cite{DK2} together with Theorem~\ref{t:countingterms} below (which is an abstract version of Theorem~C from the introduction) imply 
\begin{proposition}\label{p:approx}
  For all $m,n\ge 0$, satisfying $N:=m+n\leq 4$, 
\begin{equation}\lbl{eq:comp_moment}
\big| \EE a^{(m)}_s(\tau_1)\bar a^{(n)}_s(\tau_2) - \EE
 \ai_s^{(m)}(\tau_1)\bar \ai_s^{(n)}(\tau_2) \big|
\le
\frac{L^{-N-d+1}}{\chi_d(L)^{N-1}} C^\#(s;n,m)\,,
\end{equation}
uniformly in $\tau_1,\,\tau_2\ge0$.
\end{proposition}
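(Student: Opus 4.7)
The plan is to prove \eqref{eq:comp_moment} by induction on $N = m+n$, exploiting the fact that the Duhamel recursion \eqref{an} defining $\ai^{(n)}$ and the corresponding recursion for $a^{(n)}$ (obtained by dropping the self-interaction $\ai_s^{(n_1)} \ai_s^{(n_2)} \bar\ai_s^{(n_3)}$) differ only by this single diagonal contribution at each level. The base case $N = 0$ is trivial since $a^{(0)} = \ai^{(0)}$. For the induction step, writing
\[
\ai^{(m)}\bar\ai^{(n)} - a^{(m)}\bar a^{(n)} = (\ai^{(m)}-a^{(m)})\bar\ai^{(n)} + a^{(m)}(\bar\ai^{(n)} - \bar a^{(n)})
\]
and unfolding each difference $\ai^{(k)} - a^{(k)}$ via its recursion, I would express the left-hand side of \eqref{eq:comp_moment} as a finite sum of moments, in each of which at least one internal trivalent vertex is occupied by the diagonal self-interaction rather than by the off-diagonal resonant sum.

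Next I would take expectations. Because $\ai^{(k)}$ and $a^{(k)}$ lie in the Wiener chaos of degree $2k+1$ generated by the Gaussian field $\{\ai_{s'}^{(0)}(\tau')\}$, Wick's theorem converts these moments into finite sums over Feynman diagrams of the type developed in \cite[Sections~3-5]{DK2}. In a generic (off-diagonal) diagram each internal vertex carries a prefactor $L^{-d}$ together with a sum over a resonant quadric whose asymptotic size is controlled by Theorem~B; under the scaling $\rho = \eps L\chi_d(L)$ these arrange themselves so that $\EE a^{(m)}\bar a^{(n)}$ has the expected order. A diagonal vertex, by contrast, forces $s_1' = s_2' = s_3' = s'$, eliminates the quadric sum entirely, and therefore saves a factor $L^{-d}$ relative to a standard vertex (with an additional $\chi_d(L)^{-1}$ reflecting the logarithmic loss in the Heath--Brown asymptotic when $d=2$). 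Combined with the Schwartz decay of $b$ and the bounds \eqref{gamma}, this furnishes the rapidly decreasing factor $C^\#(s;n,m)$.

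To handle diagrams with several diagonal vertices, and to bound the remaining multiple sums over intersections of resonant quadrics, I would invoke Theorem~\ref{t:countingterms}, whose abstract counting results constitute the principal new ingredient beyond \cite{DK2}. The hard part will be the combinatorial bookkeeping needed to keep track of the $L$-power produced by each Feynman diagram as a function of the numbers of diagonal and off-diagonal vertices and of the Wick pairing pattern; once this is organized uniformly as in \cite[Sections~3-5]{DK2}, the diagonal savings combine with the quadric counting to yield exactly the bound $L^{-N-d+1}\chi_d(L)^{-(N-1)} C^\#(s;n,m)$, uniformly in $\tau_1,\tau_2 \ge 0$.
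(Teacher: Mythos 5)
Your high-level plan coincides with the paper's: the difference between the $\ai$- and $a$-recursions is exactly the diagonal self-interaction, so every term of the telescoped difference carries at least one degenerate vertex, and after Wick expansion one estimates the resulting diagram sums with the counting results of Section~\ref{sec:numbertheory}. (The paper does not run an induction; it imports the exact diagrammatic identity \eqref{a---A_bound} and the raw bounds \eqref{app:J_2-est}, \eqref{app:J-est} from Proposition~8.7 and Section~8.6.3 of \cite{DK2}.) The problem is that your proposal stops precisely where the proof starts. The step you defer as ``combinatorial bookkeeping'' is the whole content: one must stratify the summation set according to the set $\cK$ of indices $j$ with $z_j=0$ and $(\al^\gF z)_j=0$, check that on each stratum the surviving quadratic forms again constitute a system of the type covered by Theorem~\ref{t:countingterms} in $N-\varkappa$ variables ($\varkappa=\#\cK\ge 1$), and treat the cases $N-\varkappa=0,1,2,3$ separately --- in particular, $N-\varkappa=1$ forces the stratum to be empty by the skew-symmetry of $\al^\gF$, and $N-\varkappa\in\{2,3\}$ is exactly why the hypothesis $N\le 4$ enters. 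None of this is in your plan, and without it the claimed exponent is not obtained.

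Your power counting is also off, which suggests the mechanism you have in mind is not the one that operates. You claim a diagonal vertex ``eliminates the quadric sum entirely, and therefore saves a factor $L^{-d}$.'' But the variable it freezes was not free: it was already confined to a quadric and hence ranged over only $O(L^{d-1})$ lattice values (this is the content of Lemma~\ref{l:intersection}), so freezing it to $0$ saves only $L^{-(d-1)}$ per degenerate direction. This is exactly the paper's computation $L^{-Nd}\cdot L^{(N-\varkappa)(d-1)}=L^{-N-\varkappa(d-1)}\le L^{-N-d+1}$, and it is why the exponent in \eqref{eq:comp_moment} is $-N-d+1$ rather than the $-N-d$ your accounting would predict. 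Since you also quote the correct target bound, your heuristic is internally inconsistent; reconciling it requires precisely the stratified counting argument that is missing.
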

We prove  the proposition in Appendix~\ref{app:approx}  for $d\ge 3$ and  discuss an adaptation of the proof to  the case
$d=2$ in Appendix~\ref{app:d=2}. Doing  that
we use the relation $N:=m+n\leq 4$ only to apply 
Theorem~\ref{t:countingterms} (or   Theorem~\ref{t:countingterms'} if $d=2$). 
 So if  the assertion \eqref{assertion} of  the latter theorem holds for larger $N$'s, 
  then for those $N$'s estimates \eqref{eq:comp_moment}  remains  true as well (we believe that \eqref{assertion} is fulfilled for all $N$, 
 see in Section~\ref{s:ext_countingterms}).
 
Relations \eqref{eq:comp_moment} imply that moments of  processes $a^{(m)}_s(\tau)$ well approximate those of  processes 
$\ai^{(m)}_s(\tau)$ as $L\to\infty$. Accordingly, from now on we will mostly study  processes $a_s(\tau)$ and their decompositions \eqref{decomp}.

\subsection{Diagrams for solutions}

  For what follows it is convenient to re-write   operator $\cY$ from 
   \eqref{ku4}, using  a fictitious index $s_4$:
 $$
 \cY_s(a)=L^{-d} \sum_{1,2,3,4} \de'^{12}_{34}\,
 \de(\om^{12}_{34})\de^s_{4} \,a_1 a_2 \bar a_3\,,
 $$   
 where $\de_{4}^s$ is the Kronecker symbol.
 Then analogous of the expression \eqref{an} for $a^{(m)}$, $m\geq 1$,  takes the form 
 \be\label{an2}
 \begin{split}
 	a^{(m)}_s(\tau) 
 	= &\sum_{m_1+m_2+m_3=m-1} i \int_{0}^\tau   dl \,e^{-\ga_{s}(\tau-l)} \\
 	& L^{-d}\sum_{1,2,3,4} 
 	 \de'^{12}_{34}\,\de(\om^{12}_{34}) \,
 	\de^s_{4}\big( a_1^{(m_1)}
        a_2^{(m_2)} {\bar a_3}^{(m_3)}\big)(l)\,.
 \end{split}
 \ee
 We will call the objects as those 
  in the r.h.s. of \eqref{an2} {\it sums}, despite they involve
  integrating in $dl$. {The r.h.s.  of \eqref{an2} contains
    several sums, corresponding to all admissible  choices of numbers $m_1,m_2,m_3$.

We apply  Duhamel's formula \eqref{an2} to the terms $a_{s_i}^{(m_i)}(l)$ in the right-hand side of \eqref{an2} 
 with $m_i>0$,  and iterate the procedure till $a_s^{(m)}(\tau)$ is expressed 
  through the processes $a^{(0)}$ and $\bar a^{(0)}$. 
    Then $a_s^{(m)}$ becomes represented  as a finite sum of {\it
      sums}; we denote such sums by $I_s$. Below we will  associate with each sum $I_s$  an appropriately constructed diagram $\gD$. 
  Thus  we will write $a_s^{(m)}(\tau)$ as 
\be
\lbl{a^m=}
a_s^{(m)}(\tau)=\sum_{\gD\in\gD_m} I_s(\gD;\tau),
\ee
where $\gD_m$ is a set of all diagrams, corresponding to  the just explained 
representation of  $a^{(m)}$ via the processes  $a^{(0)}$ and $\bar a^{(0)}$.
Similarly by $\ov\gD_n$ we denote the set of diagrams, 
parametrizing the terms in the sum, representing  $\bar a^{(n)}(\tau)$ in a form, analogous to \eqref{a^m=}: 
$\bar a_s^{(n)}(\tau)=\sum_{\bar \gD\in\ov\gD_m} I_s(\bar \gD;\tau).$ 

\subsubsection{Construction of the sets of diagrams $\gD_m$ and $\ov\gD_n$.}

 We start with discussing the set  $\gD_2$ and the sums $I_s(\gD)$ with $\gD\in\gD_2$.

When iterating the Duhamel formula \eqref{an2} (or its complex conjugation) for a $j$-th time, we will denote the corresponding  time 
 $l\in [0, \tau] $ by $l_j$ and  will write the set of indices 
 $\{s_1,s_2,s_3,s_4\}$ as $\{\xi_{2j-1},\xi_{2j},\s_{2j-1},\s_{2j}\}$, 
 where we enumerate by  $\xi_i$ the indices of   non-conjugated variables $a_{s'}^{(k)}$ in \eqref{an2} and  by  $\s_i$~-- those 
 of   conjugated variables $\bar a_{s''}^{(n)}$. 
 We  write the corresponding fictitious index 
  $s_4$  as $\s_{2j}$ if we  apply  \eqref{an2}, or as $\xi_{2j}$ if we  apply the complex conjugation of \eqref{an2}. 
 More precisely, when applying \eqref{an2} we denote $s_1=\xi_{2j-1}$, $s_2=\xi_{2j}$, $s_3=\s_{2j-1}$ and $s_4=\s_{2j}$, 
 and when applying its complex conjugation, we write  $s_1=\s_{2j-1}$, $s_2=\s_{2j}$, $s_3=\xi_{2j-1}$ and 
$s_4=\xi_{2j}$.
 We will abbreviate 
\be\lbl{de,om}
\de_j=\de'^{\xi_{2j-1} \xi_{2j}}_{\s_{2j-1}\s_{2j}} \qnd 
\om_j=\om^{\xi_{2j-1}\xi_{2j}}_{\s_{2j-1}\s_{2j}}, \qu j\geq 1,
\ee
(these terms correspond to   $\de'^{12}_{34}$ and $\om^{12}_{34}$ in  \eqref{an2}). We also set
\be\lbl{xi,s_0}
\xi_0=\s_0:=s.
\ee
Applying \eqref{an2} to $a_s^{(2)}$ and 
using the notation above with $j=1$ we find
\be\lbl{a^2=}
\begin{split}
	a^{(2)}_{s}(\tau)=a^{(2)}_{\xi_0}(\tau) 
	&= \sum_{m_1+m_2+m_3=1} i \int_{0}^\tau   dl_1 \,e^{-\ga_{\xi_0}(\tau-l_1)} \\
	&L^{-d}\sum_{\xi_1,\xi_2,\s_1,\s_2} 
	\de_1\,\de(\om_1) \,
	\de^{\xi_0}_{\s_2}\big( a_{\xi_1}^{(m_1)}
	a_{\xi_2}^{(m_2)} {\bar a_{\s_1}}^{(m_3)}\big)(l_1)\,.
\end{split}
\ee
Let us consider the summand with $m_1=m_2=0$ and $m_3=1$.
Applying the conjugated formula \eqref{an2} to $\bar a_{\s_1}^{(1)}$ 
and using the introduced  notation   with $j=2$ we get
\be\lbl{a^1=}
\begin{split}
	\bar a^{(1)}_{\s_1}(l_1) 
	= &-i \int_{0}^{l_1}   dl_2 \,e^{-\ga_{\s_1}(l_1-l_2)} \\
	& L^{-d}\sum_{\xi_3,\xi_4,\s_3,\s_4}
	\de_2\,\de(\om_2) \,
	\de^{\s_1}_{\xi_4}\big( a_{\xi_3}^{(0)}
	\bar a_{\s_3}^{(0)} {\bar a_{\s_4}}^{(0)}\big)(l_2)\,.
\end{split}
\ee
Inserting \eqref{a^1=} into the summand in \eqref{a^2=} with  $m_1=m_2=0$ and $m_3=1$, we get a sum 
$I_s(\gD;\tau)$ which we associate with the diagram $\gD$ from fig.~\ref{f:D(2)}(c); further on we will denote this diagram by $\gD^c$.
\begin{figure}[t]
	a)\qu
	\parbox{3cm}{ 
		\begin{tikzpicture}[]
		\node at (-1,0) (c0) {$c_0^{(2)}$};
		
		\node at (-2.1, -1.2) (c1) {$c_1^{(1)}$};
		\node at (-1.4, -1.2)  {$c_2^{(0)}$};
		\node at (-0.7, -1.2)  {$\bar c_1^{(0)}$};
		\node at (0, -1.3) (w2)  {$\bar w_{2}$};  
		
		\node at (-2.1, -2.5)  {$c_3^{(0)}$};
		\node at (-1.4, -2.5)  {$c_4^{(0)}$};
		\node at (-0.7, -2.5)  {$\bar c_3^{(0)}$};
		\node at (0, -2.6) (w4) {$\bar w_{4}$}; 
		
		\draw [line width=0.25mm] (c0.south)--(w2.north);
		\draw [line width=0.25mm](c1.south)--(w4.north);
		
		\end{tikzpicture}
	}
	\hfill
	b) \qu\parbox{3cm}{
		\begin{tikzpicture}[]
		\node at (-1,0) (c0) {$c_0^{(2)}$};
		
		\node at (-2.1, -1.2)  {$c_1^{(0)}$};
		\node at (-1.4, -1.2) (c2) {$c_2^{(1)}$};
		\node at (-0.7, -1.2)  {$\bar c_1^{(0)}$};
		\node at (0, -1.3) (w2)  {$\bar w_{2}$};  
		
		\node at (-2.1, -2.5)  {$c_3^{(0)}$};
		\node at (-1.4, -2.5)  {$c_4^{(0)}$};
		\node at (-0.7, -2.5)  {$\bar c_3^{(0)}$};
		\node at (0, -2.6) (w4) {$\bar w_{4}$}; 
		
		\draw [line width=0.25mm] (c0.south)--(w2.north);
		\draw [line width=0.25mm](c2.south)--(w4.north);
		
		\end{tikzpicture}
	}
	\hfill
	c) \qu \parbox{3cm}{
		\begin{tikzpicture}[]
		\node at (-1,0) (c0) {$c_0^{(2)}$};
		
		\node at (-2.1, -1.2)  {$c_1^{(0)}$};
		\node at (-1.4, -1.2)  {$c_2^{(0)}$};
		\node at (-0.7, -1.2) (bc1) {$\bar c_1^{(1)}$};
		\node at (0, -1.3) (w2)  {$\bar w_{2}$};  
		
		\node at (-2.1, -2.5)  {$c_3^{(0)}$};
		\node at (-1.4, -2.6) (w4) {$w_4$};
		\node at (-0.7, -2.5)  {$\bar c_3^{(0)}$};
		\node at (0, -2.5)  {$\bar c_{4}^{(0)}$}; 
		
		\draw [line width=0.25mm] (c0.south)--(w2.north);
		\draw [line width=0.25mm](bc1)--(w4.north);
		
		\end{tikzpicture}
	}
	\caption{The set of diagrams $\gD_2.$}
	\lbl{f:D(2)}
\end{figure}
The  {\it non-conjugated} vertices $c_i^{(k)}$ of the diagram are associated with the variables $a_{\xi_i}^{(k)}$ in 
\eqref{a^2=}, \eqref{a^1=};  the corresponding to them indices are $\xi_i$. The {\it conjugated} vertices $\bar c_j^{(n)}$  are associated with the variables $\bar a_{\s_j}^{(n)}$ and the corresponding indices are $\s_j$. In particular, the root $c_0^{(2)}$ is associated with 
$a_{\xi_0}^{(2)}=a_{s}^{(2)}$ and the corresponding index is $\xi_0$. 
In the notation $c_i^{(k)}$ and $\bar c_j^{(n)}$ we sometimes 
omit the upper indices $k$ and $n$ which we call the {\it degrees} of the vertices $c_i^{(k)}$, $\bar c_j^{(n)}$.
The vertices $\bar w_{2}$ and $w_{4}$ are called conjugated (non-conjugated) {\it  virtual} vertices and the corresponding indices are $\s_2$ and $\xi_4$; these vertices
 are associated with the Kronecker symbols $\de_{\s_2}^{\xi_0}$ and $\de_{\xi_4}^{\s_1}$ in \eqref{a^2=} and \eqref{a^1=}.
Vertices which are not virtual are called {\it real.} 
Every edge of the diagram couples a non-conjugated (conjugated) vertex $c_i^{(k)}$ ($\bar c_i^{(k)}$) of positive degree $k\geq 1$ with a conjugated (non-conjugated) virtual vertex $\bar w_{i'}$ ($ w_{i'}$). It is associated with an 
application of formula \eqref{an2}  (or its complex conjugation)
 to the variable $a_{\xi_i}^{(k)}$ (or $\bar a_{\s_i}^{(k)}$), corresponding to the vertex $c_i^{(k)}$ (or $\bar c_i^{(k)}$).

The set of four vertices 
\be\lbl{block}
c_{2j-1}, c_{2j}, \bar c_{2j-1}, \bar w_{2j} \qmb{or} \qu c_{2j-1}, w_{2j}, \bar c_{2j-1}, \bar c_{2j}
\ee 
(in dependence whether the virtual vertex is conjugated or not) to which correspond the indices $\xi_{2j-1},\xi_{2j},\s_{2j-1},\s_{2j}$
is called  {\it the   $j$-th block\,}; the diagram $\gD^c$ has two blocks. The index $i$ of a virtual vertex $w_i$ ($\bar w_i$) is always pair, $i=2j$.  
 Each block corresponds to an application of formula \eqref{an2} (or its complex conjugation) to its {\it parent}, i.e. to the vertex of positive degree coupled with the virtual vertex of the block. 
The virtual vertex is conjugated if the parent is non-conjugated and the other way round.
The {\it time variable} $l_j$ is associated with the $j$-th block.

The  {\it leaves} are the vertices of zero degree, that is, the vertices
$c_i^{(0)}$ and $\bar c_j^{(0)}$.

The diagrams from fig.~\ref{f:D(2)}(a,b) correspond to the summands in \eqref{a^2=} with 
$m_1=1, \, m_2=m_3=0$ and $m_1=m_3=0, \, m_2=1$; they are constructed by the same rules as the diagram $\gD^c$. The three diagrams from fig.~\ref{f:D(2)} form the set $\gD_2$. The set of diagrams $\ov\gD_2$, corresponding to $\bar a_{s}^{(2)}(\tau)$, is obtained by conjugating the vertices in the
three diagrams above and re-ordering the elements of each block in such a way that the pair of non-conjugated vertices is followed by the pair of conjugated vertices, i.e. the blocks have the form \eqref{block}. 

The sets $\gD_m$ and $\ov\gD_n$ with arbitrary $m,n \ge0$ and the diagrams which are their elements, are constructed similarly. Namely, the 
sets $\gD_0$ and $\ov\gD_0$ are trivial -- they contain one diagram each, made by the root $c_0^{(0)}$ (or $\bar c_0^{(0)}$).  The sets 
 $\gD_1$ and $\ov\gD_1$ also  contain only one diagram each; 
  e.g. the  diagram in  $\gD_1$ consists of the root $c_1^{(0)}\!$, joint by an edge with 
 $\bar w_2$ in the only block $B_1=( c_1^{(0)}, c_2^{(0)}, \bar c_1^{(0)}, \bar w_2)$.  
 Arbitrary sets  $\gD_m$ and $\ov\gD_n$ may be constructed
 by induction. Indeed, consider a process $a^{(m+1)}(\tau)$ with $m\ge 1$  and apply to it 
\eqref{an2} with $m:=m+1$. In the r.h.s. of \eqref{an2} the sum in $m_1, m_2, m_3$ contains $(m+2)(m+1)/2$ terms. Consider any one of them,
\be\label{term}
 i \int_{0}^\tau   dl \,e^{-\ga_{s}(\tau-l)} 
  L^{-d}\sum_{1,2,3,4} 
 \de'^{12}_{34}\,\de(\om^{12}_{34}) \,
 \de^s_{4}\big( a_1^{(m_1)}
 a_2^{(m_2)} {\bar a_3}^{(m_3)}\big)(l),
 \ee
 draw the block 
 $
 B_1=( c_1^{(m_1)}, c_2^{(m_2)}, \bar c_1^{(m_3)}, \bar w_2),
 $
 and join $\bar w_2$ by an edge with the root $c_0^{(m+1)}$. Next   consider the sets $\gD_{m_1}, \gD_{m_2}, \ov\gD_{m_3}$ and do the following:
 
 a) Firstly  take  $\gD_{m_1}$. If $m_1=0$,  do nothing. Otherwise  choose any diagram 
 $
 \gD^1 \in \gD_{m_1},
  $
   place it below $ c_1^{(m_1)}$ and identify its root with $ c_1^{(m_1)}$. Do this for each diagram in $\gD_{m_1}$, thus obtaining $|\gD_{m_1}|$ diagrams with
   roots in $c_0^{(m+1)}$. 
  
 b) Then  consider the set $\gD_{m_2}$ and do the same  with the just obtained $|\gD_{m_1}|$ diagrams, 
 identifying their roots with the vertex $c_{2}^{(m_2)}$, and next -- the set  $ \ov\gD_{m_3}$,
  identifying the roots with $\bar c_{3}^{(m_3)}$. 
 
 c) It remains to convert thus obtained  $|\gD_{m_1}|\times |\gD_{m_2}|\times |\gD_{m_3}|$ diagrams to elements of the set  $\gD_{m+1}$ by  re-numerating properly their
 blocks and accordingly re-numerating the vertices in the blocks as in \eqref{block}. Do this by numerating the blocks from top to the bottom and from
  left to right, as in the examples above with $m=2$. 
  
  d) Doing the same for all  blocks, corresponding to  all possible $(m+2)(m+1)/2$ terms \eqref{term},  get the diagrams, forming the set $\gD_{m+1}$.

The set  $\ov\gD_{m+1}$ is constructed inductively in the same way. 
 
 \smallskip 
 
For further needs we note that due to the factors $\de'^{12}_{34}$ and  $\de_{s_4}^s$ in \eqref{an2}, 
the indices $\xi_i,\s_j$ entering the formula for the sums $I_s(\gD)$ from \eqref{a^m=}  satisfy the relations
\be\lbl{lin-rel} 
\begin{split}
&1)\ \de'^{\xi_{2j-1} \xi_{2j}}_{\s_{2j-1}\s_{2j}}=1 \;\forall j, \qu \\
&2)\ \mbox{indices $\xi_i$, $\s_j$ corresponding to adjacent in $\gD$ vertices are equal.}
\end{split}
\ee

\subsection{Feynmann diagrams for expectations}
The main objects we are interested in are the  correlations  $\EE a_{s_1}^{(m)}(\tau_1)\bar a_{s_2}^{(n)}(\tau_2)$. 
It can be shown that they vanish if $s_1\ne s_2$.\footnote{As well vanish the correlations   $\EE a_{s_1}^{(m)}(\tau_1) a_{s_2}^{(n)}(\tau_2)$ and
$\EE \bar a_{s_1}^{(m)}(\tau_1)\bar a_{s_2}^{(n)}(\tau_2)$ for all $s_1,s_2$.
}
To represent an expectation $\EE a_{s}^{(m)}(\tau_1)\bar a_{s}^{(n)}(\tau_2)$ we consider 
 the set of diagrams 
$$
\gD_m\times\ov\gD_n:=\{\gD^1\sqcup \bar\gD^2:\, \gD^1\in\gD_m, \bar\gD^2\in\ov\gD_n\}.
$$
Here a diagram $\gD^1\sqcup\bar \gD^2$ is obtained by drawing  $\gD^1$ and $\bar \gD^2$ side by side, where the 
 blocks of $\gD^1$ are enumerated  from $1$ to $m$,
 while those  of  $\bar\gD^2$ together with the corresponding  time variables $l_j$  are 
 enumerated from $j=m+1$ to $m+n$. The vertices together with the corresponding indices $\xi_{2j-1},\xi_{2j},\s_{2j-1},\s_{2j}$ are enumerated
 accordingly, see fig.~\ref{f:D(m,n)}. 
 The diagram $\gD^1\sqcup\bar \gD^2$ has two roots $c_0^{(m)}$ and $\bar c_0^{(n)}$. For any  $\gD=\gD^1\sqcup \bar\gD^2$  consider
 \be\non
I_s(\gD;\tau_1,\tau_2)=I_s(\gD^1;\tau_1)I_s(\bar\gD^2;\tau_2),
 \ee
 so that 
 $
 a_{s}^{(m)}(\tau_1)\bar a_{s}^{(n)}(\tau_2) = \sum_{\gD\in\gD_m\times\ov\gD_n} 
 I_s(\gD;\tau_1,\tau_2).
$
 Our next task is  to compute $\EE I_s(\gD)$ for each
$\gD\in\gD_m\times\ov\gD_n$. 
\begin{figure}[t]
	a)\qu
	\parbox{4cm}{ 
		\begin{tikzpicture}[]
		\node at (-1,0) (c0) {$c_0^{(2)}$};
		\node at (1,0)  {$\bar c_{0}^{(0)}$};
		
		\node at (-2.1, -1.2) (c1) {$c_1^{(1)}$};
		\node at (-1.4, -1.2)  {$c_2^{(0)}$};
		\node at (-0.7, -1.2)  {$\bar c_1^{(0)}$};
		\node at (0, -1.3) (w2)  {$\bar w_{2}$};  
		
		\node at (-2.1, -2.5)  {$c_3^{(0)}$};
		\node at (-1.4, -2.5)  {$c_4^{(0)}$};
		\node at (-0.7, -2.5)  {$\bar c_3^{(0)}$};
		\node at (0, -2.6) (w4) {$\bar w_{4}$}; 
		
		\draw [line width=0.25mm] (c0.south)--(w2.north);
		\draw [line width=0.25mm](c1.south)--(w4.north);
		
		\end{tikzpicture}
	}
	\hfill
	b)\qu \parbox{6cm}{
		\begin{tikzpicture}
		\path (-1.2,0) node(a0) {$c_0^{(1)}$}
		(-0.2, -1.3) node(b_w2) {$\bar w_{2}$}; 
		\draw [line width=0.25mm] (a0.south)--(b_w2.north);
		\node at (-0.9, -1.2)  {$\bar c_1^{(0)}$};
		\node at (-1.6, -1.2)  {$c_2^{(0)}$};
		\node at (-2.3, -1.2) {$c_1^{(0)}$};

		\node at (1.8,0) (c0) {$\bar c_0^{(2)}$};
		
		\node at (0.8, -1.2)  (c1) {$c_3^{(1)}$};
		\node at (1.5, -1.3) (w2) {$w_4$};
		\node at (2.2, -1.2)  {$\bar c_3^{(0)}$};
		\node at (2.9, -1.2)  {$\bar c_{4}^{(0)}$};  
		
		\node at (0.8, -2.5)  {$c_5^{(0)}$};
		\node at (1.5, -2.5)  {$c_6^{(0)}$};
		\node at (2.2, -2.5)  {$\bar c_5^{(0)}$};
		\node at (2.9, -2.6) (w4) {$\bar w_{6}$}; 
		
		\draw [line width=0.25mm] (c0)--(w2.north);
		\draw [line width=0.25mm](c1.south)--(w4.north);
		\end{tikzpicture}
	}
	\caption{A diagram from the set a) $\gD_2\times \ov\gD_0$ and b) $\gD_1\times\ov\gD_2$.}
	\lbl{f:D(m,n)}
\end{figure}

Randomness enters the term $I_s(\gD)$ via the random variables $a_{\xi_i}^{(0)}$, $\bar a_{\s_j}^{(0)}$, corresponding to the leaves of the diagram 
$\gD=\gD^1\sqcup\bar\gD^2$. They are Gaussian with correlations
\be\label{corr_a_in_time}
	\EE a_s^{(0)}(l_1) a_{s'}^{(0)}(l_2)=0, \quad
	 \EE a_s^{(0)}(l_1) \bar a_{s'}^{(0)}(l_2)
	= \delta^s_{s'} \,\operatorname{Corr}(\ga_s,b(s),l_1,l_2),
\ee
where
\be\lbl{corr()}
\operatorname{Corr}(\ga_s,b(s),l_1,l_2) ={ B_s} \big( e^{-\gamma_s|l_1-l_2|} -
	e^{-\ga_s(l_1+ l_2)}\big),{ \quad B_s = \frac{b(s)^2}{\gamma_s}} \,.
\ee
So the Wick theorem \cite{Jan} implies that the expectation $\EE I_s(\gD)$ is given by a sum over all Wick pairings of variables $a_{\xi_i}^{(0)}$,
 corresponding to non-conjugated leaves $c_i^{(0)}$,  with variables $\bar a_{\s_j}^{(0)}$ corresponding to conjugated leaves  
$\bar c_j^{(0)}$. Moreover, the leaves $c_i^{(0)}$ and $\bar c_j^{(0)}$ should belong to different blocks since otherwise the  summand corresponding to such Wick pairing vanishes due to \eqref{notethat} and 
 item (1) in \eqref{lin-rel}. 
We parametrize the sum over the Wick pairings by the defined below set  $\gF(\gD)$ of Feynman diagrams.
 Denoting by $ J_s(\gF)$ a term (i.e. a sum), corresponding to a specific Feynman diagram $\gF$, we have:
$$
\EE I_s(\gD)=\sum_{\gF\in\gF(\gD)} J_s(\gF)\,.
$$

\subsubsection{Definition of Feynman diagrams}
\lbl{s:FD_def}

To construct the set of Feynman diagrams $\gF(\gD)$, corresponding to some diagram $\gD=\gD^1\sqcup \bar\gD^2$, 
 we consider all possible partitions of the set of leaves of $\gD$
 to non-intersecting pairs  $(c_i^{(0)}, \bar c_j^{(0)})$, such that the paired leaves $c_i^{(0)}$ and $\bar c_j^{(0)}$ do not belong to the same block.
To each such partition we associate a diagram $\gF$ obtained from $\gD$ by joining with  an edge  the two leaves in every pair, see fig.~\ref{f:Feynman}(a). 
 \begin{figure}[t]
 	a)\qu	\parbox{5cm}{ 
 		\begin{tikzpicture}[]
 		\node at (-1,0) (c0) {$c_0^{(2)}$};
 		\node at (1,0) (bc0) {$\bar c_{0}^{(0)}$};
 		
 		\node at (-2.1, -1.2) (c1) {$c_1^{(1)}$};
 		\node at (-1.4, -1.2) (c2) {$c_2^{(0)}$};
 		\node at (-0.7, -1.2) ( bc1) {$\bar c_1^{(0)}$};
 		\node at (0, -1.3) (bw2)  {$\bar w_{2}$};  
 		
 		\node at (-2.1, -2.5) (c3) {$c_3^{(0)}$};
 		\node at (-1.4, -2.5) (c4) {$c_4^{(0)}$};
 		\node at (-0.7, -2.5) (bc3) {$\bar c_3^{(0)}$};
 		\node at (0, -2.6) (bw4) {$\bar w_{4}$}; 
 		
 		\draw [line width=0.25mm] (c0.south)--(bw2.north);
 		\draw [line width=0.25mm](c1.south)--(bw4.north);
 		\draw [line width=0.25mm] (c3.south) -- (-2.1,-3) --(1,-3) --  (bc0.south);
 		\draw [line width=0.25mm](c2.south)--(-0.9, -2.2);
 		\draw [line width=0.25mm](c4.north)--(-0.9, -1.5);
 		\end{tikzpicture}
 	}
 	\hfill
 	b)\qu\parbox{5cm}{ 
 		\begin{tikzpicture}
 		\node at (-1,0) (c0) {$c_0^{(2)}$};
 		\node at (1,0) (bc0) {$\bar c_{0}^{(0)}$};
 		
 		\node at (-2.5, -1.2) (c1) {$c_1^{(1)}$};
 		\node at (-1.6, -1.2) (c2) {$c_2^{(0)}$};
 		\node at (-0.5, -1.2) ( bc1) {$\bar c_1^{(0)}$};
 		\node at (0.4, -1.3) (bw2)  {$\bar w_{2}$};  
 		
 		\node at (-2.5, -2.5) (c3) {$c_3^{(0)}$};
 		\node at (-1.6, -2.5) (c4) {$c_4^{(0)}$};
 		\node at (-0.5, -2.5) (bc3) {$\bar c_3^{(0)}$};
 		\node at (0.4, -2.6) (bw4) {$\bar w_{4}$}; 
 		
 		\draw [line width=0.25mm] (-0.8,-0.3)--  (0.35, -1) ;
 		\draw [line width=0.25mm](c1.south)--(bw4.north);
 		\draw [line width=0.25mm] (c3.south) -- (-2.5,-3.2) --  (1,-3.2) --   (bc0.south);
 		\draw [line width=0.25mm](c2.south)--(-0.9, -2.2);
 		\draw [line width=0.25mm](c4.north)--(-0.8, -1.5);
 		
 		\draw [dashed,line width=0.25mm](-0.8,-0.1)--(0.6,-0.1);
 		\draw [dashed,line width=0.25mm](-1.5, -1.3)--(-0.9, -1.3);
 		\draw [dashed,line width=0.25mm] (-2.2, -0.9) .. controls (-1.1,-0.4) .. (0.2, -1.1);
 		\draw [dashed,line width=0.25mm] (-2.4, -2.8) .. controls (-1.7,-3) .. (-0.7, -2.8);
 		\draw [dashed,line width=0.25mm] (-1.5, -2.8) .. controls (-0.8,-3) .. (0.2, -2.8);
 		\end{tikzpicture}
 	}
 	\hfill
 	\caption{
 		a) A Feynman diagram $\gF$ obtained from the diagram $\gD$ in fig.~\ref{f:D(m,n)}(a). 
 		\ b) A cycle obtained from the Feynman diagram $\gF$.
 	}
 	\lbl{f:Feynman}
 \end{figure}
So  in each  diagram $\gF \in  \gF(\gD)$  all   vertices  of $\gD$
  are joint by edges,   every edge couples  a  conjugated  vertex with a non-conjugated in another block (or with a root), and every vertex belongs to exactly one edge. 
  
  Since $\EE a_s^{(0)}\bar a_{s'}^{(0)}=0$ if $s\ne s'$, the indices $\xi_i,\s_j$ entering the formulas for  sums $J_s(\gF)$ satisfy \eqref{lin-rel},
   where in item (2) the diagram $\gD$ is replaced by the Feynman diagram $\gF$; below we denote these relations as \eqref{lin-rel}$_\gF$.
 In particular, due to the item (2), the vector of indices $\s=(\s_i)$ is a function of the vector $\xi=(\xi_j)$. Accordingly  below we write $\s=\s_\gF(\xi)$.

 
  Let
 $$
 \gF_{m,n}=\bigcup_{\gD\in\gD_m\times\ov\gD_n}\gF(\gD)
 $$
 be the set of all  Feynman diagrams associated with the product $a_s^{(m)}\bar a_s^{(n)}.$
  Each diagram $\gF\in\gF_{m,n}$ has $N:=m+n$ blocks and $4N+2$ vertices, including   $2N+2$ leaves Half of edges (and of leaves) 
  are conjugated, while another half is not. 
 
 By construction a diagram $ \gF \in  \gF_{m,n}$  never pairs leaves from the same block. This alone  does not exclude that $ \gF $ is
 such that in \eqref{lin-rel}$_\gF$ the assumptions (1) and (2) are incompatible since for some $j$ we may have 
 $\xi_{2j-1}, \xi_{2j}=\sigma_{2j-1}$ or $\sigma_{2j}$ once $\s=\s_\gF(\xi)$. Analysis shows that this cannot happen if $m+n\le4$, but  may happen if
 $m+n\ge 5$. Accordingly, we denote by
 $$
  \gF_{m,n}^{true} \subset  \gF_{m,n}
 $$
 the set of Feynman diagrams for which the set of indices $\xi_i,\,\s_j$ satisfying the relations \eqref{lin-rel}$_\gF$ is not empty. For any diagram $\gF\notin\gF_{m,n}^{true} $ we have $J_s(\gF)=0$  due to the factors $\de'^{12}_{34}$ and  $\de_{s_4}^s$ in \eqref{an2}.


\subsection{Transformation, resolving linear relations on indices}
\lbl{s:transform}

Let us take a Feynman diagram $\gF\in\gF_{m,n}$, denote $N:=m+n\geq 1$ and consider the sum  $J_s(\gF)$.
The relations \eqref{lin-rel}$_\gF$ on indices $\xi_i,\s_j\in\Z^d_L,$ $0\leq i,j\leq 2N$,\,\footnote{Recall that 
$\xi_0=\s_0=s$.} 
entering the formula for $J_s(\gF)$ are involved,  which makes the sum $J_s(\gF)$ difficult for further analysis. 
In \cite{DK2} it was found a convenient way to ``integrate the sums  $J_s(\gF)$", i.e. to 
 parametrise the indices $\xi_i,\s_j$ by  $N$-vector $z=(z_1,\ldots,z_N)$ from a domain in  $(\Z_L^d)^N$, free from any relations on its components.  In this section we present this parametrisation, referring the reader to \cite{DK2} for a proof. 

Since item (2) of \eqref{lin-rel}$_\gF$ is equivalent to the relation $\s=\s_\gF(\xi)$, it suffices to parametrise the  set 
of {\it admissible} multi-indices $\xi$, i.e. of those $\xi$ for which the multi-indices $\xi$ and $s=\s_\gF(\xi)$ satisfy item (1) in \eqref{lin-rel}$_\gF$.
  The construction starts with defining for 
 each $\gF \in  \gF_{m,n}$   a skew-symmetric $N\times N$ {\it incidence} matrix $\alpha^\gF =(\alpha^\gF_{ij})$ whose elements are integers from the set
 $\{0, +1, -1\}$. In terms of this matrix we define the set of polyvectors 
 \be\label{ZZ}
 \cZ(\gF) = \{ z= (z_1, \dots, z_N)
 \in (\Z_L^{d})^{N} :\, z_j \ne0 \;\text{and}\; (\alpha^\gF z)_j\ne 0 \quad  \forall\,j\}\,.
 \ee
 Here and below for an $M\times N$-matrix $A$ we denote by $Az$ the polyvector with components $(Az)_j:=\sum_{i}A_{ji}z_i\in\Z^d$.\,\footnote{That
 is, abusing notation   we denote by   $A$ an operator in $(\Z^d)^M$  with the block-matrix 
 $A\otimes  \mathbbm{1} $.}
  The matrix $\al^\gF$  has no zero rows and zero columns if and only if $\gF\in\gF^{true}_{m,n}$, and, accordingly, the set
 $\cZ(\gF)$ is non-empty if and only if  $\gF\in \gF_{m,n}^{true}$.  
 Next, it  turns out that the  vectors $z\in  \cZ(\gF) $ may be used to parametrize  the set of admissible 
 indices  $\xi_i$ by means of an affine mapping 
 \be\lbl{z->xi}
 \xi(z)= s+ A^{ \gF}  z ,\qquad z\in \cZ(\gF).
 \ee
 Here $A^{ \gF} $ is an $(2N+1)\times N$-matrix, whose elements again are integers from the set  $\{0, +1, -1\}$. 
 Transformation \eqref{z->xi} provides a presentation of the terms $J_s(\gF)$, forming the correlation  $\EE a_{s}^{(m)}(\tau_1)\bar a_{s}^{(n)}(\tau_2)$, and so for 
 the correlation itself. The corresponding result is proved in Theorem~5.5 of \cite{DK2}. In our setting its statement, where for the function 
 $\theta(x,t)$ is chosen $\mI_{\{0\}}(x)$ -- the indicator function of the point $x=0$ --  takes the following form:

\begin{lemma}\lbl{l:E-fin}
  For any integers $m,n\geq 0$ satisfying $N=m+n\geq 1$, any $s\in\Z^d_L$ and\
  $\tau_1,\tau_2\geq 0$, 
  
  1) for each $\gF\in \gF_{m,n}^{true}$   parametrisation \eqref{z->xi}  (depending on $s$ and $\gF$) 
  is such that the quantity $\omega_j$ in \eqref{de,om}, written in the $z$-coordinates, takes the form 
\be\lbl{om_j^F(z)}
\om_j^\gF(z)=2z_j\cdot \sum_{i=1}^N \al_{ji}^\gF z_i= 2z_j\cdot (\al^\gF z)_j. 
\ee 

2) We have 
  \be\lbl{final_Ea^ma^n}
L^N\EE a^{(m)}_s(\tau_1) \bar a^{(n)}_s(\tau_2)
  = \sum_{\gF\in\gF^{\,true}_{m,n}} c_\gF  J_s(\tau_1,\tau_2; \gF),
  \ee
 where the constants  $c_\gF\in\{\pm 1,\pm i\}$ and
 \be
 \lbl{J(F)-z}
 J_s(\tau_1,\tau_2; \gF)=
 \int_{\R^N}
 dl\,
 L^{N(1-d)}\sum_{z\in\cZ(\gF), \  \omega_j^\gF (z) =0 \, \forall j}\,F_s^\gF(\tau_1,\tau_2,l,z)\,.
\ee
 The density $F^\gF_s(\tau_1,\tau_2,l,z)$ is a real  function,  smooth in  $(s,z)\in\R^d\times\R^{dN}$,    satisfying 
\be\lbl{F-prop1}
|\p_{s}^\mu\p_{z}^{\ka}F_s^\gF(\tau_1,\tau_2,l,z)|\leq C_{\mu,\ka}^{\#}(s)C_{\mu,\ka}^{\#}(z)\,e^{-\de \big(\sum_{i=1}^m|\tau_1-l_i|                                    
        +\sum_{i=m+1}^N|\tau_2-l_i|\big)}
\ee
with a suitable $\delta=\de_N>0$, for any vectors $\mu\in(\N\cup\{0\})^d$, $\ka\in(\N\cup\{0\})^{
  dN}$ and any $s\in\R^d$, $z\in\R^{dN}$, $l\in\R^N$.
\end{lemma}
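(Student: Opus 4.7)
My plan is to follow the template of Theorem~5.5 in \cite{DK2}, which proves an essentially identical statement (with their kernel $\theta$ specialised to the indicator $\mI_{\{0\}}$), and adapt it to our notation. The proof splits naturally into three stages: applying the Wick formula, constructing the parametrisation $\xi(z) = s + A^\gF z$, and extracting the properties of the density $F_s^\gF$.

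First I would start from the representation $a_s^{(m)}(\tau_1)\bar a_s^{(n)}(\tau_2) = \sum_{\gD\in\gD_m\times\ov\gD_n} I_s(\gD;\tau_1,\tau_2)$ obtained by iterating Duhamel's formula \eqref{an2}. Taking expectation and applying the Wick formula to the Gaussian leaves $a^{(0)}_{\xi_i},\bar a^{(0)}_{\sigma_j}$ with correlations \eqref{corr_a_in_time}--\eqref{corr()}, each $\EE I_s(\gD)$ becomes a sum over Feynman pairings $\gF\in\gF(\gD)$ of terms $J_s(\gF)$. The Kronecker delta in \eqref{corr_a_in_time} enforces item (2) of \eqref{lin-rel}$_\gF$, while the factors $\delta_j$ in \eqref{de,om} enforce item (1). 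Summing over $\gD$ produces \eqref{final_Ea^ma^n} with constants $c_\gF\in\{\pm1,\pm i\}$ tracking the signs from the $i\rho$ factors in \eqref{an2} and its complex conjugate; diagrams $\gF\notin\gF_{m,n}^{true}$ give $J_s(\gF)=0$, because the system \eqref{lin-rel}$_\gF$ then has no solutions for the indices $\xi_i,\sigma_j$.

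The main work is constructing the parametrisation. Item (2) of \eqref{lin-rel}$_\gF$ determines the map $\sigma=\sigma_\gF(\xi)$ via the Wick edges, so the task reduces to parametrising the admissible $\xi$'s subject to $\xi_{2j-1}+\xi_{2j}=\sigma_{2j-1}+\sigma_{2j}$ in each block. Following \cite{DK2}, I would traverse $\gF$ downwards from the roots and, in each block $j$, introduce a free "momentum-transfer" variable $z_j\in\Z_L^d$, for instance $z_j = \xi_{2j-1}-\sigma_{2j}$, then express the remaining indices of the block as affine combinations of $s$ and the $z_i$ inherited along Wick edges. This yields $\xi(z)=s+A^\gF z$ with $A^\gF\in\{0,\pm1\}^{(2N+1)\times N}$ and identifies the admissibility condition with $z\in\cZ(\gF)$ as defined in \eqref{ZZ}. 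Non-emptiness of $\cZ(\gF)$ is equivalent to $\alpha^\gF$ having no zero rows or columns, hence to $\gF\in\gF_{m,n}^{true}$. To verify \eqref{om_j^F(z)}, use the second identity in \eqref{omega} rewritten for block $j$:
\begin{equation*}
\omega_j = -2(\xi_{2j-1}-\sigma_{2j})\cdot(\xi_{2j}-\sigma_{2j}) = 2(\xi_{2j-1}-\sigma_{2j-1})\cdot(\xi_{2j-1}-\sigma_{2j}),
\end{equation*}
and observe that, by construction, $\xi_{2j-1}-\sigma_{2j}=z_j$ while $\xi_{2j}-\sigma_{2j}$ collects contributions solely from the $z_i$ with $i\neq j$; defining $\alpha^\gF_{ji}$ as the coefficient of $z_i$ in that expression gives $\omega_j^\gF(z)=2 z_j\cdot(\alpha^\gF z)_j$. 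The resulting matrix $\alpha^\gF$ is skew-symmetric because the two endpoints of each Wick edge play symmetric roles in the two blocks they connect. The hard part of the argument lives here: verifying that the described walk on $\gF$ is well-defined and bijective on admissible indices, and that the coefficients line up so that $\alpha^\gF$ is indeed skew-symmetric with the claimed simple form of $\omega_j^\gF$; this is the combinatorial core of the construction in \cite{DK2}.

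Finally, substituting $\xi(z)=s+A^\gF z$ into $J_s(\gF)$, the integrand becomes a product of (i) Duhamel exponentials $e^{-\gamma_{\xi(z)_i}(\tau_k-l_j)}$, (ii) Wick correlation factors $B_{\xi(z)_i}(e^{-\gamma|l_j-l_{j'}|}-e^{-\gamma(l_j+l_{j'})})$ from \eqref{corr()}, and (iii) Schwartz intensities $b(\xi(z)_i)$; its natural smooth extension to $(s,z)\in\R^d\times\R^{dN}$ defines $F_s^\gF$. Smoothness in $(s,z)$ follows from smoothness of $\gamma^0$ and $b$; the factor $C_{\mu,\kappa}^\#(s)\,C_{\mu,\kappa}^\#(z)$ in \eqref{F-prop1} arises because $b$ is Schwartz and its derivatives decay faster than any polynomial along every axis of the affine change of variables, while derivatives of $\gamma_{\xi(z)_i}^{-1}$ grow only polynomially by the standing assumption on $\gamma^0$ (bounded below by $1$). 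The exponential envelope in $l$ follows by grouping the $e^{-\gamma(\tau_k-l_j)}$ and $e^{-\gamma|l_j-l_{j'}|}$ factors along the tree structure of $\gF$ rooted at $c_0^{(m)}$ and $\bar c_0^{(n)}$ and extracting a positive rate $\delta_N>0$ using $\gamma_\cdot\geq 1$; this gives the exponential decay in $|\tau_k-l_i|$ claimed in \eqref{F-prop1}.
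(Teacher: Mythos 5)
Your proposal follows the same route as the paper: the paper does not reprove this lemma but quotes Theorem~5.5 of \cite{DK2} (specialised to $\theta=\mI_{\{0\}}$) and only sketches the construction of the parametrisation \eqref{z->xi} via the choice of dashed intra-block edges turning $\gF$ into a cycle, which is exactly the combinatorial core you correctly identify and likewise defer to \cite{DK2}. Your Wick-formula reduction, the identity $\om_j=2(\xi_{2j-1}-\s_{2j-1})\cdot(\xi_{2j-1}-\s_{2j})$, and the derivation of the bounds \eqref{F-prop1} from the Schwartz property of $b$, the polynomial growth of derivatives of $\ga^0$, and $\ga_\cdot\ge1$ all match the paper's (cited) argument.
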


Let us briefly explain the way to construct  the parametrization \eqref{z->xi}. We first add to the Feynman diagram $\gF$ dashed edges that couple non-conjugated vertices with conjugated {\it inside} all blocks, as in fig.~\ref{f:Feynman}(b). For each block there are two ways of doing that. We prove that there exists a choice (possibly, not unique) of a  dashed edge in each block 
  such that the diagram becomes a cycle, as in fig.~\ref{f:Feynman}(b).  Then, for each $j$ we set $x_{2j-1}:=\xi_{2j-1}-\s_{2j-1}$ and $x_{2j}:=\xi_{2j}-\s_{2j}$ or $x_{2j-1}=\xi_{2j-1}-\s_{2j}$ and $x_{2j}:=\xi_{2j}-\s_{2j-1}$, according to the choice of the dashed edges in the $j$-th block, where we substitute $\s=\s_\gF(\xi)$.
The fact that the Feynman diagram with added dashed edges forms a cycle implies that the transformation $\xi\mapsto x$ is invertible. Item (1) of \eqref{lin-rel}$_\gF$ implies that $x_{2j}=-x_{2j-1}$. Then we set $z_j:=x_{2j-1}$ and get \eqref{z->xi}. 
The incidence  matrix $\al^\gF$  also is constructed in terms of this cycle.

Since the choice of the dashed edges in general is not unique, the parametri\-za\-tion $z\mapsto\xi$ is not unique as well. However, if $z'\mapsto\xi$ is another parametrization,  obtained by the procedure above, and $\al^{\gF\,\prime}$ is the associated incidence 
matrix, then for each $j$ we have either $z_j'(\xi)=z_j(\xi)$ or $z_j'(\xi)=(\al^\gF z(\xi))_j$. In the latter case we also have the symmetric relation $z_j(\xi)=(\al^{\gF\,\prime} z'(\xi))_j$.

	Computing in \eqref{J(F)-z} the integral over $dl$ and using estimate \eqref{F-prop1}, we obtain a form of  integrals~$J_s$, 
	more convenient for some of  the subsequent analysis:
	\begin{corollary}\lbl{cor:Diag}
		In terms of Lemma~\ref{l:E-fin}, the integrals $J_s$ from \eqref{final_Ea^ma^n} can be written as 
		\be\lbl{corr-sums}
		 J_s(\tau_1,\tau_2;\gF) = L^{N(1-d)}\sum_{z\in\cZ(\gF), \  \omega_j^\gF (z) =0 \, \forall j}\,\Phi_s^\gF(\tau_1,\tau_2,z)\,,
		\ee
		where the real-valued functions $\Phi_s^\gF$ are Schwartz in $(s,z)$ and satisfy
		\be\lbl{cor_diag_est}
			|\p_{s}^\mu\p_{z}^{\ka}\Phi_s^\gF(\tau_1,\tau_2,z)|\leq C_{\mu,\ka}^{\#}(s)C_{\mu,\ka}^{\#}(z),             
		\ee
			uniformly in $\tau_1,\tau_2\geq 0$, for any vectors $\mu\in(\N\cup\{0\})^d$ and $\ka\in(\N\cup\{0\})^{
			dN}$.
	\end{corollary}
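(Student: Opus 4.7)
The natural plan is to simply perform the $dl$-integration inside the formula \eqref{J(F)-z} of Lemma~\ref{l:E-fin}. That is, I would define
\be\non
\Phi_s^\gF(\tau_1,\tau_2,z) := \int_{\R^N} F_s^\gF(\tau_1,\tau_2,l,z)\,dl,
\ee
and then verify that (i) the integral converges absolutely and depends smoothly on $(s,z)$, (ii) the exchange of $\int dl$ and the $z$-sum in \eqref{J(F)-z} is legitimate, and (iii) the pointwise estimate \eqref{cor_diag_est} holds uniformly in $\tau_1,\tau_2\ge 0$. Reality of $\Phi_s^\gF$ is inherited from the reality of $F_s^\gF$.

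The decisive input is the exponential decay in \eqref{F-prop1}, whose rate $\delta=\delta_N>0$ depends only on $N=m+n$ and is independent of $\tau_1,\tau_2$. Applying \eqref{F-prop1} with $\mu=\ka=0$ gives
\be\non
\int_{\R^N} e^{-\delta\bigl(\sum_{i=1}^m |\tau_1-l_i| + \sum_{i=m+1}^N |\tau_2-l_i|\bigr)}\,dl = \prod_{i=1}^N \int_\R e^{-\delta|\tau_{(i)}-l_i|}\,dl_i = (2/\delta)^N =: K_N,
\ee
where $\tau_{(i)}\in\{\tau_1,\tau_2\}$ depending on $i$. Crucially $K_N$ is independent of $\tau_1,\tau_2$, which is the source of the required uniformity.

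Next, differentiation under the integral sign is justified by applying \eqref{F-prop1} to $\p_s^\mu\p_z^\ka F_s^\gF$ and using the same dominated integrability; this yields
\be\non
|\p_s^\mu \p_z^\ka \Phi_s^\gF(\tau_1,\tau_2,z)| \le K_N\, C_{\mu,\ka}^{\#}(s)\, C_{\mu,\ka}^{\#}(z),
\ee
and absorbing $K_N$ into either $C^\#$-factor gives \eqref{cor_diag_est}; in particular $\Phi_s^\gF$ is Schwartz in $(s,z)$. To justify swapping the $z$-sum with $\int dl$ in \eqref{J(F)-z}, I invoke Fubini--Tonelli: by the $\mu=\ka=0$ case of \eqref{F-prop1} together with $\sum_{z\in(\Z_L^d)^N} C^\#(z)<\infty$ (since $C^\#$ decays faster than any polynomial), the double sum-integral converges absolutely, so the swap is valid and \eqref{corr-sums} follows from \eqref{J(F)-z}.

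There is essentially no obstacle here: the whole argument is a direct consequence of the pointwise exponential bound \eqref{F-prop1} and the $(s,z)$-decay given by the $C^\#$ factors. The only thing to keep in mind is that the resonance constraints $\omega_j^\gF(z)=0$ and the admissibility set $\cZ(\gF)$ on which the sum is taken are unaffected by the $l$-integration, so they can be carried through unchanged.
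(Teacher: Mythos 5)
Your proposal is correct and follows exactly the paper's own (one-line) argument: the paper obtains Corollary~\ref{cor:Diag} by "computing in \eqref{J(F)-z} the integral over $dl$ and using estimate \eqref{F-prop1}", which is precisely your definition of $\Phi_s^\gF$ together with the uniform-in-$(\tau_1,\tau_2)$ factor $(2/\delta)^N$ from the exponential decay. The additional details you supply (dominated differentiation under the integral and Fubini--Tonelli for the sum--integral swap) are the routine justifications the paper leaves implicit.
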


 \section{Main estimates for the sums}\lbl{sec:numbertheory}
In this section we focus  on  estimates for the sums  \eqref{corr-sums}
and on their dependence on $L$ and $N$.
 We recall that $d\ge3$.
It is convenient to study the problem we consider in the following abstract setting.
Let $\al=(\al_{ij})$, $N\geq 2$, be an $N\times N$ skew-symmetric matrix whose elements belong to the set $\{-1, 0, 1\}$,
 without zero lines and rows. 
 \footnote{ The theorems below and their proofs remain valid as well for arbitrary skew-symmetric matrices 
 with integer elements without zero lines and rows, but in this case the notation used in the proof becomes  heavier.} 
Consider a family of quadratic forms on~$(\R^{d})^N$
$$
\om_j(z)=z_j\cdot{ (\al z)_j},  \qquad 1\leq j\leq N,
$$  
where  
$z$ is the polyvector 
$
(z_1,\ldots,z_N),
$
$z_j\in\R^d$, and
 $(\al z)_j: = \sum_{i=1}^N \al_{ji}z_i$. Let us  set
\be\lbl{Zabs}
\cZ=\{z\in  (\Z^{d}_L)^N :\, z_j \ne0 \qnd(\al z)_j\ne 0 \quad  \forall\,j\}.
\ee
Let a function $\Phi:
\mathbb R^{Nd}\to \mathbb \R$ be sufficiently smooth and
sufficiently fast decaying at 
infinity (see below for exact  assumptions).
Our goal is to study asymptotic as $L\to\infty$ behaviour of the sum 
 \be\lbl{S_L}
 S_{L,N}(\Phi) := L^{N(1-d)} \sum_{z\in\cZ: \  \omega_j (z) =0 \, \forall j} \Phi(z). 
 \ee

For a function $f\in C^k(\mathbb R^m)$,
 $n_1\in\N\cup\{0\}$ satisfying $n_1\le k$ and $n_2\in \mathbb R$,  we set
\be\label{the_norm}
\|f\|_{n_1,n_2}= \sup_{z\in \mathbb R^m} \max_{|\alpha|\le n_1} |\p^\alpha
f(z)| \lan z\ran^{n_2}\,, \qquad 
\langle x\rangle := \max\{1,|x|\}\,.
\end{equation}
 The first crucial result concerns the case $N=2$. 
 Then 
	$\om_1(z)=-\om_2(z)=\al_{12}z_1\cdot z_2$ and $\al_{12}\ne 0,$ so
	\be\lbl{S_L2}
	S_{L,2}(\Phi) = L^{2(1-d)}\sum_{\substack{z\in\Z^{2d}_L: \  z_1\cdot z_2 =0 \\ z_1\ne 0,\;z_2\ne 0}} \Phi(z).
	\ee
Then we write the sum above as $\sum_{z_1\cdot z_2 =0} - \sum_{z_1=0 \mbox{ \footnotesize or }z_2=0}.$ 
Since \\ $\Big| L^{-d}\sum_{z\in\Z_L^{2d}:\ z_i=0} \Phi(z)\Big|\leq C\|\Phi\|_{0,d+1}$ for $i=1,2$, we get
\be\lbl{S_L,2 approx}
\Big| S_{L,2}(\Phi) - L^{2(1-d)} \sum_{z\in\Z^{2d}_L: \  z_1\cdot z_2 =0} \Phi(z) \Big| \leq CL^{2-d}\|\Phi\|_{0,d+1}.
\ee
Now  an asymptotic for the sum $S_{L,2}(\Phi)$ immediately follows from Theorem~1.3 in \cite{number_theory}  where
the dimension is  $2d$,  $\eps=1/2$ and {$m=0$, by applying it to the sum $\sum_{z_1\cdot z_2 =0}$ in  \eqref{S_L,2 approx}
(we recall that $d\ge3$): 

\begin{theorem}\lbl{t:numbertheory}
Let  $N_1(d):=4d(4d^2+2d-1)$ and  $N_2(d):=N_1+6d+4$. If
$\|\Phi\|_{N_1,N_2}< \infty$, then  there exist constants $C_d\in (1,   1+2^{2-d})$,
 and  $K_d>0$ such that
  \be\label{H-B}
  \left|S_{L,2}(\Phi) -  C_d \int_{\Sigma_0}
{\Phi(z)} \, 
\mu^{\Sigma_0} (
dz_1dz_2) 
\right|\le K_d\frac{\|\Phi\|_{N_1,N_2}}{L^{d-5/2}} \,,
  \ee
where  $\Sigma_0$ is the quadric  $ \{z\in\R^{2d}: z_1 \cdot
z_2=0\}$ and the measure  $\mu^{\Sigma_0} $ is given by \eqref{meas} with $s=0$.
\end{theorem}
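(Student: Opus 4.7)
The plan is to derive Theorem \ref{t:numbertheory} as a direct corollary of the refined Heath--Brown circle method established in \cite{number_theory}, after a short reduction that eliminates the degenerate lattice points with $z_1 = 0$ or $z_2 = 0$ in the sum \eqref{S_L2}.

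First I would justify estimate \eqref{S_L,2 approx}. The contribution of the terms with $z_1 = 0$ is $L^{2(1-d)} \sum_{z_2 \in \Z^d_L} \Phi(0, z_2) = L^{2-d} \cdot L^{-d} \sum_{z_2 \in \Z^d_L} \Phi(0, z_2)$, and the last factor is a Riemann sum for the rapidly decaying function $\Phi(0,\cdot)$, bounded by a constant times $\|\Phi\|_{0, d+1}$. The symmetric bound holds for $z_2 = 0$, which yields \eqref{S_L,2 approx}. Since $L^{2-d} = L^{-(d-2)} \le L^{-(d-5/2)}$ for $L\ge1$, this error is absorbed into the main error term in \eqref{H-B}.

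Second, I would apply Theorem~1.3 of \cite{number_theory} to the indefinite integral quadratic form $Q(z_1, z_2) = z_1 \cdot z_2$ on $\R^{2d}$, which has rank $2d$ and, viewed over $\R$, signature $(d,d)$. The hypothesis $d \ge 3$ ensures the ambient dimension $2d \ge 6$, which is the threshold needed for the Heath--Brown method to deliver an asymptotic with a power-saving error. The resulting main term is the product of the singular series for $Q$ and the singular integral of $\Phi$ against the Leray measure on $\{Q = 0\}$. A direct computation identifies this Leray measure with $\mu^{\Sigma_0}$ from \eqref{meas} taken at $s=0$: indeed $\nabla Q(z_1, z_2) = (z_2, z_1)$, so $|\nabla Q| = (|z_1|^2 + |z_2|^2)^{1/2}$, which matches the weight in \eqref{meas}. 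The singular series reduces to the local-density constant $C_d$, and the explicit enclosure $C_d \in (1, 1 + 2^{2-d})$ is exactly what is derived in \cite{number_theory} for this specific form.

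The main obstacle is external to the present paper: it is the refined Heath--Brown method of \cite{number_theory}, which upgrades the original result of \cite{HB} in two ways essential to us --- it incorporates an arbitrary smooth weight $\Phi$ rather than a fixed cutoff, and it quantifies the remainder through a high-order weighted $C^{N_1}$-norm $\|\Phi\|_{N_1, N_2}$ with the specific, polynomial-in-$d$ values $N_1 = 4d(4d^2 + 2d - 1)$ and $N_2 = N_1 + 6d + 4$. Once that input is available, what remains inside this paper is essentially bookkeeping: verifying \eqref{S_L,2 approx} to discard the degenerate lattice points, identifying the Leray measure produced by the circle method with $\mu^{\Sigma_0}$, and checking that the exponents $2-d$ and $5/2-d$ are compatible so that the truncation error does not dominate.
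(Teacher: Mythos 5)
Your proposal is correct and follows essentially the same route as the paper: the paper likewise splits off the degenerate terms with $z_1=0$ or $z_2=0$ (estimate \eqref{S_L,2 approx}, with error $O(L^{2-d}\|\Phi\|_{0,d+1})$, absorbed since $2-d<5/2-d$) and then invokes Theorem~1.3 of \cite{number_theory} for the form $z_1\cdot z_2$ in dimension $2d$ to obtain the main term with the measure $\mu^{\Sigma_0}$ and the constant $C_d$. The only substance lies in the external refinement of the Heath--Brown method, exactly as you identify.
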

 In Appendix C of \cite{number_theory} we give the following explicit formula for the number-theoretical constants $C_d$:
\be\lbl{const_C_d} 
C_d =\frac{\zeta(d-1)\zeta(4d-2) }{\zeta(d)\zeta(2d-2)},
\ee
where $\zeta$ is the Riemann zeta-function. Due to \eqref{const_C_d}  $C_d$ satisfies $1<C_d<1+2^{2-d}$, as is  stated in the theorem. 
The integral in \eqref{H-B} converges if $\Phi(z)$ decays at infinity fast enough: 
\be\label{int_est}
\Big|\int_{\Sigma_0} {\Phi(z)} \, \mu^{\Sigma_0} (dz_1dz_2) \Big| \le C_r \| \Phi\|_{0,r} \quad\text{if} \;\; r>2d-1,
\ee
see Proposition 3.5 in \cite{DK1}.  So, it converges under the theorem's assumptions. 

From Theorem \ref{t:numbertheory}
another result can be deduced, whose proof is given in the next  Section~\ref{sec:intersection}:
\begin{theorem}\lbl{t:countingterms}
	For $N =  2, 3,4$ there exist constants $C_{d,N}$ such that
	\be\label{assertion}
	\big| S_{L,N}(\Phi) \big| \le C_{d,N}\|\Phi\|_{0,\bar N}\,,
	\ee
	for $\bar N:=\lfloor N/2 \rfloor N_2(d)+(N-2)(d-1)+ 1$, where  $N_2$ is 
	defined in Theorem~\ref{t:numbertheory}. 
\end{theorem}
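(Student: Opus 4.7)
The plan is to treat $N=2$ as the base case via Theorem~\ref{t:numbertheory} and then bootstrap to $N=3,4$ by iterating the two-variable estimate along carefully chosen pairs of indices, using Bezout's theorem for finite fields to control the residual coupled constraints.

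For $N=2$, Theorem~\ref{t:numbertheory} together with the diagonal correction \eqref{S_L,2 approx} and the integral bound \eqref{int_est} already yields uniform boundedness of $S_{L,2}(\Phi)$, but only in the derivative norm $\|\Phi\|_{N_1,N_2}$. To promote this to the sup-norm statement \eqref{assertion}, I would either smooth $\Phi$ by convolution (using the extra decay to absorb the smoothing error) or, more directly, decompose $\Phi$ dyadically in $|z|$ and apply the Heath-Brown count $\#\{n\in\Z^{2d}\cap[-M,M]^{2d}:n_1\cdot n_2=0\}\le CM^{2d-2}$ shell by shell, which only requires sup bounds. Since $\bar N(2)=N_2(d)+1>2d-2$ and the prefactor $L^{2(1-d)}$ exactly cancels the lattice count $L^{2d-2}$, the resulting geometric series in the dyadic parameter converges uniformly in~$L$.

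For $N=3,4$, iterate the $N=2$ estimate over $\lfloor N/2\rfloor$ disjoint pairs of indices. For $N=3$, since $\alpha$ has no zero rows we may assume $\alpha_{12}\neq 0$; the affine change of variables $z_2\mapsto z_2+(\alpha_{13}/\alpha_{12})z_3$ (which preserves $\Z_L^d$ since $\alpha_{ij}\in\{-1,0,1\}$) turns $\omega_1=0$ into $z_1\cdot z_2=0$ for any fixed $z_3$, while the remaining independent constraint $\omega_3=0$ becomes an affine condition linking the pair to $z_3$. Summing over the pair via the $N=2$ bound costs $N_2$ units of decay in the effective weight, after which the outer sum over $z_3\in\Z_L^d$ converges given an additional $d-1+\epsilon$ units of decay; this matches $\bar N(3)=N_2+d$. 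For $N=4$, partition the indices into two pairs and apply the same reduction twice in succession, yielding $\bar N(4)=2N_2+2(d-1)+1$.

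The chief obstacle is the residual coupling among the constraints: for $N=3$ the additional affine condition coming from $\omega_3=0$, and for $N=4$ the cross-couplings between the two pairs through quadrics involving mixed indices, must be shown not to destroy the iteration. Here I would invoke Bezout's theorem for finite fields (the companion number-theoretic input deferred to Appendix~\ref{app:intersection}), which yields uniform bounds on the number of lattice points on intersections of quadrics in $(\Z_L^d)^N$ within a bounded region, independent of the coefficients of $\alpha$. This uniformity is precisely what allows the iteration to close for $N\le 4$; as noted after the statement, the authors conjecture \eqref{assertion} holds for all $N\ge 2$, but the naive Bezout-based counts become insufficient once the codimension of the intersection grows beyond what is covered by the two-quadric case.
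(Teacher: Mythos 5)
Your treatment of $N=2$ (dyadic decomposition in $|z|_\infty$ plus a lattice-point count for the quadric $z_1\cdot z_2=0$ on each shell) is essentially the paper's argument: there the shell count is obtained by applying Theorem~\ref{t:numbertheory} to a smooth bump $w_R$, which yields $C L^{2(d-1)}R^{N_2}$ as in \eqref{est box} -- this is also why $N_2$, rather than $2d-2$, enters $\bar N$.

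The gap is in the cases $N=3,4$, and it is a power-counting problem created by your order of summation. Take $N=3$. After your change of variables the distinguished pair is $(z_1,v)$ with $v=\alpha_{12}z_2+\alpha_{13}z_3$, and you propose to sum over that pair first (for fixed $z_3$) via the $N=2$ bound and then over $z_3\in\Z_L^d$. But the normalization $L^{3(1-d)}=L^{2(1-d)}\cdot L^{1-d}$ leaves only $L^{-(d-1)}$ for the outer sum, while $\sum_{z_3\in\Z_L^d}\langle z_3\rangle^{-r}\sim L^{d}$ for every $r>d$: you are short by a full factor of $L$. That factor must be recovered from the second independent constraint $\omega_2=0$, which for fixed $z_3$ is another quadric in $(z_1,z_2)$; it cannot be imposed on the outer sum alone, and it cannot be fed into Theorem~\ref{t:numbertheory} for the inner sum either, because multiplying the summand by the indicator of $\{\omega_2=0\}$ destroys the smoothness that theorem requires, while Bezout applied to the intersection of the two quadrics in $(z_1,z_2)$ gives only the geometric count $(RL)^{2d-2}$ -- again one uncancelled power of $L$. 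The same deficit (one constraint per configuration that your pairing scheme cannot absorb) occurs for $N=4$. The paper closes the count by reversing the order: the pair $(z_1,v)$ is summed \emph{outermost} and counted by the Heath--Brown input, which supplies the one extra codimension ($L^{2(d-1)}$ points instead of the naive $L^{2d-1}$), and for each fixed $(z_1,v)$ the remaining $N-2$ variables are counted on the intersection of the $N-2$ quadrics $q_j$ of \eqref{eq:q_tilde} by the finite-field Bezout argument of Lemma~\ref{l:intersection}, giving $(RL)^{(N-2)(d-1)}$; the product is exactly $L^{N(d-1)}$. Making the inner count work is where the real labour lies -- one must show the $q_j$ intersect properly (codimension $N-2$), via the independence and irreducibility Lemmas~\ref{l:ind} and \ref{l:irr}, and this is precisely where the restriction $N\le4$ enters. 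Finally, the factor $\lfloor N/2\rfloor N_2$ in $\bar N$ does not come from applying the two-variable estimate to $\lfloor N/2\rfloor$ pairs in general; it comes from the case of a reducible incidence matrix $\alpha$, where Proposition~\ref{p:finite_support} is applied separately to each of up to $\lfloor N/2\rfloor$ irreducible blocks.
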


 Since in view of estimate \eqref{cor_diag_est} 
  the functions $\Phi^\gF_s$ from Corollary~\ref{cor:Diag} satisfy 
	\be\lbl{est-Phi-F}
	\|\Phi^\gF_s(\tau_1,\tau_2,\cdot)\|_{n_1,n_2}\leq C^\#(s), \qquad \forall  n_1,n_2,
	\ee 
	 then 
 the two theorems above apply to study correlations \eqref{final_Ea^ma^n} with $N=m+n\leq 4$.  In fact, in the case $N=2$ the number of 
 Feynmann diagrams is small and the corresponding 
   correlations may be calculated directly without the machinery, developed in 
 Section~\ref{sec:diagram}.  In  Example~\ref{ex_corr} which   illustrats this computation, 
 as well as in a number of situations  below, we  apply Theorem~\ref{t:numbertheory} in the following setting:
 
 \begin{corollary}\lbl{cor:om-sum}
 	Let 
 	$$
 	\cS_{L,2} = L^{2(1-d)} \sum_{1,2,3} \dep \de(\oms) f_s(s_1,s_2,s_3;q),
 	$$
 	where $\oms$ is given by \eqref{omega}, $q\in\R^n$ is a parameter (in applications usually this will be  the time) and $f_s(s_1,s_2,s_1+s_2-s;q)$ 	\footnote{The formula for $s_3$ comes from the relation $\dep=1$. } 
 	is a Schwartz function of $(s_1,s_2,s)$ satisfying $|\p^\mu_{(s_1,s_2,s)} f_s|\leq C_\mu^\#(s_1)C_\mu^\#(s_2)C_\mu^\#(s)$ uniformly in $q$, for any multi-index $\mu$. 
 	Then 
 	\be\lbl{H-B_app}
 	  \left|\cS_{L,2} -  C_d \int_{\Sigma_s}
 	{f_s(s_1,s_2,s_1+s_2-s;q)} \, 
 	\mu^{\Sigma_s} (
 	ds_1ds_2) 
 	\right|\le \frac{C^\#(s)}{L^{d-5/2}} \,,
 	\ee
 	uniformly in $q$, where $\Sigma_s$ and $\mu^{\Sigma_s}$ are the quadric \eqref{i:quadr} and the measure \eqref{meas} on it. 
 \end{corollary}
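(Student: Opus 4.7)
The plan is to reduce the corollary to a direct application of Theorem~\ref{t:numbertheory} via the translation $(s_1,s_2)\mapsto (z_1,z_2)=(s_1-s,s_2-s)$. First I would unpack the defining sum. The factor $\dep$ forces $s_3=s_1+s_2-s$ together with $\{s_1,s_2\}\ne\{s_3,s\}$; under the first of these constraints, formula \eqref{omega} gives $\oms=-2(s_1-s)\cdot(s_2-s)$. A direct inspection shows that $\{s_1,s_2\}=\{s_1+s_2-s,s\}$ is equivalent to $s_1=s$ or $s_2=s$. Substituting $z_1:=s_1-s$, $z_2:=s_2-s$ therefore rewrites
\begin{equation*}
\cS_{L,2} = L^{2(1-d)}\sum_{\substack{z_1,z_2\in\Z^d_L\\ z_1,z_2\ne 0,\ z_1\cdot z_2=0}} \Phi(z_1,z_2), \quad \Phi(z_1,z_2) := f_s(z_1+s,z_2+s,z_1+z_2+s;q),
\end{equation*}
which is precisely the sum $S_{L,2}(\Phi)$ from \eqref{S_L2}.

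Next I would check that $\|\Phi\|_{N_1,N_2}\le C^\#(s)$, with $N_1,N_2$ the constants from Theorem~\ref{t:numbertheory}. Since the change of variables is a translation, derivatives of $\Phi$ in $(z_1,z_2)$ coincide with the corresponding derivatives of $f_s(\cdot,\cdot,\cdot+\cdot-s;q)$ in $(s_1,s_2)$ evaluated at the shifted arguments, and the hypothesis bounds them by $C_\mu^\#(z_1+s)C_\mu^\#(z_2+s)C_\mu^\#(s)$. A short case split — $|z_i|\le 2|s|$ versus $|z_i|>2|s|$ (where $|z_i+s|\ge|z_i|/2$) — yields
\begin{equation*}
C_\mu^\#(z_i+s)\langle z_i\rangle^{N_2}\le C_{\mu,N_2}(1+|s|)^{N_2},
\end{equation*}
and since $C^\#(s)$ absorbs any polynomial factor in $|s|$, this produces $\|\Phi\|_{N_1,N_2}\le C^\#(s)$ uniformly in the auxiliary parameter~$q$.

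Applying Theorem~\ref{t:numbertheory} then gives
\begin{equation*}
\Bigl|\cS_{L,2} - C_d\int_{\Sigma_0}\Phi(z)\,\mu^{\Sigma_0}(dz_1dz_2)\Bigr|\le K_d\,\|\Phi\|_{N_1,N_2}\,L^{-(d-5/2)}\le \frac{C^\#(s)}{L^{d-5/2}}.
\end{equation*}
The final step is to transport the integral back to the original variables through the diffeomorphism $(z_1,z_2)\mapsto(s_1,s_2)=(z_1+s,z_2+s)$ of $\R^{2d}$. This map sends $\Sigma_0$ to $\Sigma_s$ and preserves the Euclidean volume form on $\R^{2d}$, hence sends the induced surface measure on $\Sigma_0$ to that on $\Sigma_s$; the density $(|z_1|^2+|z_2|^2)^{-1/2}$ in \eqref{meas} becomes $(|s_1-s|^2+|s_2-s|^2)^{-1/2}$, i.e.\ exactly the density defining $\mu^{\Sigma_s}$. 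This gives \eqref{H-B_app}.

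There is no genuine obstacle here: all the analytic depth is already contained in Theorem~\ref{t:numbertheory}. The only mild point worth isolating is the conversion from the pointwise decay of $f_s$ in its three (shifted) arguments to the single uniform norm estimate $\|\Phi\|_{N_1,N_2}\le C^\#(s)$, which is routine Schwartz calculus combined with the defining property that $C^\#$ swallows polynomial factors in $|s|$.
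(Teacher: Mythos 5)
Your proposal is correct and follows essentially the same route as the paper's own proof: pass to the translated variables $z_i=s_1-s$, observe that $\dep=1$ (with $s_3=s_1+s_2-s$) is equivalent to $z_1,z_2\ne0$ and that $\oms=-2z_1\cdot z_2$, recognize the resulting sum as $S_{L,2}(\Phi)$ of \eqref{S_L2}, apply Theorem~\ref{t:numbertheory}, and translate the limiting integral back to $\Sigma_s$. The only addition is your explicit verification that $\|\Phi\|_{N_1,N_2}\le C^\#(s)$, a routine point the paper leaves implicit.
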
  
{\it Proof.} In the variables $z_1 = s_1 - s$, $z_2 = s_2 - s$ the quadratic form $\oms$ with $s_3 = s_1+s_2-s$ 
reads $\oms =- 2z_1\cdot z_2$ (see \eqref{omega}). 
Then, taking into account that the relation $\dep=1$ is equivalent to the relations $z_1, z_2 \ne 0$ and $s_3 =s_1+s_2-s$, we find that the sum $\cS_{L,2}$   takes the form \eqref{S_L2}. 
Applying next  Theorem~\ref{t:numbertheory} and changing  in \eqref{H-B} 
back to the variables  $s_1,s_2$   we get \eqref{H-B_app}.
\qed

    \begin{example}\label{ex_corr}
    	Let us calculate     the asymptotic as $L\to\infty$ of 
    $\EE |a_s^{(1)}(\tau)|^2$. Expanding     $a_s^{(1)}$ as in  \eqref{an2} and then using \eqref{corr_a_in_time}
    we get:
    \begin{equation*}
      \begin{split}
\EE|a_s^{(1)}(\tau)|^2 = 2L^{-2d}\sum_{1,2,3}&\dess
\delta(\omega^{12}_{3s}) \int_0^\tau dl_1 \int_0^\tau dl_2 B_{123} \\
&\times 
\prod_{j=1}^3 \left(e^{-\ga_j |l_1-l_2|} -e^{-\ga_j(l_1+l_2)}\right)
e^{\ga_s(l_1+l_2- 2\tau)}\ 
      \end{split}
    \end{equation*}
    with $B_{123}=B_1B_2B_3$, where $B_s$ is defined in \eqref{corr()}.
In the case of $\tau=\infty$ the formula simplifies since  by changing the integration variables 
 as $r_j:=\tau-l_j$ and passing to the limit we get
    \begin{equation*}
      \begin{split}
\EE|a_s^{(1)}(\infty)|^2 =& 2L^{-2d} \sum_{1,2,3}\dess
\delta(\omega^{12}_{3s}) \int_0^\infty dr_1 \int_0^\infty dr_2 \\
&\qquad B_{123}\, e^{-(\ga_1+\ga_2+\ga_3) |r_1-r_2|} e^{-\ga_s(r_1+r_2)}\\
=& \frac{2 L^{-2d}}{\ga_s} \sum_{1,2,3}\dess
\delta(\omega^{12}_{3s}) \frac{B_{123}}{\ga_1+\ga_2+\ga_3+\ga_s} \ .
      \end{split}
    \end{equation*}
   Then, by Corollary~\ref{cor:om-sum}, 
    $$
\left| L^2\EE|a_s^{(1)}(\infty)|^2 -
{ \frac{2C_d}{\ga_s}}\int_{\Sigma_s}\frac{B_{123}\;\mu^{\Sigma_s}(ds_1ds_2)}{\ga_1+\ga_2+\ga_3+\ga_s}\right|
\leq { \frac{C^\#(s)}{L^{d-5/2}}}\,, \quad s_3:= s_1+s_2-s.
$$
\end{example}

\subsection{Proof of Theorem~\ref{t:countingterms}}\label{sec:intersection}
Let us  define the
 geometric quadrics $Q_j:=\{z\in  (\R^{d})^N:\, \omega_j(z)=0\}$ and consider their intersection
  $Q=\cap_{j=1}^N Q_j$. 
  Note that $Q=\cap_{j=1}^{N-1} Q_j$ since the skew symmetry of the matrix $\al$ implies 
  $\om_1+\ldots+\om_N=0$.
 Denote by $B_R^{Nd}$ the open cube $|z|_\infty<R$ in $\R^{Nd}$, where by $|\cdot|_\infty$ we denote the $l_\infty$-norm.

\begin{proposition}\label{p:finite_support}
If $w:\R^{Nd}\mapsto \R$ is such that $ | w|_{L_\infty}< \infty $ and 
$\supp(w)\subset B_R^{Nd}$, where 
 $R\geq 1$,  then for   $N=2,3,4$ we have 
\be\lbl{est_prop3.3}
\left|\sum_{z\in Q\cap \cZ} w(z)\right| \le 
C(N,d) R^{\lfloor N/2\rfloor N_2(d)+(N-2)(d-1)}L^{N(d-1)}| w|_{L_\infty}\,.
\ee
Here $N_2$ is defined in Theorem~\ref{t:numbertheory} and $\cZ$ -- in \eqref{Zabs}.
\end{proposition}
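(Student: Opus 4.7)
My plan is to reduce the weighted sum to a lattice-point count via $|w|\le|w|_{L_\infty}\mathbbm{1}_{B_R^{Nd}}$, smooth the indicator to $\chi_R\ge\mathbbm{1}_{B_R^{Nd}}$ supported in $B_{2R}^{Nd}$ with $|\partial^\ka\chi_R|\lesssim R^{-|\ka|}$ (so $\|\chi_R\|_{N_1(d),N_2(d)}\lesssim R^{N_2(d)}$ since $\chi_R$ is supported in $|z|\le 2R$), and then treat $N=2,3,4$ separately. The target exponent $\lfloor N/2\rfloor N_2(d)+(N-2)(d-1)$ reflects one factor $R^{N_2(d)}$ per application of Theorem~\ref{t:numbertheory} to a pair of variables, plus one factor $R^{d-1}$ per auxiliary hyperplane-style lattice count in $(L^{-1}\Z)^d\cap B_R^d$ (each such hyperplane count also contributing a factor $L^{d-1}$).

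For $N=2$, $\al_{12}\ne 0$ since $\al$ has no zero row, so $Q\cap\cZ\subset\{z:z_1\cdot z_2=0\}$, and Theorem~\ref{t:numbertheory} applied to $\chi_R$ directly yields $CL^{2(d-1)}R^{N_2(d)}$: the main term $C_d\int_{\Sigma_0}\chi_R\,d\mu^{\Sigma_0}\le CR^{2d-2}\le R^{N_2(d)}$ (Proposition~3.5 in \cite{DK1}) and the Heath--Brown error $\lesssim R^{N_2(d)}/L^{d-5/2}$ are both absorbed. For $N=3$ I relabel so that $\al_{12}\ne 0$ and iterate as $\sum_{(z_1,z_2)}\sum_{z_3}$: for each fixed $z_3$ the condition $\om_1=z_1\cdot(\al_{12}z_2+\al_{13}z_3)=0$ becomes $z_1\cdot z_2'=0$ after the invertible affine change $z_2'=\al_{12}z_2+\al_{13}z_3$, and Theorem~\ref{t:numbertheory} bounds the outer sum by $L^{2(d-1)}R^{N_2(d)}$; for each outer $(z_1,z_2)$ the constraint $\om_3=z_3\cdot v=0$ with $v:=\al_{31}z_1+\al_{32}z_2$ is a hyperplane in $z_3$ of count $\lesssim L^{d-1}R^{d-1}$ when $v\ne 0$, while the degenerate locus $v=0$ has codimension $d$ in $(z_1,z_2)$-space and contributes a lower-order term. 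The identity $\om_2=-\om_1-\om_3$ enforces the third quadric, giving $CL^{3(d-1)}R^{N_2(d)+(d-1)}$.

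For $N=4$, a short combinatorial analysis of admissible skew-symmetric $\al\in\{-1,0,1\}^{4\times 4}$ without zero rows gives (up to relabeling) two situations: either (i) the incidence graph $G_\al=\{\{i,j\}:\al_{ij}\ne 0\}$ contains a perfect matching, which after relabeling I may take to be $\{\{1,2\},\{3,4\}\}$ with $\al_{12},\al_{34}\ne 0$, or (ii) $G_\al$ is the star $K_{1,3}$. In case (i) I apply Theorem~\ref{t:numbertheory} to the pair $(z_1,z_2)$ via $\om_1=0$ (reduced as in the $N=3$ argument to $z_1\cdot z_2'=0$) and, independently, to $(z_3,z_4)$ via $\om_3=0$; using the identity $\sum_j\om_j\equiv 0$ the remaining independent constraint decouples into two hyperplane restrictions (one on each pair), each contributing a factor $L^{d-1}R^{d-1}$. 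Case (ii) is easier: since all nontrivial $\om_j$'s reduce to $z_1\cdot z_k=0$ for $k\in\{2,3,4\}$, a single application of Theorem~\ref{t:numbertheory} to $(z_1,z_2)$ together with two hyperplane counts of the form $z_1\cdot z_k=0$ gives a strictly stronger bound that is absorbed into the claim. Combining, $\#\le CL^{4(d-1)}R^{2N_2(d)+2(d-1)}$.

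The main obstacle will be the bookkeeping in case (i) of $N=4$: the three independent quadrics genuinely entangle both pairs, and I cannot naively multiply the two bounds from Theorem~\ref{t:numbertheory} (each of which holds per fixed value of the other pair, which if summed trivially over $(LR)^{2d}$ would yield a spurious $L^{4d-2}$ instead of $L^{4(d-1)}$). The correct strategy is to arrange the iterated summations so that, after one application of Theorem~\ref{t:numbertheory} for an outer pair, the remaining constraints on the inner pair decouple, via the skew-symmetric relation $\sum_j\om_j\equiv 0$, into two genuinely independent linear (hyperplane) restrictions that consume exactly the required $L^{2(d-1)}R^{2(d-1)}$ factor while avoiding double-counting of the entangled quadric.
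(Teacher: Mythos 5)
Your overall skeleton — one application of Theorem~\ref{t:numbertheory} to an outer pair of variables lying on a quadric $z_1\cdot v=0$, times a lattice-point count for the remaining variables — is the same as the paper's, and your $N=2$ case and the smoothing of the indicator are fine. But there is a genuine gap in the inner counts for $N=3,4$: they are \emph{not} hyperplane counts. Take $N=3$ with $\al_{12}=\al_{13}=\al_{23}=1$. The constraint $\om_1=z_1\cdot(z_2+z_3)=0$ forces you (as you propose) to pass to $v=z_2+z_3$ so that the outer pair is $(z_1,v)$; but then the remaining constraint on the inner variable, say $\om_3=z_3\cdot(-z_1-z_2)=z_3\cdot(-z_1-v+z_3)$, contains the term $|z_3|^2$ and is a genuine quadric in $z_3$, not a hyperplane. (If instead you keep $(z_1,z_2)$ as the outer pair so that $\om_3$ really is linear in $z_3$, then $\om_1$ depends on $z_3$ and your outer Heath--Brown count is not well defined — your ``for each fixed $z_3$'' makes the argument circular, since $z_3$ is also the inner summation variable.) For $N=4$ the inner object is an intersection of two quadrics in $(z_2,z_3)$ of the form \eqref{eq:q_tilde}, and your claim that the constraints ``decouple into two genuinely independent hyperplane restrictions'' via $\sum_j\om_j\equiv0$ is false in general; likewise your graph-theoretic dichotomy (perfect matching vs.\ star) does not capture the algebraic structure, since a perfect matching in the incidence graph does not make the system block-diagonal.

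The missing ingredient is precisely the paper's Lemma~\ref{l:intersection}: a bound $\lesssim (NRL)^{(N-2)(d-1)}$ for the number of lattice points on the intersection $\tilde Q_L(z_1,v)\cap B_R^{(N-2)d}$ of the $N-2$ inner quadrics, \emph{uniformly} over all admissible parameters $(z_1,v)$, including every possible degeneration of these quadrics. As Remark~\ref{r_alg_set} explains, this uniformity is the hard point: the paper obtains it arithmetically, by reducing the integer points modulo a prime $p\sim RL$ and applying Bezout's theorem over finite fields, after proving (Lemmas~\ref{l:ind} and~\ref{l:irr}) that the reduced polynomials are linearly independent and irreducible so that the quadrics intersect properly. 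Your proposal would be repaired by replacing the hyperplane counts with this lemma; as written, the elementary counts you invoke do not exist. A secondary, fixable omission: the factor $R^{\lfloor N/2\rfloor N_2}$ arises from the reducible case of the incidence matrix $\al$ (one Heath--Brown application per irreducible block), which your case analysis does not cleanly isolate.
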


{\it Proof.} 
 Below in this proof  for any subset $\mathcal Q \subset \R^{md}$ we denote 
 \be\lbl{QQ_L}
 \mathcal Q_L = \mathcal Q  \cap \Z_L^{md}. 
 \ee
By suitably rearranging  indices $i$  and possibly
multiplying $\om_i$ by $-1$, $\omega_1$ may be assumed to be of the form 
$
\omega_1(z) =z_1\cdot \sum_i \alpha_{1i} z_i
$ with $\alpha_{1N}=1$. Define $v= \sum_i\alpha_{1i} z_i$ so that 
\be\lbl{z->v}
\omega_1(z) =z_1\cdot v \qnd z_N= \alpha_{1N} z_N=
v-\sum_{1<i<N}\alpha_{1i}z_i,
\ee
since $\al_{11}=0$ by the skew symmetry of the matrix $\al$.

For  $N>2$, fix $(z_1,v)\in \mathbb R^{2d}$. Then the remaining quadratic forms
$\omega_j$ with $1<j<N$ as functions of $(z_2,\ldots,z_{N-1})\in \R^{(N-2)d}$ 
 become polynomials $q_j$  of degree at most two, with no
constant term. Namely
\begin{equation}
 \label{eq:q_tilde}
q_j (z_2,\ldots,z_{N-1};z_1,v)=z_j\cdot\Big(\alpha_{j1} z_1 +
\alpha_{jN} v+
\sum_{1<i<N}(\alpha_{ji}-\alpha_{jN}\alpha_{1i})z_i\Big)\,.
\end{equation}

For $1<j< N$  consider the sets 
$$
\tilde Q_j(z_1,v)=\{(z_2, \dots, z_{N-1}) :q_j(z_2,\ldots,z_{N-1};z_1,v)=0\} \subset \mathbb R^{(N-2)d}, 
$$
and their intersection $\tilde Q(z_1,v)=\cap_{1<j<N}\tilde Q_j(z_1,v)$.  We denote 
 $Q_1^0=\{(z_1,v)\in\mathbb R^{2d}:z_1\cdot v=0\}$ (cf. \eqref{z->v}) and set 
\be\label{A2}
A_2=\{(z_1,v)\in\mathbb R^{2d}: z_1\neq
0, v\neq 0\}.
\ee
Since $| \al_{ij} |\le1$, then on the support of $w$ we have $|(z_1,
v)|_\infty \le (N-1)R$. So,  recalling \eqref{QQ_L},  for $N>2$ we get
\begin{equation}\label{eq:sum_intersec}
  \begin{split}
\left|\sum_{z\in \cZ\cap Q } w(z)\right| &
\le C(N,d)| w|_{L_\infty}\sum_{(z_1,v)\in Q^0_{1\,L}
  \cap B_{ (N-1)R}^{2d}} 1  
\\
&\times\sup_{(z_1,v)\in Q^0_{1\,L}\cap A_2\cap  B_{ (N-1)R}^{2d}}\,\;
\sum_{(z_2,\ldots,z_{N-1})\in 
\tilde Q_L(z_1,v) \cap B_R^{(N-2)d}}1\, .
  \end{split}
\end{equation}
 For $N=2$ the same estimate holds with the
  second line replaced by 1.

 To estimate the sum in the first line, we take any smooth function $w_0(x) \ge0$, equal one for $x\le1$ and vanishing for
$x\ge2$. Then 
$$
\sum_{(z_1,v)\in Q^0_{1\,L}\cap B_{ (N-1)R}^{2d}} 
1 \le \sum_{(z_1,v)\in Q^0_{1\,L}} w_R(z_1,v),
$$
where $w_R(z_1, v) :=  w_0\Big( |(z_1,v)| / \big(   (N-1) R\sqrt{2d} \Big)$.  Since for $R\ge1$  and any $a\in\N\cup\{0\}$, $b\ge0$ 
we have 
$
\| w_R\|_{{ a, b}} \le C({ a,b,N,d})R^{b}, 
$
then in view of Theorem~\ref{t:numbertheory} and \eqref{int_est}, 
\be\lbl{est box}
\sum_{(z_1,v)\in Q^0_{1\,L}\cap B_{ (N-1)R}^{2d}} 1 \le 
C  L^{2(d-1)} \big[ R^{2d} + R^{N_2} L^{-d+5/2}\big] \le 
C'  L^{2(d-1)}  R^{N_2},
\ee
where $C, C'$ depend on $d, N, N_1$ and $ N_2$.

To estimate  the second line of \eqref{eq:sum_intersec}
we use the following lemma, proved in Appendix~\ref{app:intersection}.
\begin{lemma}\label{l:intersection}
	Assume that the matrix $\al$ is irreducible. Then for $N= 2,3,4$, any $R\geq 1$ and any  $(z_1,v)\in B^{2d}_{(N-1)R}$ satisfying $(z_1,v)\in  Q^0_{1\,L} \cap A_2$ we have:
	\be\label{estim}
	\big | \tilde Q_L(z_1,v)  \cap B_R^{(N-2)d}  \big|
	\le  2^{(N-2)d}(NRL)^{(N-2)(d-1)}\,.
	\ee
\end{lemma}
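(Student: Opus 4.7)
The plan is to treat $N=2,3,4$ separately and in each case combine a fibration-by-coordinates argument with either Schwartz--Zippel or Bezout. The case $N=2$ is vacuous: there are no free vectors $(z_2,\ldots,z_{N-1})$ and no equations $q_j$, so $\tilde Q_L(z_1,v)$ contains exactly one point and the claim reduces to $1 \le 2^0 (2RL)^0 = 1$. For $N\in\{3,4\}$ I will freeze the first $d-1$ of the $d$ coordinates of each free vector $z_j$, $2\le j\le N-1$, giving at most $(NRL)^{(N-2)(d-1)}$ lattice choices in $\Z_L^{(N-2)(d-1)}\cap B_R^{(N-2)(d-1)}$, and estimate the number of solutions in the remaining $N-2$ scalar coordinates.

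For $N=3$ this reduces to a single quadratic polynomial in $z_2\in\R^d$. Substituting $\alpha_{22}=0$ into \eqref{eq:q_tilde} gives $q_2(z_2)=-\alpha_{12}\alpha_{23}|z_2|^2+z_2\cdot(\alpha_{21}z_1+\alpha_{23}v)$, and I verify that $q_2\not\equiv 0$: if $\alpha_{12}\alpha_{23}\ne 0$ the quadratic part is nondegenerate, while if one of $\alpha_{12},\alpha_{23}$ vanishes then the no-zero-row condition on $\alpha$ (implied by irreducibility) combined with $(z_1,v)\in A_2$ forces the linear coefficient to be a nonzero vector in $\R^d$. The Schwartz--Zippel inequality applied to a nonzero degree-2 polynomial on $\Z^d\cap B_{RL}^d$ then bounds the number of lattice zeros by $2(2RL+1)^{d-1}\le 2^d(3RL)^{d-1}$, matching the claim.

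The case $N=4$ involves two quadratic polynomials $q_2(z_2,z_3)$ and $q_3(z_2,z_3)$ in $(z_2,z_3)\in\R^{2d}$. On each fiber obtained by fixing the first $d-1$ coordinates of $z_2$ and of $z_3$, these become polynomials of degree $\le 2$ in the two scalar unknowns $z_2^{(d)},z_3^{(d)}$; on fibers where this planar system is zero-dimensional, Bezout's theorem bounds the common solutions by $2\cdot 2 = 4$. Fibers on which the system fails to be zero-dimensional are precisely those on which the resultant $\mathrm{Res}_{z_3^{(d)}}(q_2,q_3)$ vanishes identically in $z_2^{(d)}$, a polynomial condition in the $2(d-1)$ frozen coordinates; assuming this resultant is not identically zero, Schwartz--Zippel bounds the exceptional fiber count by $O((RL)^{2(d-1)-1})$ with at most $(2RL+1)^2$ trivial solutions per such fiber, a total that is comfortably absorbed into $2^{2d}(4RL)^{2(d-1)}$.

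The main obstacle is verifying that the resultant above is not identically zero for every irreducible $4\times 4$ skew-symmetric sign matrix $\alpha\in\{-1,0,1\}^{4\times 4}$ and every admissible $(z_1,v)\in A_2\cap Q^0_1$ with the normalization $\alpha_{14}=1$. The role of irreducibility is precisely to exclude block-diagonal sign patterns for which $q_2$ and $q_3$ would decouple or coincide up to a scalar, making the resultant vanish identically; the remaining patterns form a short finite list which can be checked by direct computation using $z_1\ne 0$, $v\ne 0$ and $z_1\cdot v=0$. I expect this bookkeeping to be the technical heart of the proof, deferred to Appendix~\ref{app:intersection}.
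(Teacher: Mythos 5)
Your treatment of $N=2$ and $N=3$ is correct and is genuinely more elementary than the paper's: you stay over $\Z$ and use a Schwartz--Zippel count for a single nonzero quadric, whereas the paper reduces modulo a prime $p\sim NRL$, injects the box into $\mathbb F_p^{(N-2)d}$, and invokes a finite-field Bezout theorem (Theorem~\ref{bz}), so that the whole lemma rests on the single geometric fact that the quadrics $\tilde Q_j^{(p)}$ intersect properly, i.e.\ $\dim\tilde Q^{(p)}=(N-2)(d-1)$ (Lemma~\ref{01-34}). Your verification that $q_2\not\equiv0$ for $N=3$ matches the paper's Lemma~\ref{l:ind} in that special case.

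The case $N=4$, however, is where the real content of the lemma lies, and there your argument has two genuine gaps. First, the key fact --- that $\mathrm{Res}_{z_3^{(d)}}(q_2,q_3)$ does not vanish identically in the frozen coordinates for every irreducible sign matrix $\al$ and every admissible $(z_1,v)$ --- is only asserted and deferred; this is precisely the nontrivial step, and the paper spends Lemmas~\ref{l}, \ref{l:ind} and \ref{l:irr} proving the equivalent statement that the two quadrics are linearly independent and irreducible over $K=\bar{\mathbb F}_p$, hence cannot share a linear factor and therefore meet in codimension two. (Note also that the resultant can vanish identically for degenerate reasons, e.g.\ when both leading coefficients in $z_3^{(d)}$ vanish on a fiber, so ``resultant $\equiv0$ in $z_2^{(d)}$'' and ``the planar system is not zero-dimensional'' are not literally equivalent without extra care.) Second, even granting the non-vanishing, your bookkeeping does not close: you bound the number of exceptional fibers by $O\big((RL)^{2(d-1)-1}\big)$ and allow $(2RL+1)^2$ solutions on each, which gives a contribution of order $(RL)^{2d-1}$, exceeding the target $2^{2d}(4RL)^{2(d-1)}\sim(RL)^{2d-2}$ by a factor of $RL$. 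To repair this you would need to show that on a bad fiber at least one of $q_2,q_3$ still restricts to a nonzero polynomial in $(z_2^{(d)},z_3^{(d)})$ (giving $O(RL)$ rather than $O((RL)^2)$ points), and then separately control the doubly-degenerate fibers --- exactly the stratification headache that the finite-field Bezout route is designed to bypass. As it stands, the proposal does not establish the estimate for $N=4$.
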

This completes the proof of Proposition~\ref{p:finite_support} in the case of irreducible matrix $\al$: indeed, we get
\be\lbl{sum_w_est}
\left|\sum_{z\in \cZ\cap Q } w(z)\right|
\le C(N,d)| w|_{L_\infty} R^{N_2 + (N-2)(d-1)} L^{N(d-1)}.
\ee
If the matrix $\al$ is reducible, it can be reduced through
permutations to a block diagonal matrix with $m$
blocks which are irreducible square matrices of sizes $N_i$ satisfying $\sum_i
N_i=N$. Since $N_i\ge 2$
(otherwise there would be a zero row or column  in $\al$), $ m \leq \lfloor N/2 \rfloor$.
Applying estimate \eqref{sum_w_est} to each block we get the assertion of the proposition.
\qed
\medskip

Now we derive the theorem from the proposition. Let
$\varphi_0(t) = \chi_{(-\infty,1]}(t)$ and for $k\ge 1$, $\varphi_k(t)
  = \chi_{(2^{k-1},2^k]}(t)$. Then $1=\sum_k\varphi_k(t)$ and
    $$
\Phi=\sum_{k=0}^\infty { f}_k(z)\,,\quad f_k(z)=\varphi_k(|z|_\infty)
\Phi(z)\,.
    $$
Then $\mathrm{supp}\,f_k\subset B_k=\{|z|_\infty\le 2^k\}$ and $\|f_k\|_{\infty}\le
C2^{-k\bar N}\|\Phi\|_{0,\bar N}$, {  for any $\bar N$}. Therefore,  by Proposition~\ref{p:finite_support}, 
$$
|S_{L,N}(\Phi)|\le C(N,d)\|\Phi\|_{0,\bar N} \sum_{k=0}^\infty
2^{k(\lfloor N/2 \rfloor N_2+(N-2)(d-1)-\bar N)}\,, 
$$
which converges if $\bar N>\lfloor N/2 \rfloor N_2+(N-2)(d-1)$. This
completes the proof of Theorem~\ref{t:countingterms}.
\qed

\begin{remark}\label{r_alg_set}
For any fixed vector $(z_1,v)$, 
 $\tilde Q(z_1,v)$ is a real algebraic set in $\R^{(N-2)d}$ of codimension $(N-2)$. If  $\tilde Q(z_1,v)$ 
were a smooth manifold of that codimension, then estimate \eqref{estim}, modified by a multiplicative constant 
$C_{\tilde Q(z_1,v)}$, would be obvious. But $\tilde Q(z_1,v)$ is a stratified analytic manifold (with singularities), and to obtain for it a modified 
version of the estimate \eqref{estim} as above, using analytical tools, seems to be a heavy job  since we need a good control for the 
factor $C_{\tilde Q(z_1,v)}$. Instead in Appendix~\ref{app:intersection} we prove the lemma, using arithmetical tools.
\end{remark}

\subsection{On extension of Theorem~\ref{t:countingterms} to any $N$}
\lbl{s:ext_countingterms}

The  restriction on $N$ in the statement
of Theorem~\ref{t:countingterms} comes from  estimate \eqref{estim} in Lemma~\ref{l:intersection}, proved 
only for $N=3,4$. We know that for every $N$ 
 the system of polynomials $q_j(\cdot; z_1, v), 1<j<N$, defining the set { $\tilde Q_L(z_1,v)$} in 
Lemma~\ref{l:intersection}, is linearly independent for any $(z_1,v)$ and any irreducible incidence matrix $\alpha$. Also we know that all 
polynomials  $q_j(\cdot; z_1, v)$ are irreducible; see Lemmas~\ref{l:ind} and \ref{l:irr} in Appendix~\ref{app:intersection}  (there the independence and 
reducibility are understood over some specific algebraically closed field $K$, but the argument also works for $K$ replaced by $\C$). These two facts certainly 
are insufficient to prove Lemma~\ref{l:intersection} for any $N$, but they naturally lead to

\begin{conjecture}\label{con_3.6}
Under assumptions of Lemma~\ref{l:intersection},  for any $N\ge2$ 
$$
 \big | \tilde Q_L(z_1,v)  \cap B_R^{(N-2)d}  \big| \le C(N,d)(RL)^{(N-2)(d-1)}.
$$
\end{conjecture}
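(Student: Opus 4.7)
Following the strategy of Lemma~\ref{l:intersection} (whose proof appears in Appendix~\ref{app:intersection}), the natural route is to reduce the counting problem to one over a finite field and apply a Bezout-type bound. After rescaling $z_i \mapsto L z_i$, counting points of $\tilde Q_L(z_1,v) \cap B_R^{(N-2)d}$ becomes counting integer points of height at most $RL$ on the affine variety $V(\tilde q_2, \dots, \tilde q_{N-1}) \subset \mathbb{A}^{(N-2)d}$, where $\tilde q_j$ is the polynomial $q_j$ from~\eqref{eq:q_tilde} with $v$ replaced by $Lv$. I would then pick a prime $p$ with $2RL < p < 4RL$ (available by Bertrand's postulate); reduction mod $p$ is then injective on the integer box, so the integer-point count is dominated by the number of $\mathbb{F}_p$-points of the reduced variety.

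The $\mathbb{F}_p$-count should then be estimated via a Lang--Weil / Schwartz--Zippel--type bound: if $V(\tilde q_2, \dots, \tilde q_{N-1})$ is a complete intersection of codimension $N-2$ in $\mathbb{A}^{(N-2)d}$, then by Bezout it has at most $2^{N-2}$ geometric irreducible components, each of dimension $(N-2)(d-1)$, and each contributes $O_{N,d}\bigl(p^{(N-2)(d-1)}\bigr)$ points over $\mathbb{F}_p$ away from a finite set of bad primes, which can be handled separately by an easy volume bound. Substituting $p \asymp RL$ then yields the desired bound $C(N,d)(RL)^{(N-2)(d-1)}$.

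The principal obstacle is showing that $q_2, \dots, q_{N-1}$ cut out a complete intersection, uniformly in the parameters $(z_1, v)$ ranging over the open set $Q_1^0 \cap A_2$. Lemmas~\ref{l:ind} and~\ref{l:irr} give only linear independence and individual irreducibility, which are very far from a regular-sequence condition: in principle $N-2$ independent irreducible quadrics could share a positive-dimensional common locus and thereby fail to cut dimension correctly. My plan to overcome this is an induction on $N$ exploiting the bilinear structure $q_j(z) = z_j \cdot \ell_j(z;z_1,v)$ with $\ell_j$ linear in $z$. Splitting the count according to which of the factors $z_j$ or $\ell_j(z)$ vanishes reduces to lower-dimensional subvarieties governed either by strictly smaller minors of the incidence matrix $\alpha$ or by linear conditions that can be counted directly; the induction hypothesis together with the established cases $N=3,4$ then closes the argument. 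The delicate point is that the incidence matrix $\alpha$ varies with $N$, so the induction must be performed uniformly in $\alpha$, and this will likely require an auxiliary combinatorial lemma classifying the relevant principal minors of $\alpha$ and showing that the restricted system inherits enough of the irreducibility/independence structure to permit descent.
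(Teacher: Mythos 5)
You should first note that the target statement is stated in the paper only as Conjecture~\ref{con_3.6}: the authors prove it for $N=3,4$ (Lemma~\ref{l:intersection}) and explicitly say in Remark~\ref{r_N>4} that they cannot prove it for larger $N$. Your framework — rescale, reduce modulo a prime $p\asymp RL$ so that reduction is injective on the integer box, then apply the finite-field Bezout bound of Theorem~\ref{bz} — is precisely the paper's Proposition~\ref{01}, and as there it reduces everything to the single claim that $\dim\tilde Q^{(p)}=(N-2)(d-1)$, i.e.\ that the $N-2$ quadrics $q^{(p)}_j$ intersect properly over $\bar{\mathbb F}_p$, uniformly in the admissible parameters $(z_1,v)$ and in the irreducible incidence matrix $\alpha$. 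You correctly identify this as the principal obstacle, but the induction you propose to close it does not work as described, so what you have is a programme rather than a proof.

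Concretely: on the stratum $\{z_j=0\}$ the equation $q_j=z_j\cdot\ell_j$ disappears and the remaining polynomials $q_i|_{z_j=0}$, $i\ne j$, are governed by $\alpha$ with row and column $j$ deleted; that minor need not be irreducible and may acquire zero rows, and if row $i$ of $\alpha$ is supported only on the deleted index (so $\alpha_{i1}=\alpha_{iN}=0$ and $\alpha_{ik}=0$ for $k\notin\{j\}$) then $q_i|_{z_j=0}$ vanishes identically, the stratum is cut by fewer equations than needed, and the dimension count fails for it. On the stratum $\{\ell_j=0\}$ the situation is worse: this is a single linear condition, not a coordinate block, so the restricted system is no longer of the bilinear form $z_i\cdot(\alpha z)_i$ and neither the induction hypothesis nor Lemmas~\ref{l:ind} and~\ref{l:irr} apply to it. The paper's proof for $N=4$ hinges on Lemma~\ref{l}, which characterizes exactly when \emph{two} linearly independent quadrics fail to intersect properly (they must share an affine linear factor); the authors point out this lemma has no analogue for three or more quadrics, and your proposal supplies no replacement. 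Linear independence plus irreducibility of the individual $q_j$ genuinely does not exclude a positive-dimensional excess intersection, which is precisely why the statement is left open; your ``auxiliary combinatorial lemma classifying the relevant principal minors'' is the entire missing content, not a technical afterthought.
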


One may try to prove this assertion using either arithmetical or analytical tools; cf. 
 Appendix~\ref{app:intersection} and  Remark~\ref{r_alg_set}. 
It is straightforward to see that, if the conjecture is true, then
Theorem~\ref{t:countingterms} holds for any $N$, so  in view of
Lemma~\ref{l:E-fin} any expected  value $L^N\EE a_s^{(m)}(\tau_1)\bar a_s^{(n)}(\tau_2)$ admits a uniform in $L$ upper
bound. 

\section{Quasisolutions}
\lbl{sec:quasisol}

In this section we start to study  a quasisolution $A(\tau)=A(\tau;L)$ of eq.~\eqref{ku4} with  $a_s(0) = 0$, 
which is the  second order truncations of series \eqref{decomp}:
\be\lbl{2nd trunc}
A(\tau) = (A_s(\tau), s\in\Z^d_L), \qquad A_s(\tau) = a^{(0)}_s(\tau) +\rho a^{(1)}_s(\tau) +\rho^2 a^{(2)}_s(\tau)\,.
\ee
We focus on its energy spectrum 
\be\lbl{energ_sp}
\nL{s} = \nL{s}(\tau)
= \EE |A_s(\tau)|^2,\quad s\in\Z_L^d, 
\ee
when $L$ is large and the parameter $\rho$ is chosen to be  $\rho=\eps L$. Our goal is to  show  that it approximately satisfies a
wave kinetic equation  (WKE).   Using Proposition~\ref{p:approx}, we will then show that the  same applies
to the quantities $\mathfrak n_{{ s,L}}$, considered in the Introduction.

The energy spectrum  $\nL{s}$ is a polynomial in $\eps$ of degree four, 
\be\lbl{n_s-decomp}
\nL{s}=\nL{s}^{(0)} + \eps \, \nL{s}^{(1)}+\eps^2 \nL{s}^{(2)} 
+ \eps^3 \nL{s}^{(3)} + \eps^4 \nL{s}^{(4)}, \qu s\in\Z_L^d,
\ee
where the terms $\nL{s}^{(k)}(\tau)$ are defined by
\be\label{n_s^(k)}
\nL{s}^{(k)}(\tau)= L^k\sum_{\substack{k_1+k_2=k \\ 0\leq k_1,k_2\leq 2}} \EE a^{(k_1)}_s(\tau) \bar a^{(k_2)}_s(\tau)\,. 
\ee
By Corollary~\ref{cor:Diag}, 
\begin{equation}\label{schw_ext}
\begin{split}
&\text{the second moments  $\EE a_s^{(k_1)} \bar a_s^{(k_2)}$
}\\
&\qquad\text{naturally extend to a Schwartz function of $s\in\R^d$, }
\end{split}
\end{equation} 
given by \eqref{final_Ea^ma^n}, \eqref{corr-sums}. Accordingly 
from now on we always regard  the second moments and the terms $\nL{s}^{(k)}(\tau)$ as Schwartz functions of $s\in\R^d$.

As customary in WT, we aim at considering the limit of $\nL{s}(\tau)$
as $L\to \infty$, that is, the limits of the terms $\nL{s}^{(j)}$.
The term $\nL{s}^{(0)} = \mathfrak{n}_{s,L}^{(0)}$  is given by
 \eqref{n_s_0} and is $L$-independent, while by a direct computation we see that 
 \be\lbl{n_s^1}
\nL{s}^{(1)} = 2\Re\EE \bar a_s^{(0)} a_s^{(1)}=0, \quad s\in\R^d,
\ee
 (here we use \eqref{an2}, the Wick theorem and \eqref{notethat}).
 Writing explicitly $\nL{s}^{(i)}$ with $2\leq i\leq 4$, we find that 
 \be\lbl{n^j_s}
 \begin{split}
\nL{s}^{(2)}=L^2\EE \big(|a_s^{(1)}|^2 + 2\Re \bar a_s^{(0)}
a_s^{(2)}\big)\,,\\
\nL{s}^{(3)}=2L^3\Re\EE\bo{s}\atw{s}\,, \qquad
\nL{s}^{(4)}=L^4\EE|\atw{s}|^2\,.
\end{split}
\ee
The function $\R^d\ni s\mapsto  \nL{s}^{(2)}(\tau)$ is made by two terms. By 
Corollary~\ref{cor:Diag} with $N=2$,  Theorem~\ref{t:numbertheory} applies to the both of them. Since $d\ge3$, we get
\be\label{decompp}
|\nL{s}^{(2)}(\tau) - n^{(2)}_s (\tau) | \le
C^\#(s)/L^{   1/2}\,,
\ee
where 
\begin{equation*}
\begin{split}
n^{(2)}_s (\tau): = C_d \Biggl(
\sum_{\gF\in\gF_{1,1}^{true}}   + 2 \Re\sum_{\gF\in\gF_{2,0}^{true}}\Biggr)
c_{\gF}
\int_{\Sigma_0}\mu^{\Sigma_0}(dz_1dz_2) 
\Phi_s^{\gF}(\tau,\tau, z) ,
\end{split}
\end{equation*}
and we have used estimate \eqref{est-Phi-F}. 
Thus, we see that the processes $\nL{s}^{(0)}$, $\nL{s}^{(1)}$ and $\nL{s}^{(2)}$ admit the limits
$$n_s^{(j)} (\tau):= \lim_{L\to\infty}\nL{s}^{(j)}(\tau;L).$$
The limits  satisfy  \eqref{decompp}, and  for all $\tau$
\be\label{bounds}
n_s^{(0)} (\tau) = B(s) \big( 1-e^{-2\ga_s(\tauz+\tau)}\big), \quad
n_s^{(1)}(\tau) = 0 ,\quad |n_s^{(2)}(\tau) |\,\le C^\#(s)\, ,
\ee
 where the last inequality  follows from Theorem~\ref{t:countingterms}.

 We do not know   if the 
 terms  $\nL{s}^{(3)}, \nL{s}^{(4)}$ admit limits as $L\to \infty$,  but  in view of Corollary~\ref{cor:Diag} both of them may 
 be estimated through Theorem~\ref{t:countingterms}:
 \be\label{n_s_34}
 |\nL{s}^{(3)}(\tau)|\le C^\#(s), \quad
 |\nL{s}^{(4)}(\tau)| \le   C^\#(s) \,, 
 \ee
 uniformly in $L\ge2$ and $\tau\ge0$. We then decompose
 $$
 \nL{s}=\nL{s}^{\leq 2}+\nL{s}^{\geq 3},
 $$
 where 
 $$
 \nL{s}^{\leq 2}=\nL{s}^{(0)}+ \eps\nL{s}^{(1)} +\eps^2 \nL{s}^{(2)} 
 \qnd
 \nL{s}^{\geq 3}=\eps^3 \nL{s}^{(3)} + \eps^4 \nL{s}^{(4)}
 $$
(we recall that $\nL{s}^{(1)} \equiv 0$),  and similarly define 
 $$n_{s}^{\leq 2}:=n_{s}^{(0)} +\eps^2 n_{s}^{(2)} .$$
 Due to \eqref{decompp},  
  \be\lbl{diff-limit<2}
 |n^{\le 2}_s(\tau) - \nL{s}^{\le 2}(\tau)| \le C^\#(s)\eps^2L^{-  1/2},
 \ee
 so by \eqref{n_s_34},
 \be\lbl{diff-limit}
 |n^{\le 2}_s(\tau) - \nL{s}(\tau)| \le C^\#(s)\eps^2(L^{-  1/2} + \eps).
 \ee
 Thus, the cut
  energy spectrum $n^{\le 2}_s$ governs the limiting as $L\to\infty$ behaviour of the energy spectrum $\nL{s}$ with precision $\eps^3 C^\#(s) $, 
  where we   regard the constant 
 $\eps\le1/2$ (which measures the size of
 solutions for \eqref{ku3} under the proper scaling) as a fixed small
 parameter. Accordingly, our next goal is to show that $n^{\le 2}_s(\tau)$ approximates the solution of a WKE.

\subsection{Increments of the energy spectra $n_s^{\leq 2}$.} \label{s_8.1}

In this section we will show that the process $ n_s^{\leq 2} (\tau)$ approximately satisfies a  WKE.
 We denote $s_4:=s$,  $\ga_j: =\ga_{s_j}$ and set 
\be\label{grr}
\ga_{1234} = \ga_{1}+\ga_{2}+\ga_{3}+\ga_{4}, \quad \vs = (s_1, s_2, s_3, s_4) \in   (\R^d)^4.
\ee

Now, for a fixed  $\tauz\ge 0$ and for $j=1,2,3,4$ we define the functions
$\cZ^j(\tau_0)=\cZ^j(\tauz;\vs)$  as  
\be\lbl{Z-edi}
  \cZ^j(\tauz;\vs) := \int_0^{\tauz}dl\,\e^{-\ga_j(\tauz-l)} \prod_{\substack{m=1,2,3,4
  \\ m \neq j}}
  \frac{\sinh(\ga_m
    l)}{\sinh(\ga_m \tauz)}\,\quad \text {if} \  \tau_0>0,
  \ee
   and $\cZ^j(0;\vs) = 0$. 
  Computing this integral we get  
  \be
  \begin{split}\lbl{Z-ed}
\cZ^j (\tauz;\vs)= &\left(\prod_{l\neq j}\frac{1}{1-e^{-2\ga_l\tauz}}\right)\cdot
\Biggl[\frac{1-e^{-\ga_{1234}\tauz}}{\ga_{1234}} - 
\frac{e^{-2(\ga_{1234}-\ga_j)\tauz}-e^{-\ga_{1234}\tauz}}{2\ga_j-\ga_{1234}}\\
&
 + \sum_{l\neq j}\left(  \frac{
    e^{-2(\ga_{1234}-\ga_j-\ga_l)\tauz}-e^{-\ga_{1234}\tauz}}{2(\ga_j+\ga_l) -
    \ga_{1234}} -\frac{
    e^{-2\ga_l\tauz}-e^{-\ga_{1234}\tauz}}{\ga_{1234}-2\ga_l} \right)
  \Biggr]\,,
\end{split}
\ee 
where each fraction  from the square brackets should be substituted by
$\tauz e^{-\ga_{1234}\tauz}$ if its  denominator vanishes. 

 For any
real number $r$ let $\Cc_r(\mathbb R^d)$ be the space of continuous
complex functions on $\mathbb R^d$ with the finite norm
\be\lbl{rnorm}
\left|f\right|_r = \left|f(z) \langle z\rangle^r\right|_{L_\infty}\,.
\ee
We naturally 
 extend this norm to $f\in L_\infty(\mathbb R^d)$ and set
\be\lbl{L_inf_r}
L_{\infty,r}(\mathbb R^d) = \left\{f\in L_\infty(\mathbb R^d) :
\left|f\right|_r < \infty\right\}.
\ee
Consider also the linear operator $\cL$,  given by 
\be\label{operL}
(\Lc v)(s) = 2\ga_s v(s), \quad s \in \R^d. 
\ee
 Below we often write the value $v(s)$ of a function $v$ at $s\in\R^d$ as $v_s$ and the function $v$ itself as $(v_s,\, s\in\R^d)$.
Now,  for  $v\in \Cc_r(\R^d)$, where  $r>d$, and for $\tauz\ge0$, 
 $\tau\in(0,1]$, 
we define  the kinetic integral 
 $
 K^\tau(\tauz)(v) =(K^\tau_s (\tauz)(v), \,s\in \R^d)
 $:
 \be\label{I_tau}
K^\tau(\tauz)(v) = \int_0^{\tau} e^{-t\Lc} K(\tauz)(v)dt\,.
\ee
Here the operator $K(\tauz)=K^1(\tauz)+\dots+K^4(\tauz)$ sends a function $v=(v_s, s\in\R^d)$ to the function 
\be\label{k11}
\begin{split}
	K_s &(\tauz)(v)
	=4C_d\int_{\Sigma_s} \mu^{\Sigma_s}( ds_1ds_2) 
        \Bigl(\cZ^4(\tauz;\vs) v_1v_2v_3 \\
         &+
	\cZ^3(\tauz;\vs) v_1v_2 v_4- \cZ^2(\tauz;\vs) v_1 v_3v_4
        -  \cZ^1(\tauz;\vs)
         v_2v_3v_4\Bigr)\\
        &=: \,  K^4_s (\tauz) (v) +  K^3_s (\tauz)(v) +  K^2_s (\tauz)(v) +
         K^1_s (\tauz)(v)
\end{split}
\ee
(note the reversed signs for $K^2$ and $K^3$). 
Here $v_j:= v(s_j)$, where $s_4=s$ and $s_3:= s_1+s_2-s_4$  (in view of the factor $\dep $).  While 
$\mu^{\Sigma_s}$ is the measure \eqref{meas}
 on the quadric 
$
\Sigma_s =\{  (s_1,s_2)\in \R^{2d}:\, (s_1-s)\cdot (s_2-s) =0\}$.
 Computing the integral in $t$ in \eqref{I_tau}, we find
\be\lbl{kin int^tau}
K^\tau_s (\tauz) (v)=  \frac{1-e^{-2\ga_s\tau}}{2 \ga_s} K_s(\tauz)(v)=
	 \frac{1-e^{-2\ga_s\tau}}{2\ga_s}\sum_{j=1}^4  K^j_s (\tauz)(v). 
\ee
We study the kinetic integral $K^\tau$ in Section~\ref{sec:kin_eq} while now we formulate a result  
which is the main step in deriving  the wave kinetic limit. 

\begin{theorem}\lbl{th:kinetic_for_n^2}
	For any $0<\tau\le1$  the function $(n^{\leq 2}_s, \, s\in \R^d)$ satisfies 
	\be\label{n_increment}
	\begin{split}
		n^{\leq 2}(\tauz+\tau)& = e^{-\tau \Lc} n^{\leq 2}(\tauz) +2 \int_0^\tau e^{-t\Lc} {b}^2\, dt 
		+\eps^2 K^\tau  (\tauz ) (n^{\le2}(\tauz))
		 +\eps^2\Rc,  \\
	\end{split}
	\ee
where $b^2 =( b^2(s) , s\in \R^d)$
 and   the remainder
$\Rc_s(\tau)$ satisfies  
	\be\lbl{R_s}
	|\Rc(\tau)|_r \le 
	 C_{r} \tau\, (\tau + \eps^2)\,,\quad \forall r.
	 \ee
\end{theorem}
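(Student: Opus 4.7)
The plan is to decompose $n^{\le 2} = n^{(0)} + \eps^2 n^{(2)}$ (using $n^{(1)}\equiv 0$ from \eqref{n_s^1}) and treat the two pieces separately. The linear part $n^{(0)}_s(\tau)=B(s)(1-e^{-2\ga_s\tau})$ satisfies $\dot n^{(0)}=-\Lc n^{(0)}+2b^2$ exactly, so $n^{(0)}(\tauz+\tau) = e^{-\tau\Lc}n^{(0)}(\tauz)+2\int_0^\tau e^{-t\Lc}b^2\,dt$ with no error, supplying the first two terms of \eqref{n_increment}. It therefore suffices to prove
\[
n^{(2)}(\tauz+\tau)-e^{-\tau\Lc}n^{(2)}(\tauz)=K^\tau(\tauz)(n^{(0)}(\tauz))+\Rc_1,\qquad |\Rc_1|_r\le C_r\tau^2,
\]
since replacing $n^{(0)}(\tauz)$ by $n^{\le 2}(\tauz)$ inside $K^\tau(\tauz)$ costs only $O(\eps^2\tau)$: the operator $K(\tauz):\Cc_r\to\Cc_{r+1}$ is continuous and $3$-homogeneous for $r>d$ (as is shown in Section~\ref{sec:kin_eq}), while $|n^{\le 2}(\tauz)-n^{(0)}(\tauz)|_r = \eps^2|n^{(2)}(\tauz)|_r \le C\eps^2$ by \eqref{bounds}.

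To establish the identity above, I would use the Feynman-diagram representation of $n^{(2)}_s$ furnished by Corollary~\ref{cor:Diag}: it is a finite sum, indexed by $\gF\in\gF_{1,1}^{true}\cup\gF_{2,0}^{true}$, of integrals $\int_{\Sigma_0}\Phi^\gF_s(\tauz+\tau,\tauz+\tau,z)\,\mu^{\Sigma_0}(dz)$, where each $\Phi_s^\gF$ is itself a double time integral $\int F_s^\gF(\tauz+\tau,\tauz+\tau,l,z)\,dl$ of products of Gaussian two-point kernels \eqref{corr()} and Duhamel exponentials. For every diagram I would partition each of the two time-integration domains according to whether $l_i\in[0,\tauz]$ or $l_i\in[\tauz,\tauz+\tau]$, yielding three cases. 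In case (a) both $l_i\in[0,\tauz]$, and--after factoring $e^{-2\ga_s\tau}$ from the outer Duhamel propagators using the identity $\EE a^{(0)}_s(\tauz+\tau)\bar a^{(0)}_{s'}(l)=e^{-\ga_s\tau}\EE a^{(0)}_s(\tauz)\bar a^{(0)}_{s'}(l)$ valid for $l\le\tauz$--the sum reassembles to exactly $e^{-\tau\Lc}n^{(2)}(\tauz)$. In case (b) both $l_i\in[\tauz,\tauz+\tau]$, giving an $O(\tau^2)$ contribution since each short integration supplies a factor $\le\tau$ against an integrand controlled uniformly by \eqref{cor_diag_est}. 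Case (c), with exactly one $l_i$ in the short interval, is the principal contribution.

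In case (c) the remaining inner time $l_j\in[0,\tauz]$ enters through Wick pairings with leaves at (or evolved from) time $\tauz$. The identities $\EE a^{(0)}_m(l_j)\bar a^{(0)}_m(\tauz)=2B_m e^{-\ga_m\tauz}\sinh(\ga_m l_j)$ for $l_j\le\tauz$ and $\EE|a^{(0)}_m(\tauz)|^2=2B_m e^{-\ga_m\tauz}\sinh(\ga_m\tauz)$ make $n^{(0)}_m(\tauz)$ appear explicitly, leaving the ratio $\sinh(\ga_m l_j)/\sinh(\ga_m\tauz)$ for each of the three ``inner'' legs. Integrating the resulting factor $e^{-\ga_k(\tauz-l_j)}\prod_{m\ne k}\sinh(\ga_m l_j)/\sinh(\ga_m\tauz)$ over $l_j\in[0,\tauz]$ produces precisely the weights $\cZ^k(\tauz;\vs)$ defined in \eqref{Z-edi}, while the short outer integration over $l\in[\tauz,\tauz+\tau]$ supplies the factor $\int_0^\tau e^{-t\Lc}\cdot\,dt$ of \eqref{I_tau}. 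The four operators $K^j$, $j=1,\dots,4$, in \eqref{k11} originate from four types of diagrams: $\cZ^4$ arises from $\EE|a^{(1)}_s|^2$ in $\gF_{1,1}^{true}$ (whose outer propagators both carry $\ga_s=\ga_4$), while $\cZ^1,\cZ^2,\cZ^3$ arise from the three sub-diagrams of $\EE\bar a^{(0)}_s a^{(2)}_s$ in $\gF_{2,0}^{true}$ distinguished by which of $s_1,s_2,s_3$ carries the inner (nested) Duhamel propagator in the expansion \eqref{an2} of $a^{(2)}$. Since the integral over $\Sigma_0$ with measure $\mu^{\Sigma_0}$ is already baked into the representation of $n^{(2)}$ via Theorem~\ref{t:numbertheory}/Corollary~\ref{cor:om-sum} applied in the defining limit $L\to\infty$, the measure $\mu^{\Sigma_s}$ appearing in \eqref{k11} is produced automatically.

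The main obstacle is the combinatorial bookkeeping: enumerating the diagrams in $\gF_{1,1}^{true}\cup\gF_{2,0}^{true}$, tracking the Wick contractions so that the three inner legs produce the three factors $n^{(0)}_m(\tauz)$ in each $K^j$ with the correct assignment of $j$ and $m$, and confirming that the coefficients $c_\gF\in\{\pm1,\pm i\}$ combine through the $2\Re$ prefactor of the $\gF_{2,0}^{true}$ contribution to reproduce the alternating signs $+,+,-,-$ in \eqref{k11}. Once this matching is established, the residual estimates in case (b) and from swapping $n^{(0)}(\tauz)\to n^{\le 2}(\tauz)$ inside $K^\tau(\tauz)$ are routine consequences of the uniform Schwartz bounds \eqref{cor_diag_est} on $F_s^\gF$ together with $|n^{(2)}(\tauz)|_r\le C^\#$ from \eqref{bounds}, yielding the claimed $|\Rc(\tau)|_r\le C_r\tau(\tau+\eps^2)$.
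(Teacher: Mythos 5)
Your proposal is correct and follows essentially the same route as the paper: your splitting of each Duhamel time into $[0,\tauz]$ versus $[\tauz,\tauz+\tau]$ is exactly the paper's decomposition $a^{(i)}(\tauz+\tau)=c^{(i)}(\tau)+\Del a^{(i)}(\tau)$ with $c^{(i)}=e^{-\ga_s\tau}a^{(i)}(\tauz)$, your case (a)/(b)/(c) correspond to the $c\bar c$, $\Del a\,\Del\bar a$ and cross terms of \eqref{Q_s-def2}, and your derivation of the kernels $\cZ^j$ from the ratios $\sinh(\ga_m l)/\sinh(\ga_m\tauz)$ is precisely the computation of Appendix~\ref{sec:Q_s-prop} (with $\cZ^4$ from $\EE\Del a^{(1)}\bar c^{(1)}$ and $\cZ^1,\cZ^2,\cZ^3$ from $\EE\Del a^{(2)}\bar a^{(0)}$), followed by the same application of Corollary~\ref{cor:om-sum} and the Lipschitz bound \eqref{k3}. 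The only cosmetic difference is that you apply the Heath-Brown limit at the start and manipulate the limiting densities, whereas the paper works with the finite-$L$ lattice sums and passes to the limit at the end; the error accounting is identical.
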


\subsection{Proof of Theorem \ref{th:kinetic_for_n^2}.}
\lbl{s:th_kinetic}
We first  fix a value for $L$ and decompose the processes
$\tau\mapsto a^{(i)}_s(\tauz+\tau)$, where $\tauz\ge0$ and $0\le
\tau\le  1$,
 as
\be\lbl{a-a_del}
a_s^{(i)}(\tauz+\tau) = c_s^{(i)}(\tau;\tau_0) + \Del a_s^{(i)}(\tau;\tau_0), \qquad i=0,1,2,\qu s\in \Z^d_L\,.
\ee
Here 
$$
c_s^{(i)}(\tau;\tau_0)= e^{-\ga_s\tau}a_s^{(i)}(\tauz)
$$
and $\Del a_s^{(i)}$ is defined via relation 
\eqref{a-a_del}. Below we write $ c_s^{(i)}(\tau;\tau_0)$ and $  \Del a_s^{(i)}(\tau;\tau_0)$ as 
$ c_s^{(i)}(\tau)$ and $  \Del a_s^{(i)}(\tau)$  since $\tau_0$ is fixed.

Obviously, 
$$
c(\tau) := c^{(0)}(\tau) +\rho  c^{(1)}(\tau) +\rho^2  c^{(2)}(\tau)
$$
with 
$\tau\ge0$ is a solution of the linear equation \eqref{ku4}${}_{\rho=0, b(s)\equiv 0}$, equal $A(\tauz)$ at $\tau=0$, and 
$
\Delta a(\tau) = \sum_{j=0}^2 \rho^j\Delta a^{(j)}(\tau)
$ 
equals $ A(\tauz+\tau) - c(\tau)$. By \eqref{schw_ext},  for $0\le i,j\le2$ 
 \begin{equation}\label{smooth_ext}
\begin{split}
\text{the functions }
&\EE c_s^{(i)} \bar c_s^{(j)} ,\quad \EE c_s^{(i)} \Delta \bar a_s^{(j)} ,\quad   \EE   \Delta  a_s^{(i)} \Delta \bar a_s^{(j)} 
\\
&\text{ naturally extend to  Schwartz functions of $s\in\R^d$. }
\end{split}
\end{equation} 
Due to \eqref{n_s^1} and \eqref{n^j_s}, 
$$e^{-2\ga_s\tau}\nL{s}^{\leq 2}(\tauz)
=\EE|c_s^{(0)}(\tau)|^2 + \rho^2\EE \big(|c_s^{(1)}(\tau)|^2 + 2\Re \bar c_s^{(0)}(\tau) c_s^{(2)}(\tau)\big),\quad \forall\, s\in \R^d.
$$ 
Then,
 \begin{equation}\label{Delta_A^2}
\begin{split}
\nL{s}^{\leq 2}&(\tauz+\tau)-e^{-2\ga_s\tau}\nL{s}^{\leq 2}(\tauz)
=\EE\Big( |a_s^{(0)}(\tauz+\tau)|^2  - |c_s^{(0)}(\tau)|^2 \\
	&+\rho^2  \big(|a_s^{(1)}(\tauz+\tau)|^2  - |c_s^{(1)}(\tau)|^2
	 +2\Re\big(a^{(2)}_s\bar a^{(0)}_s(\tauz+\tau)- c^{(2)}_s\bar c^{(0)}_s(\tau)\big)\Big).
\end{split}
\end{equation}
 Let us set 
 $$
\cY_s(u,v,w):=L^{-d} \sum_{1,2,3} \dess \delta(\omega^{12}_{3s}) 
u_1 v_2 \bar w_3\,.
$$  
Writing explicitly the  processes  
$
\Del a_s^{(i)}(\tau)
$,
$s\in\Z^d_L$, 
we find
\begin{equation}\lbl{deltas_a}
\begin{split}
	\Del a_s^{(0)}(\tau)&=
	b(s)\int_{\tauz}^{\tauz+\tau} e^{-\ga_s(\tauz+\tau-l)}\,d\beta_s(l), 
	\\
	\Del a_s^{(1)}(\tau)&=
	i\int_{\tauz}^{\tauz+\tau} e^{-\ga_s(\tauz+\tau-l)} \cY_s(a^{(0)})\,dl,
	\\ 
	\Del a_s^{(2)}(\tau)&=
	i\int_{\tauz}^{\tauz+\tau} e^{-\ga_s(\tauz+\tau-l)}
        \Biggl(\cY_s(a^{(0)},a^{(0)},a^{(1)})\\
        &\qquad\qquad+
        \cY_s(a^{(0)},a^{(1)},a^{(0)}) + \cY_s(a^{(1)},a^{(0)},a^{(0)})\Biggr)dl,
	\end{split}
\end{equation}
where $a^{(i)} = a^{(i)}(l)$.
Note that  to get explicit formulas for $c_s^{(i)}(\tau)$, $i=0,1,2$, it suffices to replace in the r.h.s.'s  of the relations in 
\eqref{deltas_a} the range of  integrating from $[\tauz,\tauz+\tau]$  to
$[0,\tauz]$.  For example,
$
c_s^{(0)}(\tau) = e^{-\ga_s\tau}  a_s^{(0)}(\tauz) = b(s)\int_{0}^{\tauz} e^{-\ga_s(\tauz+\tau-l)}\,d\beta_s(l). 
$

Using that 
$\EE c_s^{(i)}(\tau) \Del \bar a_s^{(0)}(\tau)=\EE c_s^{(i)}(\tau)  \,\EE\Del \bar a_s^{(0)}(\tau)=0$ for any $i$ and $s$, 
we obtain 
\be\lbl{aa0}
\EE \big(a_s^{(2)} \bar a_s^{(0)}(\tauz+\tau)  - c_s^{(2)} \bar c_s^{(0)}(\tau) \big)
=\EE \Del a_s^{(2)}(\tau) \bar a_s^{(0)}(\tauz+\tau) ,
\ee
and from \eqref{a-a_del} we get that 
\begin{equation}\lbl{a^2-a^2}
\begin{split}
&|a_s^{(1)}(\tauz+\tau)|^2  -  |c_s^{(1)}(\tau)|^2=|\Del a_s^{(1)}(\tau)|^2  + 2\Re \Del a_s^{(1)} \bar c_s^{(1)}(\tau),
	\\
&\EE\big( |a_s^{(0)}(\tauz+\tau)|^2  -   |c_s^{(0)}(\tau)|^2 \big)=\EE|\Del a_s^{(0)}(\tau)|^2.
	\end{split}
\end{equation}
Then, inserting \eqref{aa0} and \eqref{a^2-a^2} into \eqref{Delta_A^2}
and using that $\rho=\eps L$,
we find
\be\non
\nL{s}^{\leq 2}(\tauz+\tau)-e^{-2\ga_s\tau}\nL{s}^{\leq 2}(\tauz)
= \EE\big|\Del a_s^{(0)}(\tau)\big|^2 + \eps^2 Q_{s,L}(\tauz,\tau),\quad s\in\R^d, 
\ee
where 
\be\lbl{Q_s-def}
\begin{split}
Q_{s,L}(\tauz,\tau):=&L^2\Big(\EE |\Del a_s^{(1)}(\tau)|^2 +
2\Re\EE\big[\Del a^{(1)}_s(\tau) \bar c^{(1)}_s(\tau)\\
&+ \Del a^{(2)}_s(\tau) \bar a^{(0)}_s (\tauz+\tau)\big]\Big)\,,
\end{split}
\ee
and we recall \eqref{smooth_ext}. 
Since 
$$\displaystyle{\EE\big|\Del a_s^{(0)}(\tau)\big|^2=\frac{b(s)^2}{\ga_s}(1-e^{-2\ga_s\tau})} =
2 \int_0^\tau e^{-t\Lc} {b}^2(s)\, dt,
$$
 then 
\be\non
\nL{\cdot}^{\leq 2}(\tauz+\tau)-e^{-\tau\Lc}\nL{\cdot}^{\leq 2}(\tauz)= 2 \int_0^\tau e^{-t\Lc} {b}^2\, dt + \eps^2 Q_{ \cdot,L}(\tauz,\tau),
\ee
for $\nL{\cdot}^{\leq 2} = (\nL{s}^{\leq 2}$, $s\in \R^d)$.
In order to pass to the limit $L\to \infty$ we  recall the relation \eqref{diff-limit<2}.
Then the desired formula \eqref{n_increment} is an 
 immediate consequence  of  the assertion below:
\begin{proposition}\lbl{l:N^2_s}
  We have 
  \be\lbl{Q_s=}
 \lim_{L\to \infty} Q_{s,L}(\tauz,\tau) = K^\tau_s (\tauz) (n^{\le2}(\tauz))   +  \Rc_s(\tau), \quad s\in \R^d, 
	\ee
	where the remainder $\Rc$ satisfies \eqref{R_s}.
	\end{proposition}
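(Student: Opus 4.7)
The plan is to compute each of the three terms in the definition \eqref{Q_s-def} of $Q_{s,L}$ separately, extract the principal asymptotic as $L\to\infty$ via Corollary~\ref{cor:om-sum} (the quantitative Heath-Brown input), and verify that the principal parts assemble into $K^\tau_s(\tau_0)(n^{\le 2}(\tau_0))$ while the subdominant pieces meet the bound \eqref{R_s}.

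First I would substitute the Duhamel expressions \eqref{deltas_a} for $\Delta a_s^{(j)}$ together with the analogous explicit formulas for $c_s^{(j)}$ (obtained from \eqref{deltas_a} by replacing the integration range $[\tau_0,\tau_0+\tau]$ by $[0,\tau_0]$) into each piece of $Q_{s,L}$. Each piece then becomes a multiple-time integral of an expectation of a product of Gaussian variables $a^{(0)},\bar a^{(0)}$; by the Wick theorem these reduce to sums of products of the two-point correlation \eqref{corr_a_in_time}. Combined with the Kronecker factor $\delta_{s_4}^s$ and resonance factor $\delta(\omega^{12}_{3s})$ from the cubic interactions $\cY_s$, each contribution takes the form $L^{2(1-d)}\sum_{1,2,3}\delta'^{12}_{3s}\delta(\omega^{12}_{3s})f_s(s_1,s_2,s_3;l,\tau_0,\tau)$, with $f_s$ Schwartz in $(s_1,s_2,s)$ and uniformly bounded in the time parameters. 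Corollary~\ref{cor:om-sum} then converts each such sum to the quadric integral appearing in \eqref{k11}, up to a Heath-Brown error of order $L^{-(d-5/2)}C^\#(s)$ that vanishes in the limit.

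Next I would carry out the remaining time integrations. The mechanism is that the correlation \eqref{corr()}, rewritten for $l_1\le l_2$ as $B_m e^{-\gamma_m(l_2-l_1)}(1-e^{-2\gamma_m l_1})$, reconstitutes (up to exponentially small tails) the boundary datum $n^{(0)}_m(\tau_0)$ whenever $l_1$ is integrated over the past interval $[0,\tau_0]$, which happens for every leg originating from a $c$-factor. Meanwhile the memory kernels $e^{-\gamma_m(l_2-l_1)}$ with both endpoints in $[\tau_0,\tau_0+\tau]$ produce, after integration, precisely the functions $\cZ^j(\tau_0;\vec s)$ of \eqref{Z-edi}, one per placement of the distinguished index $s_4=s$ among the four legs of the cubic interaction. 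Summing the four placements with the signs dictated by which leg is conjugated reproduces the four-term structure of \eqref{k11} (including the sign reversal of $K^2,K^3$), and the outer factor $(1-e^{-2\gamma_s\tau})/(2\gamma_s)$ in \eqref{kin int^tau} arises from the final $e^{-\gamma_s(\tau_0+\tau-l)}$ integration. This produces the leading contribution $K^\tau_s(\tau_0)(n^{(0)}(\tau_0))$. The promotion $n^{(0)} \to n^{\le 2}$ inside $K^\tau$ comes from those subterms of $Q_{s,L}$ in which one past leg is of type $a^{(1)}$ rather than $a^{(0)}$; by \eqref{n^j_s} and \eqref{decompp} their net effect is to add $\eps^2 n^{(2)}_s(\tau_0)+O(L^{-1/2})$ to the past datum, supplying exactly the missing piece of $n^{\le 2}$.

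The principal obstacle is assembling and controlling the remainder $\Rc$ uniformly in $L$, so that it depends only on $\tau$ and $\eps$. Three classes of error need to be tracked: the Heath-Brown residues $L^{-(d-5/2)}C^\#(s)$ from each application of Corollary~\ref{cor:om-sum}, which disappear when $L\to\infty$; the $O(\tau^2)$ mismatch from the "present–present" part of the correlations (where both times lie in $[\tau_0,\tau_0+\tau]$) that does not feed into the principal $K^\tau$-structure, bounded by trivial estimates on the double integral over $[\tau_0,\tau_0+\tau]^2$; and the $O(\tau\eps^2)$ contributions produced by the $a^{(1)}$-cross terms in $\Delta a^{(2)}\bar a^{(0)}$, which after invoking the diagrammatic representation of Lemma~\ref{l:E-fin} and Corollary~\ref{cor:Diag} can be bounded uniformly in $L$ using Theorem~\ref{t:countingterms} applied to the resulting multi-quadric sums. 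All these bounds inherit Schwartz decay in $s$ from \eqref{cor_diag_est} and \eqref{est-Phi-F}, yielding $|\Rc(\tau)|_r\le C_r\tau(\tau+\eps^2)$ for every $r$, as required.
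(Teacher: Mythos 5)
Your overall architecture matches the paper's: expand $Q_{s,L}$ via Duhamel and Wick, recognize that the ``past'' time integrations over $[0,\tau_0]$ reproduce the factors $n^{(0)}(\tau_0)$ while the memory kernels yield the kernels $\cZ^j(\tau_0;\vs)$ of \eqref{Z-edi}, apply Corollary~\ref{cor:om-sum} to pass from the lattice sums to the quadric integrals, and bound the present--present pieces by $C^\#(s)\tau^2$. This is exactly the content of the paper's Proposition~\ref{l:N2} and the surrounding argument.

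However, one step is wrong as stated: the claim that the promotion of $n^{(0)}$ to $n^{\le 2}$ inside $K^\tau$ ``comes from those subterms of $Q_{s,L}$ in which one past leg is of type $a^{(1)}$, supplying exactly the missing piece of $n^{\le 2}$.'' The quantity $Q_{s,L}$ defined in \eqref{Q_s-def} contains no $\eps$ at all (the processes $a^{(j)}$ are $\eps$-independent; $\eps$ enters only through $\rho=\eps L$ in the quasisolution), so its $L\to\infty$ limit cannot intrinsically produce an $\eps^2 n^{(2)}$ correction to the argument of the kinetic operator. Moreover, the subterms you point to --- those of $\EE\,\Del a^{(2)}_s\bar a^{(0)}_s$ in which the $a^{(1)}$-leg is the past piece $c^{(1)}$ --- are precisely the ones that generate the $\cZ^1,\cZ^2,\cZ^3$ terms of the kinetic operator applied to $n^{(0)}$ (the factors $B_{12s}\cT^3$ and $B_{23s}\cT^1$ in the paper's computation); they do not shift the past datum. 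The correct and much simpler route is the paper's: the limit of $Q_{s,L}$ is $\tau K_s(\tau_0)(n^{(0)}(\tau_0))+O(\tau^2)$, and one then replaces $n^{(0)}$ by $n^{\le 2}$ using $|n^{(0)}-n^{\le 2}|\le C^\#(s)\eps^2$ (from \eqref{bounds}) together with the local Lipschitz estimate \eqref{k3} for $K(\tau_0)$, which costs exactly the $C_r\tau\eps^2$ admitted by \eqref{R_s}. That Lipschitz step is also the sole source of the $\eps^2$-part of the remainder; there are no ``$O(\tau\eps^2)$ cross terms'' inside $Q_{s,L}$ itself (its subterms are either leading order or $O(\tau^2)$). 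With this step repaired your argument coincides with the paper's.
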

\begin{proof} 
The first step in the proof of \eqref{Q_s=}  is the following result, established  in Appendix~\ref{sec:Q_s-prop}:
\begin{proposition}\lbl{l:N2}
	One has
	\be\lbl{l-N2}
	\big| Q_{s,L} (\tauz,\tau) - \cX_{ s,L} (\tauz,\tau) \big| \leq
        C^{\#}(s)\tau^2, \quad s\in \R^d\,, 
	\ee
	where  
	\be\lbl{ZZ_s}
\begin{split}
\cX_{s,L}(\tauz,\tau)
:=4L^{  2(1-d)}\tau\sum_{1,2,3}\dep \delta(\oms)
		\big(\cZ^4n^{(0)}_1n^{(0)}_2n^{(0)}_3
		+\cZ^3n^{(0)}_1n^{(0)}_2n^{(0)}_s\\
		-\cZ^1n^{(0)}_2n^{(0)}_3n^{(0)}_s- \cZ^2n^{(0)}_1n^{(0)}_3n^{(0)}_s\big)\,.
\end{split}
\ee
The terms $\cZ^j=\cZ^j(\tauz;s_1,s_2,s_3,s)$ are defined by
\eqref{Z-edi} and $n_{i}^{(0)}:=n_{s_i,L}^{(0)}(\tauz)$, 
$n_{s}^{(0)}:=n_{s,L}^{(0)}(\tauz)$.

\end{proposition}
By \eqref{bounds} $n_{i}^{(0)}= n_{s_i}^{(0)}$ are Schwartz functions in   $s_i$. Besides, 
 the functions $\cZ^j(\tauz,\vs)$ have at
  most polynomial growth in $\vs$ together with their derivatives,
   uniformly in $\tau_0\geq 0$: 
   
  \begin{lemma}\lbl{l:Z^j-growth}
    For any vector $\mu\in (\N\cup\{0\})^{4d}$, uniformly in $\tauz\ge 0$,    we have
    $$
\left| \p^{\mu}_{\vs} \cZ^j(\tauz,\vs)\right| \le P(\vs;\mu)\,,
$$
where $P(\vs;\mu)$  has at most polynomial growth in $\vs$.
  \end{lemma}
  By the lemma, which is proven in Section~\ref{s:Z^j-growth},
  $\cX_{s,L}$ satisfies the hypotheses of Corollary~\ref{cor:om-sum}. So 
\be\lbl{X-K0}
|\cX_{s ,L}(\tauz,\tau)
-  \tau K_s(\tauz) (n^{(0)}) |\leq C^\#(s)L^{-{  1/2}}\tau\,.
\ee
Next, note that
$ |n_s^{(0)}(\tauz)
-n_s^{\leq 2} (\tauz) |\leq
C^\#(s)\eps^2\, 
$
due to \eqref{bounds}. Then the estimate on the Lipschitz constants of
the operators $K^j(t)$, given in \eqref{k3}, implies that 
$$
\big|K(\tauz) (n^{(0)} (\tauz)) - K(\tauz) (n^{\le2}(\tauz) ) \big|_r \le  C_r \eps^2 \quad \forall\, r.
$$
So that 
\be\lbl{K0-K}
| \tau K_s(\tauz)(n^{(0)}(\tauz)) - \tau K_s(\tauz)( n^{\leq 2}(\tauz) )  |\leq C^\#(s)\tau \eps^2.
\ee
On the other hand, on account of the definition \eqref{kin int^tau},
for $0\le\tau\le 1$ we have the bound
\be\lbl{K-Kt}
\left| \tau K_s(\tauz) (n^{\leq 2}) -K^\tau_s(\tauz)(n^{\leq 2})\right|\leq
C\ga_s \tau^2 |K_s(\tauz)(n^{\leq 2})|\le C^\#(s)\tau^2\,,
\ee
where the last inequality follows from the estimate of the norm of the operator
$K^j(t)$, given in \eqref{k2}, and from \eqref{bounds}.

Putting together \eqref{l-N2}, \eqref{X-K0}, \eqref{K0-K}, \eqref{K-Kt} and letting
$L$ grow to infinity, we conclude the proof.
\end{proof}

\section{Kinetic equation}
\lbl{sec:kin_eq}
At this section we examine the  wave  kinetic equation
\be\label{k1}
\dot \zz_s (\tau) = -(\cL \zz)_s +\eps^2 K_s(\tau)(\zz) +2 b(s)^2, \qquad \tau\ge0, \;\; \zz(0)=0
\ee
($\cL$ is defined in \eqref{operL} and 
the operator $K=K^1+\dots +K^4$ is defined in \eqref{k11}), 
and next derive from this analysis and \eqref{n_increment} the proximity of $n_s^{\leq 2}(\tau)$ to a solution of \eqref{k1}. 
We will need the following result, which is Lemma~4.2 from \cite{DK1}: 
\begin{lemma} \lbl{l_4.2}
For $j, l=1,\dots, 4$ and $u^j \in \cC_r(\R^d)$ consider the  operators 
$$
J_l(u^1, \dots, u^4) (s) = 
 \int_{\Sigma_s} \mu^{\Sigma_s} \!(ds_1 ds_2) \prod_{i\ne l} u^i(s_i)
$$
(see \eqref{meas}), 
where $s_4=s$ and $s_3= s_1+s_2-s$. Then for each $l$, 
\be\label{4.2}
| J_l(u^1, \dots, u^4) |_{r+1} \le C_r \prod_{i\ne l} |u^i|_r 
 \qquad \text{if}\;\; r>d.
\ee

\end{lemma}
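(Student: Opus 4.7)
The plan is to transport the integral to one centered at the origin via translation, use the coarea formula to represent the measure as a delta function, and then split the domain of integration according to which of the translated points can be close to zero. By symmetry the case $l = 4$ is the worst: when $l \in \{1, 2, 3\}$ the pointwise bound $|u^l(s)| \le |u^l|_r \lan s\ran^{-r}$ immediately produces an extra factor $\lan s\ran^{-r}$, making the desired gain of one power automatic once the $l = 4$ estimate is in place.

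First I substitute $z_j = s_j - s$ for $j = 1, 2$, which turns $\Sigma_s$ into the fixed quadric $\Sigma_0 = \{z_1 \cdot z_2 = 0\}$, with $s_1 = s+z_1$, $s_2 = s+z_2$, $s_3 = s+z_1+z_2$. The coarea formula applied to $F(z_1, z_2) = z_1 \cdot z_2$, for which $|\nabla F| = (|z_1|^2 + |z_2|^2)^{1/2}$, identifies the measure $\mu^{\Sigma_0}$ with $\delta(z_1 \cdot z_2)\, dz_1\, dz_2$. Pointwise estimation $|u^i(s_i)| \le |u^i|_r \lan s_i\ran^{-r}$ then reduces the lemma to
$$I(s) := \int_{\R^{2d}} \delta(z_1\cdot z_2)\, \lan s+z_1\ran^{-r} \lan s+z_2\ran^{-r} \lan s+z_1+z_2\ran^{-r}\, dz_1\, dz_2 \le C_r \lan s\ran^{-(r+1)}.$$

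For $|s| \le 2$ it suffices to show $I(s) \le C_r$. This follows from the identity $\int \delta(z_1\cdot z_2) f(z_2)\, dz_2 = |z_1|^{-1} \int_{z_1^\perp} f$, the standard estimate $\int_{\R^{d-1}} \lan a+w\ran^{-r}\, dw \le C_r$ for $r > d-1$, and integrability of $|z_1|^{-1} \lan z_1\ran^{-r}$ on $\R^d$ when $d \ge 2$ and $r > d-1$. For $|s| \ge 2$ the crucial geometric observation is that at most one of the three points $s+z_1$, $s+z_2$, $s+z_1+z_2$ can lie in the ball of radius $|s|/10$: if two did, two of $z_1$, $z_2$, $z_1+z_2$ would be close to $-s$, forcing $|z_1\cdot z_2|$ to be of order $|s|^2$, in contradiction with $z_1\cdot z_2 = 0$. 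This lets me split $\Sigma_0$ into a \emph{generic} region, where all three factors $\lan s + \tilde z_i\ran$ are $\gtrsim \lan s\ran$, and three \emph{exceptional} regions $E_i$, $i\in\{1,2,3\}$, where $|s+\tilde z_i| \le |s|/10$. In an exceptional region, say $E_1$, one has $z_1 \approx -s$, so $|z_1|\sim |s|$ and the measure density $(|z_1|^2+|z_2|^2)^{-1/2}$ is bounded by $|s|^{-1}$. The orthogonal decomposition $|s+z_2|^2 = |P_{z_1}s|^2 + |P_{z_1^\perp}s + z_2|^2$ with $|P_{z_1}s|\sim |s|$ gives $\lan s+z_2\ran^{-r}\lesssim \lan s\ran^{-r}$; integrating $\lan s+z_1+z_2\ran^{-r}$ in $z_2$ over $z_1^\perp \cong \R^{d-1}$ and $\lan s+z_1\ran^{-r}$ in $z_1$ over the ball $\{|z_1+s|\le |s|/10\}$ produces bounded constants (for $r > d-1$ and $r > d$ respectively), yielding the bound $\lan s\ran^{-(r+1)}$ for the contribution of $E_1$. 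The generic region is handled by extracting one factor $\lan s\ran^{-r}$ from the pointwise bound $\lan s + z_1\ran^{-r} \le C \lan s\ran^{-r}$ and then estimating the remaining two-factor integral as in the $|s|\le 2$ analysis, gaining an additional $\lan s\ran^{-1}$ from the measure density since at least one of $|z_1|, |z_2|$ is $\gtrsim \lan s\ran$ throughout.

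The main obstacle is the bookkeeping in the exceptional regions: the single extra power $\lan s\ran^{-1}$ must arise exactly from the measure density at the concentration locus $z_1 \approx -s$, while the product of the other two pointwise factors must yield $\lan s\ran^{-r}$ through the orthogonal decomposition enforced by the quadric, and the convergence of all subintegrals must be secured without destroying this balance. This is where the hypothesis $r > d$ (rather than the softer $r > d-1$ needed for mere tangential convergence) plays an essential role, providing the margin required for absolute convergence of the radial $z_1$-integral over the small ball in each exceptional region.
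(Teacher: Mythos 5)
The paper does not actually prove this lemma: it is imported verbatim as Lemma~4.2 of \cite{DK1}, so there is no in-paper argument to compare against and I judge your proposal on its own terms. Your overall strategy --- translate to $\Sigma_0$, realize $\mu^{\Sigma_0}$ as $\delta(z_1\cdot z_2)\,dz_1dz_2$ by the coarea formula, and split according to which of the points $s+z_1$, $s+z_2$, $s+z_1+z_2$ can approach the origin --- is viable, and your treatment of $E_1$ (density $\le C|s|^{-1}$, orthogonal decomposition giving $\langle s+z_2\rangle^{-r}\le C\langle s\rangle^{-r}$ for $z_2\in z_1^\perp$, then a $(d-1)$-dimensional integral needing $r>d-1$ and a $d$-dimensional one needing $r>d$) is correct. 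However, two of the claims on which the remaining regions rest are false, and one region is not actually covered by your argument.

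First, the ``crucial geometric observation'' is wrong. Take $z_1=-s$ and $z_2=\epsilon e$ with $e\perp s$, $|e|=1$, $0<\epsilon\le |s|/10$: then $z_1\cdot z_2=0$ while $s+z_1=0$ and $s+z_1+z_2=\epsilon e$ \emph{both} lie in $B(0,|s|/10)$. Your argument that two vectors close to $-s$ force $|z_1\cdot z_2|\sim|s|^2$ only applies to the pair $(z_1,z_2)$; for the pair $(z_1,z_1+z_2)$ the cancellation $z_1\cdot z_2=z_1\cdot(z_1+z_2)-|z_1|^2$ destroys it. Consequently $E_3$ is \emph{not} obtained from $E_1$ ``by symmetry'': there one of $\langle s+z_1\rangle,\langle s+z_2\rangle$ may be $O(1)$, and the factor $|s|^{-1}$ must instead be extracted from the density via $(|z_1|^2+|z_2|^2)^{1/2}=|z_1+z_2|\ge 0.9|s|$ (using $z_1\perp z_2$ on the quadric), followed by a case distinction on which of $|z_1|,|z_2|$ is $\gtrsim|s|$ and which of $\langle s+z_1\rangle,\langle s+z_2\rangle$ supplies the $\langle s\rangle^{-r}$. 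Second, in the generic region it is \emph{not} true that one of $|z_1|,|z_2|$ is $\gtrsim\langle s\rangle$: small nonzero orthogonal $z_1,z_2$ give a generic point at which the density is large, so the extra $\langle s\rangle^{-1}$ cannot come ``from the measure density throughout''. The region is still fine, but for a different reason: after pulling out one factor $\langle s\rangle^{-r}$ one must prove $\int_{\R^d}|z_1|^{-1}\langle s+z_1\rangle^{-r}\,dz_1\le C\langle s\rangle^{-1}$, which requires splitting into $|z_1|\le1$, $1\le|z_1|\le|s|/2$ (contribution $O(\langle s\rangle^{d-1-r})$, so $r\ge d$ is used again here) and $|z_1|\ge|s|/2$. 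Finally, the cases $l\in\{1,2,3\}$ are not ``automatic'' from $l=4$: pulling out $|u^4(s)|\le|u^4|_r\langle s\rangle^{-r}$ leaves a two-factor integral that must still be shown to be $O(\langle s\rangle^{-1})$, not merely bounded --- this is exactly the estimate of the previous sentence, not a corollary of the three-factor bound. All of these defects are repairable with the tools you already deploy, but as written the proof does not close.
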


\subsection{Kinetic integrals.}\lbl{ss_kin_int}
We recall  notation \eqref{grr},  \eqref{Z-edi}. 

\begin{lemma} \lbl{l_k1}
For $j=1,\dots 4$, any $\tau\ge0$ and any $\vs=(s_1,\dots,s_4) \in(\R^d)^4$,

i)   $0\le \cZ^j(\tau;\vs)  \le \min(\tau, 1/\ga_{s_j})\le1$, 

ii) $| \cZ^j(\tau;\vs) -\cZ(\infty{;\vs})| \le  Ce^{-2\tau}$, where
$
\cZ(\infty;\vs) = 1/ \ga_{1234}. 
$	
\end{lemma}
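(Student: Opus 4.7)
The plan is to work directly from the integral representation \eqref{Z-edi}, with one change of variable that makes both assertions transparent. Throughout I use that $\ga_m\ge 1$ for every $m$, which follows from \eqref{gamma}.

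For part (i), positivity is immediate from \eqref{Z-edi} since the integrand is a product of positive functions on $(0,\tauz)$. For the upper bound I note that for $0\le l\le\tauz$ the function $\sinh$ is increasing on $[0,\infty)$, so each ratio $\sinh(\ga_m l)/\sinh(\ga_m \tauz)\le 1$. Hence
\[
\cZ^j(\tauz;\vs)\le \int_0^{\tauz} e^{-\ga_j(\tauz-l)}\,dl = \frac{1-e^{-\ga_j\tauz}}{\ga_j}\le \min\bigl(\tauz,\,1/\ga_j\bigr),
\]
and the final inequality $1/\ga_j\le 1$ follows from $\ga_j\ge 1$.

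For part (ii) I substitute $r=\tauz-l$ in \eqref{Z-edi} and expand each hyperbolic ratio as
\[
\frac{\sinh(\ga_m(\tauz-r))}{\sinh(\ga_m\tauz)} = e^{-\ga_m r}\,\frac{1-e^{-2\ga_m(\tauz-r)}}{1-e^{-2\ga_m\tauz}}.
\]
Collecting the exponentials gives the clean form
\[
\cZ^j(\tauz;\vs)=\int_0^{\tauz} e^{-\ga_{1234}\,r}\,\Pi_j(\tauz,r;\vs)\,dr,
\qquad
\Pi_j(\tauz,r;\vs):=\prod_{m\neq j}\frac{1-e^{-2\ga_m(\tauz-r)}}{1-e^{-2\ga_m\tauz}},
\]
whose limit as $\tauz\to\infty$ is exactly $\int_0^\infty e^{-\ga_{1234}r}\,dr=1/\ga_{1234}=\cZ(\infty;\vs)$.

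The main (and only) work is the uniform-in-$\vs$ quantitative error bound in (ii). I split the difference $\cZ^j(\tauz;\vs)-\cZ(\infty;\vs)$ into the tail $-\int_\tauz^\infty e^{-\ga_{1234}r}\,dr$, bounded by $e^{-\ga_{1234}\tauz}/\ga_{1234}\le e^{-4\tauz}/4$, and the bulk $\int_0^\tauz e^{-\ga_{1234}r}(\Pi_j-1)\,dr$. To control $\Pi_j-1$ I use the elementary inequality $|\prod_i(1+x_i)-1|\le e^{\sum|x_i|}-1$ together with $\ga_m\ge 1$: each factor of $\Pi_j$ differs from $1$ by a quantity bounded by $C\bigl(e^{-2\ga_m(\tauz-r)}+e^{-2\ga_m\tauz}\bigr)\le C\bigl(e^{-2(\tauz-r)}+e^{-2\tauz}\bigr)$, which yields $|\Pi_j-1|\le C'\bigl(e^{-2(\tauz-r)}+e^{-2\tauz}\bigr)$ uniformly in $\vs$. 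Integrating against $e^{-\ga_{1234}r}$ gives both terms bounded by a constant times $e^{-2\tauz}$ (the second trivially, the first because $\int_0^\tauz e^{-\ga_{1234}r}e^{-2(\tauz-r)}\,dr\le e^{-2\tauz}\int_0^\tauz e^{(\ga_{1234}-2)(-r)}\cdot e^{2\tauz}\cdot e^{-\ga_{1234}r}\,\dots$ simplifies to $\le Ce^{-2\tauz}$ since $\ga_{1234}\ge 4>2$). Finally, for the small-$\tauz$ regime the bound is trivial: part (i) together with $\cZ(\infty;\vs)\le 1/4$ yields $|\cZ^j(\tauz;\vs)-\cZ(\infty;\vs)|\le 5/4$, which is absorbed into $Ce^{-2\tauz}$ for any $\tauz$ in a compact interval by enlarging $C$.

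The only subtle point I expect is the exact exponent $2$ in the decay rate $e^{-2\tau}$: this is exactly the rate dictated by the slowest-decaying factors $1-e^{-2\ga_m\tauz}$ in the denominator of $\Pi_j$, and requires invoking $\ga_m\ge 1$ rather than the sharper estimate $\ga_{1234}\ge 4$ that governs the tail.
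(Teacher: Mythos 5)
Your proof is correct. Part (i) is identical to the paper's argument (monotonicity of $\sinh$ gives $0\le \sinh(\ga_m l)/\sinh(\ga_m\tau)\le 1$, then the elementary bound on $\int_0^\tau e^{-\ga_j(\tau-l)}dl$), but for part (ii) you take a genuinely different route. The paper first computes the integral \eqref{Z-edi} in closed form, obtaining \eqref{Z-ed}, and then for $\tau\ge1$ reads off the decay rate $e^{-2\tau}$ from the exponents $2(\ga_{1234}-\ga_j)$, $2(\ga_{1234}-\ga_j-\ga_l)$, $2\ga_l$ and $\ga_{1234}$, all of which are $\ge2$ since each $\ga_m\ge1$; for $0\le\tau\le1$ it uses exactly your trivial triangle-inequality bound via part (i). Your substitution $r=\tau-l$ together with the factorization $\sinh(\ga_m(\tau-r))/\sinh(\ga_m\tau)=e^{-\ga_m r}\,(1-e^{-2\ga_m(\tau-r)})/(1-e^{-2\ga_m\tau})$ avoids the explicit formula entirely: it makes the limit $1/\ga_{1234}$ transparent and sidesteps the case distinctions over vanishing denominators that \eqref{Z-ed} forces on the paper. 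The trade-off is that your bulk estimate $|\Pi_j-1|\le C(e^{-2(\tau-r)}+e^{-2\tau})$ needs the denominators $1-e^{-2\ga_m\tau}$ bounded away from zero, i.e. it is only uniform for $\tau$ bounded below; you do cover small $\tau$ by the trivial bound at the end, but it would be cleaner to state up front that the quantitative argument is run for, say, $\tau\ge1$. Also, the displayed chain of inequalities for $\int_0^\tau e^{-\ga_{1234}r}e^{-2(\tau-r)}\,dr$ is garbled as written; the correct one-line computation is $e^{-2\tau}\int_0^\tau e^{-(\ga_{1234}-2)r}\,dr\le e^{-2\tau}/(\ga_{1234}-2)\le e^{-2\tau}/2$, using $\ga_{1234}\ge4$. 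Neither point is a gap, and since the paper needs \eqref{Z-ed} anyway (it is quoted in the statement of the kinetic operator), its shorter proof is the natural one in context, while yours is the more self-contained.
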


 \begin{proof}
 The first assertion follows from \eqref{Z-edi} since $\sinh(x)$ is an increasing non-negative 
  function of $x\ge0$, so in the integrand in \eqref{Z-edi} we have 
  $\ 0\le  \sinh(\ga_m l) / \sinh(\ga_m \tau' )\le 1$. For   $0\le \tau\le1$ 
  the second estimate  follows from   the first one   as
  $$
  |\cZ^j(\tau;\vs) - \cZ(\infty;\vs) | \le |\cZ^j(\tau;\vs) |+ | \cZ(\infty;\vs) | ,
  $$
  while for $\tau\ge1$ it  follows from \eqref{Z-ed} since
   $\ga_{123j} - \ga_j \ge1$ and  $\ga_{1234} - \ga_j -\ga_l \ge1$ for $j,l\in \{1,2,3,4\}, j\ne l$.
  \end{proof}
 
 Since the kernels $\cZ^j$ are non-negative by the first assertion of the lemma above, then denoting 
 $
 \kappa_1=\kappa_2=1,$ $\kappa_3=\kappa_4 =-1
 $
 we achieve that the operators $\kappa_j K^j$, $1 \le j\le 4$, are positive (in the sense that they send positive functions to positive). Due to  the first assertion of the lemma and \eqref{4.2}, 
 for any $\tau\ge0$ they  define positive  3-homogeneous mappings 
 $
 \cC_r(\R^d) \to \cC_{r+1} (\R^d)
 $
 if $r>d$, and 
 \be\label{k2}
  |\kappa_j K^j (\tau)(v)|_{r+1} =
| K^j (\tau)(v)|_{r+1} \le C_r  \min(\tau, 1) |v|_r^3, \quad j=1,\dots, 4, 
\ee
 for $\tau\ge0$. So the mappings $ K^j (\tau)$   are locally Lipschitz:
 \be\label{k3}
| K^j (\tau)(v^1) - K^j (\tau)(v^2) 
|_{r+1} \le 3C_r  \min(\tau, 1) R^2 |v^1-v^2|_r,  \quad \text{if} \;\; |v^1|_r, |v^2|_r \le R. 
\ee
 Since for $j=1,\dots,4$ and   any $s\in  \R^d$, 
 
 $\bullet$ for non-negative functions $n,m\in L_{\infty, r}$  (see \eqref{L_inf_r})  such that $m\le n$ we have 
  $\kappa_j K^{j}_s (\tau)(m) \le \kappa_j  K^{j}_s (\tau)(n)\le\infty$,
 
   $\bullet$     $|K^{j}_s (\tau)(v) | \le  \kappa_j
   K^{j}_s (\tau)(|v|) \le\infty $ for any complex function $v\in L_\infty$,
  
   $\bullet$ $|v_s| \le |v|_r \lan s\ran^{-r}$ for all $v\in L_{\infty, r}$, 
  
  \noindent 
  then the relations \eqref{k2},  \eqref{k3} remain true  for functions from $L_{\infty, r}$.

\begin{lemma} \lbl{l_k2}
If $|s_l | \le R$ for $l=1,\dots,4$, then 
\be\lbl{k5} 
\left| \frac{\p}{\p\tau} \cZ^j(\tau;\vs)\right| \le  C \ga^0(R^2)
\ee
(see \eqref{gamma}). 
\end{lemma}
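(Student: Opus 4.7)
My plan is to differentiate the integral representation \eqref{Z-edi} directly in $\tau$ and control the resulting expression with the help of the pointwise bound of Lemma~\ref{l_k1}(i). Differentiating under the integral (and picking up a boundary term at $l=\tau$, where the integrand equals $1$) should yield the compact identity
\begin{equation*}
\partial_\tau \cZ^j(\tau;\vs) = 1 - \Big(\gamma_j + \sum_{m\neq j}\gamma_m\coth(\gamma_m\tau)\Big)\,\cZ^j(\tau;\vs),
\end{equation*}
where the sum comes from logarithmically differentiating $\prod_{m\neq j}\sinh(\gamma_m\tau)^{-1}$ and the $\gamma_j$ from differentiating $e^{-\gamma_j(\tau-l)}$ under the integral.

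The first two summands on the right are harmless: $1$ is trivially bounded, and $\gamma_j\cZ^j \leq 1$ by Lemma~\ref{l_k1}(i). The real task is to bound each of the three mixed terms $\gamma_m \coth(\gamma_m \tau)\cZ^j$ with $m\neq j$. The natural strategy is to split into the regimes $\gamma_m\tau \leq 1$ and $\gamma_m\tau \geq 1$. In the first regime, the elementary estimate $x\coth x \leq C$ for $x\in(0,1]$ reduces matters to controlling $\cZ^j/\tau$, which is $O(1)$ by Lemma~\ref{l_k1}(i) (use $\cZ^j \leq \tau$ when $\tau \leq 1/\gamma_j$, and $\cZ^j \leq 1/\gamma_j$ together with $1/\tau \leq \gamma_j$ when $\tau \geq 1/\gamma_j$). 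In the second regime, $\coth(\gamma_m\tau)$ is bounded by an absolute constant, while $\gamma_m \leq \gamma^0(R^2)$ by the monotonicity of $\gamma^0$, and $\cZ^j \leq 1$ finishes the job. Summing over $m$ and recalling $\gamma^0 \geq 1$ will give $|\partial_\tau \cZ^j| \leq C\gamma^0(R^2)$ uniformly in $\tau \geq 0$.

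The only subtle point is the blow-up of $\gamma_m\coth(\gamma_m\tau)$ like $1/\tau$ as $\tau\to 0^{+}$, and that will be the main thing to watch. It is exactly compensated by the linear vanishing $\cZ^j \lesssim \tau$ provided by Lemma~\ref{l_k1}(i), so the product stays bounded; thus the estimate works only because the two factors are paired rather than estimated separately. Once this pairing is observed, the remainder of the argument is routine case analysis.
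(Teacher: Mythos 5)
Your proposal is correct and follows essentially the same route as the paper: differentiate \eqref{Z-edi} under the integral sign (plus the boundary term), reduce to bounding $\ga_m\coth(\ga_m\tau)$ against $\cZ^j$, and split into the regimes where $\ga_m\tau$ is small (using $\cZ^j\le\tau$ from Lemma~\ref{l_k1}(i) to cancel the $1/\tau$ blow-up) or large (using $\ga_m\le\ga^0(R^2)$ and $\cZ^j\le1$). The paper phrases the key step as the pointwise bound $|\p_\tau(\sinh\ga_m l/\sinh\ga_m\tau)|\le\ga_m C\max(1,1/(\ga_m\tau))$ and splits on $\tau\ge1$ versus $\tau<1$, but this is the same cancellation you identify.
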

 
 \begin{proof}
 For any $m\in\{ s_1,\dots, s_4\}$ and $0\le l\le \tau$ 
 we have 
 $$
\left| \frac{\p}{\p\tau} \, \frac{\sinh \ga_m l}{ \sinh \ga_m \tau}  \right| \le \ga_m \, \frac{\cosh \ga_m \tau}{ \sinh \ga_m \tau} \le
\ga_m C\max(1, 1/ (\ga_m \tau)). 
 $$
 Considering separately the cases $\tau\ge1$ and $0\le \tau<1$, using \eqref{Z-edi} and 
  the estimate above we get the result.
 \end{proof} 
 
 This lemma implies that for any $v\in \cC_r(\R^d)$ and any $j$ the curve $\tau\mapsto K^j(\tau)(v)\in \cC_r(\R^d)$ is H\"older-continuous:

\begin{lemma} \lbl{l_k3}
For any $\tauz \ge0$, $0\le \tau\le1$, $j=1,\dots, 4$ and  
any $r>d+1$, 
\be\lbl{k8} 
| K^j(\tauz+\tau)( v)  - K^j(\tauz)(v) |_r \le C_r |v|_r^3 \tau^{\kappa_*} \qquad\forall\, v\in \cC_r(\R^d), 
\ee
where 
 $ \kappa_* = 1/( 1+2 r_*)$. 
\end{lemma}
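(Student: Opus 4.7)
\medskip

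\noindent\textbf{Proof plan.} The plan is to write the difference $K^j(\tauz+\tau)(v)-K^j(\tauz)(v)$ pointwise in $s$ as an integral over the quadric $\Sigma_s$ of the kernel factor $\cZ^j(\tauz+\tau;\vs)-\cZ^j(\tauz;\vs)$ against the triple product of $v$'s, and then to estimate this kernel difference in two complementary ways. On the one hand, by the mean value theorem and Lemma~\ref{l_k2}, whenever all four components of $\vs$ satisfy $|s_l|\le R'$ one has
\[
|\cZ^j(\tauz+\tau;\vs)-\cZ^j(\tauz;\vs)|\le C\tau\,\ga^0((R')^2)\le C'\tau\,(R')^{2r_*},
\]
using \eqref{gamma}. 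On the other hand, Lemma~\ref{l_k1} gives the uniform bound $|\cZ^j(\tauz+\tau;\vs)-\cZ^j(\tauz;\vs)|\le 2$ without any size restriction on $\vs$. The strategy is then to balance these two bounds by introducing a cutoff $R\ge 1$ (to be optimized at the end) and splitting the $s$-variable and the $(s_1,s_2)$-integration accordingly.

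\medskip

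First I would dispose of the large-$s$ case: when $|s|>R$, the difference is bounded by $2|K^j(\tauz)(v)|_{r+1}\langle s\rangle^{-(r+1)}$, so by \eqref{k2} its $\cC_r$-contribution is $\le 2C_r R^{-1}|v|_r^3$. For $|s|\le R$, I would split $\Sigma_s$ into
\begin{itemize}
\item[(A)] the bounded piece where $|s_1|,|s_2|\le 2R$ (and hence $|s_3|=|s_1+s_2-s|\le 5R$, $|s_4|=|s|\le R$), and
\item[(B)] its complement, where $\max(\langle s_1\rangle,\langle s_2\rangle,\langle s_3\rangle)>2R$.
\end{itemize}
On (A) I would apply the first (MVT) bound to the kernel together with Lemma~\ref{l_4.2} applied at the original weight $r>d$, yielding a contribution $\le C_r\tau R^{2r_*}|v|_r^3\langle s\rangle^{-(r+1)}$; after multiplying by $\langle s\rangle^r$ this gives $\le C\tau R^{2r_*}|v|_r^3$.

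\medskip

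The delicate step will be region (B), where I intend to trade the decay of $v$ against the cutoff. On (B) the product $\prod_{i=1}^{3}\langle s_i\rangle$ exceeds $2R$, so for any $\beta\in[0,3(r-d))$ one has
\[
\prod_{i=1}^{3}\langle s_i\rangle^{-r}\le (2R)^{-\beta/3}\prod_{i=1}^{3}\langle s_i\rangle^{-(r-\beta/3)}.
\]
Applying Lemma~\ref{l_4.2} at the reduced weight $r':=r-\beta/3>d$ and using the uniform kernel bound, the (B)-contribution is $\le C_{r'}R^{-\beta/3}|v|_r^3\langle s\rangle^{-(r'+1)}$, whose $\cC_r$-norm is $\le C_{r'}R^{-\beta/3}|v|_r^3\sup_s\langle s\rangle^{\beta/3-1}$. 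This is uniformly bounded precisely when $\beta\le 3$, and here the hypothesis $r>d+1$ is exactly what allows the choice $\beta=3$ (yielding $r'=r-1>d$), so that (B) contributes $\le C_rR^{-1}|v|_r^3$. This is the step where the assumption on $r$ gets used sharply and is where I expect the only genuine subtlety.

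\medskip

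Collecting the three contributions gives
\[
|K^j(\tauz+\tau)(v)-K^j(\tauz)(v)|_r\le C_r|v|_r^3\bigl(\tau R^{2r_*}+R^{-1}\bigr).
\]
Optimizing by balancing $\tau R^{2r_*}=R^{-1}$, i.e.\ choosing $R=\tau^{-1/(1+2r_*)}$ (which is $\ge 1$ since $\tau\le 1$), the right-hand side becomes $2C_r|v|_r^3\,\tau^{1/(1+2r_*)}=2C_r|v|_r^3\,\tau^{\kappa_*}$, proving \eqref{k8}. \qed
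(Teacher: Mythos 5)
Your proof is correct in outline and is a close cousin of the paper's argument: both proofs play a bound of size $O(\tau R^{2r_*})$ on a region where all four arguments of $\cZ^j$ are $O(R)$ against a uniform $O(1)$ kernel bound paired with an $O(R^{-1})$ gain from the weight, and both then optimize $R=\tau^{-1/(1+2r_*)}$; both also spend the hypothesis $r>d+1$ in the same place, namely to drop one unit of the weight $r$ and convert it into the factor $R^{-1}$. The difference is where the cutoff lives: the paper truncates the \emph{function}, setting $v^R=v\chi_{|s|\le R}$, handles $v-v^R$ via the Lipschitz bound \eqref{k3} at weight $r-1>d$ (using $|v-v^R|_{r-1}\le R^{-1}$), and only then invokes Lemma~\ref{l_k2} on the remaining piece (where the indicator $\chi_{\{|s_j|\le R\,\forall j\}}$ appears automatically because $v^R$ vanishes outside the ball); you instead truncate the \emph{integration domain} $\Sigma_s$ directly. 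Your route avoids introducing $v^R$, at the price of a case analysis that you have not fully carried out.

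That is where the one genuine gap sits. Your region-(B) estimate rests on the claim that the product of the three weights $\langle s_i\rangle$ actually appearing in the integrand exceeds $2R$. For $K^4$ and $K^3$ the product runs over an index set containing both $1$ and $2$, so this follows at once from $\max(|s_1|,|s_2|)>2R$. But for $K^1$ the integrand is $v_2v_3v_4$ and for $K^2$ it is $v_1v_3v_4$: the index realizing the maximum may be precisely the one that is \emph{omitted}, while $\langle s_4\rangle=\langle s\rangle\le R$ gives nothing, so the inequality you invoke does not follow from the definition of (B). The claim is nevertheless true up to a constant: since $s_3-s_2=s_1-s$, Peetre's inequality gives $\langle s_2\rangle\langle s_3\rangle\ge c\,\langle s_1-s\rangle\ge c(|s_1|-|s|)\ge cR$ whenever $|s_1|>2R$ and $|s|\le R$ (and symmetrically with $1\leftrightarrow2$), so in every case $\prod_{i\ne j}\langle s_i\rangle\ge cR$ on (B) and your computation goes through with $2R$ replaced by $cR$. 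You should add this observation; without it the argument as written only covers $j=3,4$.
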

 
 \begin{proof}  
 By the homogeneity we may assume that $|v|_r =1$. 
 For $R\ge1$ let us set 
 $
 v^R(s) = v(s) \chi_{|s|\le R} \in L_{\infty}. 
 $
Then
\be\lbl{k7} 
|v^R|_r \le 1, \quad |v- v^R|_{r-1} \le  R^{-1}. 
\ee
Now let us write the increment $ K^j(\tauz+\tau)( v)  - K^j(\tauz)(v) $ as 
\[ \begin{split} 
 \big( K^j(\tauz+\tau)( v)  - K^j(\tauz+\tau)( v^R) \big) + \big( K^j(\tauz+\tau)( v^R)  - K^j(\tauz)( v^R) \big)  \\
 + \big( K^j(\tauz)( v^R)  - K^j(\tauz)(v) \big) =: \Delta_1 +  \Delta_2 + \Delta_3.
 \end{split} 
\]
Recalling that \eqref{k2} and  \eqref{k3} hold for functions from $L_{\infty,r'}$ with $r'>d$, we get from \eqref{k7} that 
$
|\Delta_1 |_r + |\Delta_3 |_r \le C_r  R^{-1}. 
$
To estimate $\Delta_2$ we 
 set $ {\Delta_2^R} = {\Delta_2}\chi_{|s| \le R}$. Since  by \eqref{k2}, 
$
|{\Delta_2}|_{r+1} \le 2 C_r, 
$
then
$
|{\Delta_2} - {\Delta_2^R}|_{r} \le 2 C_r  R^{-1}. 
$
For $|s|>R$ the function $ {\Delta_2^R}$ vanishes, while for $|s|\le R$
 in view of Lemma~\ref{l_k2}  we have
\[\begin{split}
|{\Delta^R_2}_s| &= |{\Delta_2}_s|=
\big| K^j_s (\tauz+ \tau)( v^R) -  K^j_s (\tauz)( v^R)  \big| \\
& \le C_r \int_{\Sigma_s} \mu_s(dv_1 dv_2) 
\big| \cZ^j(\tauz+\tau;  \vs) -  \cZ^j(\tauz;  \vs) \big|  \frac{|v_1|  \dots |v_4|}{|v_j|} 
\chi_{\{|s_j|\le R\ \forall j\}}\\
& \le C_{1r} \ga^0(R^2) \tau   \int_{\Sigma_s} \mu_s(dv_1 dv_2)  \frac{|v_1|  \dots |v_4|}{|v_j|} 
\le C_{2r} \lan s\ran^{-r-1} 
 \tau R^{2r_*},
\end{split}
\]
where to get the last inequality we used \eqref{4.2}. 
We have seen that the $\cC_r$-norm of the increment is bounded by 
$
C_r (R^{-1}+ \tau R^{2r_*}),
$
for any $R\ge1$. Choosing $R= \tau^{-1/(1+2r_*)}$ we achieve \eqref{k8}. 
 \end{proof} 
 
 \subsection{Kinetic equation}\lbl{ss_kin_eq}
 Now we will apply the obtained results to  the kinetic equation \eqref{k1}. Since the function $b(\cdot)^2 := \{ b(s)^2\} 
  \in \cC_r(\R^d)$ for
 all $r$, since $\cL$ is the operator of multiplying by the function $2\ga_s$ as in \eqref{diss_op}, 
  \eqref{gamma}, and the operator $K$ satisfies 
 \eqref{k2},  \eqref{k3}, then for small enough $\eps>0$  eq.~\eqref{k1} has a unique solution, belonging to 
  $\cC_r(\R^d)$ for each $r$, which in a Lipschitz way depends on the r.h.s. of the equation, when the latter 
    deviates from $b(\cdot)^2 $. 
  Namely, the following result, where $X^r$ stands for the space
  $C(0,\infty; \cC_r(\R^d))$,   given the norm
  $
  |v(\cdot)|_{X^r} = \sup_{ t\ge0} |v(t)|_r, 
  $
  may be easily verified (a proof of a similar fact may be found in Section~4 of \cite{DK1}).

\begin{lemma} \lbl{l_k4}
	 For any $r>d$,
	
1) There exists $\eps_*$, depending on $b(\cdot), r$ and $r_*$, 
 such that for $0\le \eps \le \eps_*$ eq.~\eqref{k1} has a unique solution $\zz(\tau)$, belonging to $X^r$.
  It satisfies 
\be\lbl{k99}
|\zz|_{X^r} \le C_r | b^2|_r.
\ee

2) If $\zz^0(\tau)$ is a solution of the linear equation 
 \eqref{k1}$\!\!{}\mid_{\eps=0}$, then $| \zz -\zz^0|_{X^r} \le C_r \eps^2$. 
If a curve $ \zz'(\tau)$ solves 
 \eqref{k1} with $2b(s)^2$ replaced by $2b(s)^2 +\xi_s(t)$, where $\xi\in X^{{r}}$ and $|\xi|_{{X^r}} \le1$, then 
 $| \zz -\zz'|_{X^{{r}}} \le C_{{r}} | \xi|_{X^{{r}}} $.
\end{lemma}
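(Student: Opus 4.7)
The plan is a standard Banach fixed point argument, with the estimates \eqref{k2} and \eqref{k3} carrying all of the nonlinear analysis. First, I would rewrite \eqref{k1} in Duhamel form: a function $\zz \in X^r$ solves \eqref{k1} if and only if it is a fixed point of
\[
\Phi(\zz)(\tau) \;=\; 2\int_0^\tau e^{-(\tau-t)\Lc} b^2\, dt \;+\; \eps^2 \int_0^\tau e^{-(\tau-t)\Lc} K(t)(\zz(t))\, dt.
\]
Recall that $e^{-t\Lc}$ acts as pointwise multiplication by $e^{-2\ga_s t}$ and that $\ga_s \ge 1$, so
\[
\Big| \int_0^\tau e^{-(\tau-t)\Lc} f(t)\, dt\Big|_r \;\le\; \sup_{s\in\R^d} \tfrac{1}{2\ga_s}\cdot |f|_{X^r} \;\le\; \tfrac{1}{2}\,|f|_{X^r}.
\]
Applying this to $f\equiv 2b^2$ shows that the inhomogeneous term has $X^r$-norm at most $|b^2|_r$.

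For the nonlinear term, the key point is that by \eqref{k2}, $K(t)(\zz(t))$ lies in $\cC_{r+1}$ with norm bounded by $C_r|\zz(t)|_r^3$, gaining one order of decay in $s$. Estimating the Duhamel integral pointwise and using $\ga_s \ge 1$, I would obtain
\[
\Big|\eps^2 \int_0^\tau e^{-(\tau-t)\Lc}K(t)(\zz(t))\,dt\Big|_r \;\le\; \eps^2\,C_r\,|\zz|_{X^r}^3,
\]
uniformly in $\tau\ge 0$. A completely analogous argument based on \eqref{k3} yields the Lipschitz bound
\[
|\Phi(\zz^1)-\Phi(\zz^2)|_{X^r}\;\le\; \eps^2\,C_r\,R^2\,|\zz^1-\zz^2|_{X^r}
\]
on the ball $\{|\zz|_{X^r}\le R\}$. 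Choosing $R:=2|b^2|_r$ and $\eps_*$ so small that $\eps_*^2 C_r R^2 \le 1/2$, the map $\Phi$ sends that ball to itself contractively, hence admits a unique fixed point $\zz\in X^r$ satisfying \eqref{k99}. Continuity in $\tau$ is automatic since each $\Phi(\zz)$ is continuous in $\tau$ by dominated convergence (here Lemma~\ref{l_k3} can be invoked if needed to verify continuity of $t\mapsto K(t)(\zz(t))$). Global uniqueness in $X^r$ (not only in the ball) follows by the standard argument of considering, for any two solutions, the first time at which one of them exits the ball of radius $R$, and using the Volterra structure to conclude.

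For part~2, the estimate $|\zz - \zz^0|_{X^r}\le C_r\eps^2$ is immediate from the identity
\[
\zz(\tau) - \zz^0(\tau)=\eps^2\int_0^\tau e^{-(\tau-t)\Lc} K(t)(\zz(t))\,dt
\]
combined with \eqref{k99} and the Duhamel bound above. The perturbation statement proceeds similarly: subtracting the two Duhamel identities and using \eqref{k3} yields
\[
|\zz-\zz'|_{X^r}\;\le\; \tfrac{1}{2}|\xi|_{X^r} + \eps^2\,C_r\,R^2\,|\zz-\zz'|_{X^r},
\]
and absorbing the last term into the left-hand side (possible for $\eps$ small) gives $|\zz-\zz'|_{X^r}\le C_r|\xi|_{X^r}$. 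There is no genuine obstacle in this argument; the only mildly delicate point is ensuring that all constants are uniform in $\tau\ge 0$, which is precisely why the gain of one order from $K:\cC_r\to\cC_{r+1}$ together with $\ga_s\ge 1$ is essential: it is what permits the fixed point to be run in the global-in-time space $X^r$ rather than on compact time intervals only.
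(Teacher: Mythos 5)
Your proposal is correct and is precisely the standard Picard/contraction argument the paper has in mind: the authors omit the proof, stating the lemma ``may be easily verified'' by the method of Section~4 of \cite{DK1}, which is exactly this Duhamel fixed-point scheme driven by the bounds \eqref{k2}, \eqref{k3} and the spectral gap $\ga_s\ge1$ that makes the argument global in time. All steps check out, including the absorption argument for the perturbation estimate in part~2.
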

\medskip
The lemma's assertion holds as well for non-zero initial conditions $\zz(0)\in\cC_r(\R^d)$ in \eqref{k1}, but we do not need this.

Let $K(\infty)$ be the operator, obtained from $K(\tau_0)$ by replacing in \eqref{k11} the kernels $\cZ^j(\tau_0; s)$, $s\in \R^d$, by 
$\cZ(\infty;\vs)$ (see Lemma~\ref{l_k1}). 
Let $r>d$ and $\zz^\eps\in \cC_r(\R^d)$ be a solution of the limiting stationary equation 
\be\lbl{z_eps}
\cL \zz^\eps - \eps^2 K(\infty) (\zz^\eps) = 2b(\cdot)^2
\ee
in the vicinity of $\cL^{-1} (2b^2)$, existing for small $\eps$
by the inverse function theorem. Since $b^2(\cdot) \in \bigcap_r \cC_r(\R^d)$ and, 
 as in \eqref{k2}, the map $K(\infty)$ is one-smoothing, then decreasing $\eps_*$ if needed
we achieve that $\zz^\eps \in \bigcap_r \cC_r(\R^d)$ for  $\eps\le\eps_*$ and  
\be\label{prob}
| \zz^\eps  - 2 \cL^{-1} (b^2)|_m \le   C_m \eps^2 \quad \forall \, m. 
\ee 
Here and below the constants depend on $b$ and $r_*$. 
 
Let us consider the curve $w(t) = \zz(t) -\zz^\eps$. It satisfies the equation 
\[\begin{split}
\dot w +\cL(w)& =\eps^2\big (K(t) (\zz) - K(\infty)(\zz^\eps)\big) \\
&=
\eps^2\big [
\big( K(t) (\zz)  - K(t) (\zz^\eps)\big) -\big( K(t)(\zz^\eps)-  K(\infty)(\zz^\eps)\big)\big]
\end{split} 
\]
and $ w(0)=-\zz^\eps.$
 Denote
  $
 K(\tau)(\zz^\eps) - K(\infty) (\zz^\eps) =: -\eta(\tau).
 $
 In view of Lemmas~\ref{l_k1} and \ref{l_4.2}, 
 $
 |\eta(\tau)|_r \le C_r e^{-2\tau}
 $
 for $\tau \ge0$. Next, regarding the difference 
 $
 K(\tau)(\zz(\tau)) -  K(\tau)(\zz^\eps)
 $
 as an operator, linear in $w(\tau) = \zz(\tau)- \zz^\eps$ and quadratic in $(\zz(\tau), \zz^\eps)$, we write it as 
 $
   \cK(\tau)(w(\tau)).
 $
 Then by \eqref{k3} and \eqref{k99}, 
 $
 | \cK(\tau) w|_{r+1} \le C_r |w|_r$,  $\forall r>d$. 
 Finally, we substitute 
 $$
 w(\tau) = v(\tau) +y(\tau), \quad v(\tau) = -e^{- \tau \cL}  \zz^\eps, 
 $$
 and re-write the equation on $w$ as an equation on $y$: 
 $$
\dot y +\cL y = \eps^2 \cK(	\tau) \big( v(\tau) + y(\tau) \big) + \eps^2 \eta(\tau), \qquad y(0)=0. 
$$
Or
 \be\lbl{k13}
y(t) = \eps^2 \int_0^t e^{-(t-s)\cL} \big[ \cK({s}) \big( v({s}) + y({s}) \big) +  \eta({s})\big] ds.
\ee 
Let $Y^r$ be the space of continuous curves $y: \R_+ \to \cC_r(\R^d)$, vanishing   at zero, with finite norm 
$
| y|_{Y^r} = \sup_{t\ge0} e^t |y(t)|_r. 
$ 
Let $\frak B$ be the linear operator 
$$
\fB(y)(t) = \int_0^t e^{-(t-s) \cL} \cK(s) (y(s)) \,ds.
$$
Then the equation for $y$ may be written as 
 \be\lbl{k14}
y(t) = \eps^2 \Big(\fB(y)(t)  + \fB(v)(t) +
\int_0^t e^{-(t-s)\cL}  \eta({s}) ds
\Big). 
\ee 
If $ | \tilde y|_{Y^r} =1$, then
$$
| \fB (\tilde y(t))|_{r+1} \le \int_0^t \big| e^{-(t-s)\cL} \cK(s) (\tilde y(s))\big|_{r+1} ds \le C'_r \int_0^t e^{-2(t-s)} e^{-s} ds < C'_r e^{-t}. 
$$
So $\fB: Y^r \to Y^{r+1}$ is a bounded linear operator if $r>d$, and accordingly the operator 
(id$\, - \eps^2\fB)$  is a linear isomorphism of $Y^r$ if $r>d$ and  $\eps$ is sufficiently small. It easy to see that $\fB (v)$ and $\int_0^t e^{-(t-s)\cL}  \eta({s}) ds$ 
both belong to all spaces $Y^r$. Then in view of \eqref{k14}, $| y|_{ Y^{r+1}} \le C \eps^2$. Since the operator $\fB$ is 1-smoothing, then by induction 
we get that $y$ belongs to all spaces $Y^r$. We have proved that 

\begin{lemma} \lbl{l_k5}
The solution $\zz(\tau)$, constructed in Lemma \ref{l_k4}, may be written as 
$$
\zz(\tau) =( \text{id}\, - e^{-\tau \cL} ) \zz^\eps +y(\tau), \;\; \text{where} \;\; 
|y(t)|_{r} \le C_r\eps^2 e^{-t}\;\; \forall\, t\ge0, \; \forall\,r\,.
$$
Here   $\zz^\eps$ is defined in \eqref{z_eps} and satisfies \eqref{prob}. 
\end{lemma}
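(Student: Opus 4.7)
The identity in the first line of the lemma is essentially a restatement of the substitution already introduced above: since $w(\tau)=\zz(\tau)-\zz^\eps$ and $w(\tau)=v(\tau)+y(\tau)$ with $v(\tau)=-e^{-\tau\cL}\zz^\eps$, one immediately gets
$$
\zz(\tau)=\zz^\eps - e^{-\tau\cL}\zz^\eps + y(\tau)=(\mathrm{id}-e^{-\tau\cL})\zz^\eps+y(\tau).
$$
So the task reduces to establishing the stated exponential decay bound $|y(t)|_r\le C_r\eps^2 e^{-t}$ for every $r>d$ from the integral equation \eqref{k14}.

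The plan is to solve \eqref{k14} by a contraction/fixed-point argument in the weighted space $Y^r$ of continuous curves $y:\R_+\to\cC_r(\R^d)$ with $y(0)=0$ and finite norm $|y|_{Y^r}=\sup_{t\ge 0}e^t|y(t)|_r$. First I would make precise the estimates that were sketched in the text: using the Lipschitz bound \eqref{k3} (extended as noted to $L_{\infty,r}$), together with the uniform-in-$t$ bound $|\zz(t)|_r+|\zz^\eps|_r\le C_r|b^2|_r$ from \eqref{k99} and \eqref{prob}, one gets that $\cK(s)$ is a bounded linear map $\cC_r\to\cC_{r+1}$ with an $s$-independent bound. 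Combined with the exponential decay of $e^{-(t-s)\cL}$ furnished by $\ga_s\ge 1$ (cf.\ \eqref{gamma}), the computation
$$
|\fB(\tilde y)(t)|_{r+1}\le\int_0^t e^{-2(t-s)}\,C_r|\tilde y(s)|_r\,ds\le C'_r e^{-t}|\tilde y|_{Y^r}
$$
shows that $\fB:Y^r\to Y^{r+1}\hookrightarrow Y^r$ is bounded (in fact one-smoothing) with operator norm $\le C'_r$. Hence for $\eps$ small enough, $(\mathrm{id}-\eps^2\fB)$ is invertible on every $Y^r$, $r>d$, by Neumann series.

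Next I would control the two forcing terms in \eqref{k14}. Since $\zz^\eps\in\bigcap_r\cC_r(\R^d)$ by \eqref{prob}, the curve $v(\tau)=-e^{-\tau\cL}\zz^\eps$ lies in every $Y^r$ with $|v|_{Y^r}\le C_r$, so $\fB(v)\in Y^{r+1}$ with a bound independent of $\eps$. For the last term, Lemma~\ref{l_k1}(ii) and Lemma~\ref{l_4.2} give $|\eta(s)|_r\le C_r e^{-2s}$ for every $r>d$, whence $\bigl|\int_0^t e^{-(t-s)\cL}\eta(s)\,ds\bigr|_r\le C_r e^{-t}$. Applying $(\mathrm{id}-\eps^2\fB)^{-1}$ then yields a unique $y\in Y^{r+1}$ with $|y|_{Y^{r+1}}\le C_r\eps^2$, i.e.\ $|y(t)|_{r+1}\le C_r\eps^2 e^{-t}$ for all $t\ge 0$.

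Finally, to upgrade from $r+1$ to every $r$, I would run a bootstrap: having shown $y\in Y^{r+1}$, the 1-smoothing of $\fB$ together with the improved regularity of the two forcing terms (already in $\bigcap_{r'}Y^{r'}$) and the invertibility of $(\mathrm{id}-\eps^2\fB)$ on each $Y^{r'}$ give $y\in Y^{r'}$ for every $r'>d$, with the bound $|y|_{Y^{r'}}\le C_{r'}\eps^2$. This proves the desired estimate for every $r$ and completes the proof. The only delicate point is verifying that the constants in $|\fB(\tilde y)(t)|_{r+1}\le C'_r e^{-t}|\tilde y|_{Y^r}$ really are $\eps$-independent; this follows because the $\cC_r$-norms of $\zz(t)$ and $\zz^\eps$ are bounded uniformly in $\eps\in(0,\eps_*]$ by Lemma~\ref{l_k4} and \eqref{prob}, so the quadratic-in-$(\zz,\zz^\eps)$ coefficient hidden in $\cK$ is controlled.
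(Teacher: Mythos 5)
Your proposal is correct and follows essentially the same route as the paper: the same decomposition $w=v+y$ with $v=-e^{-\tau\cL}\zz^\eps$, the same weighted space $Y^r$, the same bound showing $\fB:Y^r\to Y^{r+1}$ is bounded, inversion of $\mathrm{id}-\eps^2\fB$ by Neumann series for small $\eps$, and the same one-smoothing bootstrap to reach all $r$. The only cosmetic difference is that you label the argument a ``contraction/fixed-point'' scheme, whereas the equation \eqref{k14} is linear in $y$ and the paper treats it directly as a linear isomorphism; the content is identical.
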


\subsection{Energy spectra of quasisolutions and  kinetic equation}\lbl{ss_en_spectr} 
In this section we prove our main result. Namely, we show that the energy spectrum \eqref{energ_sp} of the quasisolution 
$\nL{s}(\tau) = \EE |A_s(\tau)|^2$ of eq. \eqref{ku4}  with large $L$  is
$\eps^3$-close to the solution $\zz(\tau)$  
of the WKE \eqref{k1}, constructed in Lemmas~\ref{l_k4},~\ref{l_k5}. 
 By  \eqref{diff-limit},
  it suffices to prove this for $\nL{s}$ replaced by $ n_s^{\le2}$. Let us denote $ w_s(\tau) =  n_s^{\le2}(\tau) -\zz_s(\tau)$;  
  then $w_s(0)=0$. Recall that $\eps_*$ is defined in Lemma~\ref{l_k4}.

\begin{lemma} \lbl{l_k6}
If $r>d+1$ and $\eps \le C_{1r}^{-1} \le \eps_*$  for an appropriate constant $C_{1r}$, then for    
 any ${\tauz}\ge0$ and $0<\tau\le1/2$,  
 \be\lbl{k15}
|w({\tauz} +\tau)|_r \le (1-\tau/2)| w({\tauz})|_r + C_{2r} \tau
\eps^2(\tau^{\kappa_*} +{   \eps^2}),
\ee  
where $\kappa_* =1/(1+2r_*)$. 
\end{lemma}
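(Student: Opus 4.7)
\textbf{Proof plan for Lemma \ref{l_k6}.}
The plan is to combine the integral identity \eqref{n_increment} from Theorem~\ref{th:kinetic_for_n^2} with the Duhamel representation of the solution $\zz$ of the kinetic equation \eqref{k1}. Integrating \eqref{k1} on $[\tauz,\tauz+\tau]$ one gets
\[
\zz(\tauz+\tau)=e^{-\tau\Lc}\zz(\tauz)+2\int_0^\tau e^{-t\Lc} b^2\,dt+\eps^2\int_0^\tau e^{-(\tau-u)\Lc}K(\tauz+u)(\zz(\tauz+u))\,du,
\]
while \eqref{n_increment} rewrites $K^\tau(\tauz)(n^{\le 2}(\tauz))$ via the change of variable $t=\tau-u$ as $\int_0^\tau e^{-(\tau-u)\Lc}K(\tauz)(n^{\le 2}(\tauz))\,du$. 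Subtracting yields
\[
w(\tauz+\tau)=e^{-\tau\Lc}w(\tauz)+\eps^2\!\int_0^\tau\! e^{-(\tau-u)\Lc}\bigl[K(\tauz)(n^{\le 2}(\tauz))-K(\tauz+u)(\zz(\tauz+u))\bigr]du+\eps^2\Rc.
\]

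The heart of the estimate is a telescoping decomposition of the bracket into three pieces:
(i) a Lipschitz-in-argument piece $K(\tauz)(n^{\le 2}(\tauz))-K(\tauz)(\zz(\tauz))$, bounded in $|\cdot|_r$ by $C_r|w(\tauz)|_r$ via \eqref{k3}, using the uniform bounds $|n^{\le 2}(\tauz)|_r,|\zz(\tauz)|_r\le C_r$ supplied by \eqref{bounds} and \eqref{k99};
(ii) a H\"older-in-time piece $K(\tauz)(\zz(\tauz))-K(\tauz+u)(\zz(\tauz))$, bounded by $C_r u^{\kappa_*}$ via Lemma~\ref{l_k3} (here is where the assumption $r>d+1$ is used);
(iii) a second Lipschitz-in-argument piece $K(\tauz+u)(\zz(\tauz))-K(\tauz+u)(\zz(\tauz+u))$, bounded by $C_r|\zz(\tauz+u)-\zz(\tauz)|_r$. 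To close (iii) I would show Lipschitz time-regularity $|\zz(\tauz+u)-\zz(\tauz)|_r\le C_r u$ on $[0,1]$: by Duhamel for \eqref{k1} this reduces to $|(e^{-u\Lc}-I)\zz(\tauz)|_r$, which via the pointwise bound $|e^{-2\ga_s u}-1|\le 2\ga_s u$ is $\le C_r u\,|\zz(\tauz)|_{r+2r_*}$, and this last norm is finite by Lemma~\ref{l_k5}.

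Integrating the three contributions in $u\in[0,\tau]$, using $|e^{-(\tau-u)\Lc}f|_r\le|f|_r$, gives
\[
|w(\tauz+\tau)|_r\le e^{-2\tau}|w(\tauz)|_r+\eps^2 C_r\tau|w(\tauz)|_r+\eps^2 C_r\bigl(\tau^{1+\kappa_*}+\tau^2\bigr)+\eps^2|\Rc(\tau)|_r,
\]
since $\ga_s\ge 1$ forces $|e^{-\tau\Lc}v|_r\le e^{-2\tau}|v|_r$. Plugging in \eqref{R_s} and using $\tau\le\tau^{\kappa_*}$ for $\tau\in(0,1]$ (as $\kappa_*\le 1$), the last three terms collapse to $C_r\eps^2\tau(\tau^{\kappa_*}+\eps^2)$. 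Finally, on $[0,1/2]$ one has the elementary inequality $e^{-2\tau}\le 1-\tau$; choosing $C_{1r}$ large enough that $\eps\le C_{1r}^{-1}\le\eps_*$ entails both existence of $\zz$ and $\eps^2 C_r\le 1/2$, so that $e^{-2\tau}+\eps^2 C_r\tau\le 1-\tau+\tau/2=1-\tau/2$. This yields \eqref{k15}.

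The one mild subtlety is step (iii): a priori $\Lc$ is unbounded on $\cC_r$, so Lipschitz time-regularity of $\zz$ in $\cC_r$ is not immediate; however the full regularity $\zz\in\bigcap_{r'}\cC_{r'}$ furnished by Lemma~\ref{l_k5} lets one absorb this loss by passing through the slightly stronger norm $|\cdot|_{r+2r_*}$. Everything else is bookkeeping.
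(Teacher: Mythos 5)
Your proposal is correct and follows essentially the same route as the paper: the identical Duhamel subtraction, the same three-term telescoping of the bracket (Lipschitz in the argument at time $\tauz$, H\"older in time via Lemma~\ref{l_k3}, Lipschitz in the argument again combined with time-regularity of $\zz$), the same contraction $|e^{-\tau\Lc}v|_r\le(1-\tau)|v|_r$ from $\cL\ge2\mathbbm{1}$, and the same absorption of the $\eps^2 C_r\tau|w(\tauz)|_r$ term by taking $\eps$ small. The only cosmetic difference is in step (iii), where you bound $|\zz(\tauz+u)-\zz(\tauz)|_r$ through $|(e^{-u\Lc}-I)\zz(\tauz)|_r$ while the paper integrates the equation directly; both rest on $\zz\in\bigcap_{r'}\cC_{r'}$ and give the same $O(u)$ bound.
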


\begin{proof}
Since by \eqref{k1} 
$$
\zz({\tauz} +\tau) =e^{-\tau\cL} \zz({\tauz}) +2 \int_0^\tau e^{- t\cL} b^2dt+ \eps^2 \int_{{\tauz}}^{{\tauz}+\tau}
e^{-({\tauz}+\tau-t) \cL} K(t) (\zz(t)) dt, 
$$
then in view of \eqref{n_increment} and  \eqref{I_tau}
 \be\lbl{k16}
w(\tau +{\tauz}) = e^{-\tau\cL} w({\tauz}) +\eps^2 \Delta +\cR,
\ee 
where $\cR$ is as in \eqref{n_increment} and 
$$
\Delta =\int_{{\tauz}}^{{\tauz}+\tau} e^{-({\tauz}+\tau-t) \cL}   \Big( K({\tauz}) (n^{\le2}({\tauz}))-    K(t) (\zz(t))  \Big)dt.
$$
Note that in view of Lemma \ref{l_k4}  and estimates \eqref{bounds},
\be\label{boun}
| n^{\le2} (\tau)|_r, \ |\zz(\tau)|_r \le C_r \quad\text{for all $\tau$ and all $r$},
\ee
with  suitable constants $C_r$. 
Let us re-write $\Delta$ as follows:
\[\begin{split}
\Delta&= \int_{{\tauz}}^{{\tauz}+\tau} e^{-({\tauz}+\tau-t) \cL}   \big( K({\tauz}) (n^{\le2}({\tauz}))-    K({\tauz}) (\zz({\tauz})) \big)dt \\
&+ \int_{{\tauz}}^{{\tauz}+\tau} e^{-({\tauz}+\tau-t) \cL}   \big( K({\tauz}) (\zz({\tauz}))   -    K(t) (\zz({\tauz})) \big)dt \\
 &+\int_{{\tauz}}^{{\tauz}+\tau} e^{-({\tauz}+\tau-t) \cL}   \big( K(t) (\zz({\tauz}))  -    K(t) (\zz(t)) \big)dt =: \Delta^1+\Delta^2+\Delta^3.
\end{split}
\]
By \eqref{k3} and \eqref{boun}, 
$
|\Delta^1 |_r \le C_r\tau |w({\tauz})|_r.
$
Similar,
$$
|\Delta^3 |_r \le C_r \tau \sup_{{\tauz}\le t\le {\tauz}+\tau} |\zz(t) -\zz({\tauz})|_r\le 
 C_r\tau^2
$$
since
$
|\zz(t) -\zz({\tauz})|_r \le \int_{{\tauz}}^t |  -\cL\zz(l) + \eps^2 K(l) (\zz(l)) +2b^2|_r dl
$
and
 $
|\zz(t)|_{r+r_*} \le C'_r 
$
by \eqref{boun}. Now let us consider $\Delta^2$. By Lemma \ref{l_k3}, 
$
|K({\tauz}) (\zz({\tauz})) -K(t) (\zz({\tauz}))|_r \le C_r(t-{\tauz})^{\kappa_*}. 
$
So 
$
\Delta^2 \le C_r \int_0^\tau t^{\kappa_*} dt = C'_r \tau^{1+\kappa_*}. 
$

Since $\cL \ge2  \mathbbm{1}$ and ${\tau}\le 1/2$, then  
$
|e^{-\tau \cL} w({\tauz})|_r \le (1-\tau) |w({\tauz})|_r.
$
Now \eqref{k16},  \eqref{R_s} 
  and the bounds on $\Delta^j$ imply that
$$
|w({\tauz} +\tau)|_r \le (1-\tau) |w({\tauz})|_r + C_r\eps^2  \tau
\big( |w({\tauz})|_r +\tau+ \tau^{\kappa_*} + (\tau+ {   \eps^2})
\big),
$$
and \eqref{k15} follows if $C_{1r}^{-1}\ll 1$. 
\end{proof}

For any $0<\tau\le 1/2$, any 
 $N$ and for $k=0,\dots, N$ let us set $w_k=|w(k\tau)|_r$. Let the function $k\to w_k$ attains its maximum at a
point $k$ which we write as $k:= k_0+1$. If $ k_0+1=0$, then $w_k\equiv0$. Otherwise in view of \eqref{k15} we have 
$$
w_{k_0} \le w_{k_0+1} \le 
(1-\tau/2) w_{k_0} + C_{2r} \tau \eps^2 (\tau^{\kappa_*} + {   \eps^2}).
$$
So $ w_{k_0} \le  2 C_{2r} \eps^2  (\tau^{\kappa_*} + {   \eps^2}) $ and
$$
\max_{0\le k\le N} |w(k\tau)|_r = w_{k_0+1} \le 3  C_{2r} \eps^2
(\tau^{\kappa_*} + {   \eps^2})
$$
since $\tau\le 1/2$. Applying again \eqref{k15} with ${\tauz}=k\tau$ and $\tau$ replaced by any $\bar\tau \in (0, \tau)$, and using that in the 
formula above $N$ is any, we get that $|w(t)|_r \le 4 C_{2r} \eps^2
(\tau^{\kappa_*} + {   \eps^2})$, for any $t\ge0$. Sending $\tau\to0$ (and estimating norms $|\cdot|_{r}$ with $r<d+2$ via 
$|\cdot|_{d+2}$) and then using \eqref{diff-limit}  we finally get 

\begin{theorem} \lbl{t_k1}
For any $r$ there exist positive  constants 
 $C_{1r}, C_{2r}, C_{3r} $ such that if  $\eps \le C_{1r}^{-1}$, then 
 \be\lbl{k17}
 \sup_{\tau\ge0} | n^{\le2}(\tau) -\zz(\tau)|_r \le C_{2r} 
   {   \eps^4}
\ee 
and   if $L\ge \eps^{-2}$,  then 
\be\lbl{k18}
  \sup_{\tau\ge0} | \nL{\cdot} (\tau) -\zz(\tau)|_r \le C_{3r}
   \eps^3. 
\ee 
\end{theorem}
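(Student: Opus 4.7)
The plan is to leverage Lemma~\ref{l_k6}, which already gives a discrete one-step contraction for $w(\tau):=n^{\le 2}(\tau)-\zz(\tau)$, and to promote it to a uniform-in-$\tau$ bound via a discrete Gronwall argument. Once the bound for $n^{\le 2}$ is in hand, the bound for $\nL{\cdot}$ follows from \eqref{diff-limit} by a triangle inequality, where the hypothesis $L\ge\eps^{-2}$ converts $L^{-1/2}$ into an $\eps$.

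First, fix $r>d+1$ so that Lemma~\ref{l_k6} applies. For any step size $\tau\in(0,1/2]$ and any $N\ge 1$, set $w_k:=|w(k\tau)|_r$ for $0\le k\le N$ (note $w_0=0$). Let $k_{\max}=k_0+1$ be the argmax of $k\mapsto w_k$. If $k_0+1=0$ then $w\equiv 0$ on the grid; otherwise Lemma~\ref{l_k6} with base time $\tauz=k_0\tau$ yields
\[
w_{k_0}\ \le\ w_{k_0+1}\ \le\ (1-\tau/2)\,w_{k_0}+C_{2r}\,\tau\,\eps^2(\tau^{\kappa_*}+\eps^2),
\]
which forces $w_{k_0}\le 2C_{2r}\eps^2(\tau^{\kappa_*}+\eps^2)$ and hence $\max_k w_k\le 3C_{2r}\eps^2(\tau^{\kappa_*}+\eps^2)$. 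Since $N$ was arbitrary, this controls $|w(k\tau)|_r$ for every $k\in\N$. To cover intermediate times $t\in(k\tau,(k+1)\tau)$, apply Lemma~\ref{l_k6} once more with base time $k\tau$ and increment $\bar\tau:=t-k\tau\in(0,\tau)$, obtaining the same type of bound up to the constant $4C_{2r}$.

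Next I would send $\tau\to 0^+$. Since $\kappa_*=1/(1+2r_*)>0$, the term $\tau^{\kappa_*}\to 0$ and I conclude
\[
\sup_{\tau\ge 0}|w(\tau)|_r\ \le\ 4C_{2r}\,\eps^4,
\]
which is \eqref{k17} for $r>d+1$; for smaller $r$ just use $|\cdot|_r\le|\cdot|_{d+2}$. For \eqref{k18}, apply \eqref{diff-limit}: multiplying by $\lan s\ran^r$ (absorbing $C^\#(s)$ into a larger $r$) gives $|n^{\le 2}(\tau)-\nL{\cdot}(\tau)|_r\le C_r\eps^2(L^{-1/2}+\eps)$. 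Under $L\ge\eps^{-2}$ the right-hand side is at most $2C_r\eps^3$, and combining with \eqref{k17} via the triangle inequality produces \eqref{k18} with a suitable $C_{3r}$.

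The delicate point, which I expect to require the most attention, is the ``freezing at the maximum" step in the Gronwall iteration: a naive iterative estimate on $w_{k+1}$ in terms of $w_k$ would accumulate an $N$-dependent factor and be useless for a uniform-in-$\tau$ bound. The key is that the coefficient in front of $w(\tauz)$ in Lemma~\ref{l_k6} is $(1-\tau/2)$ rather than $(1+O(\tau))$, so evaluating at the argmax collapses the recursion to a single algebraic inequality. Everything else---the $\tau\to 0$ limit, the regularity embedding, and passing from $n^{\le 2}$ to $\nL{\cdot}$---is routine once this contraction structure is isolated.
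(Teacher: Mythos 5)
Your proposal is correct and follows essentially the same route as the paper: the same evaluation of the recursion from Lemma~\ref{l_k6} at the argmax of $k\mapsto w_k$, the same extension to intermediate times via a second application with increment $\bar\tau\in(0,\tau)$, the same limit $\tau\to0$ to kill the $\tau^{\kappa_*}$ term, the same embedding $|\cdot|_r\le|\cdot|_{d+2}$ for small $r$, and the same use of \eqref{diff-limit} with $L\ge\eps^{-2}$ to pass from $n^{\le2}$ to $\nL{\cdot}$. No gaps.
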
 

 Relation  \eqref{k18} together with Lemma~\ref{l_k5} give a control over  the   long time behaviour 
of the spectra of quasisolutions of \eqref{ku4} in terms of the 
 stationary solution  $\zz_\eps$ of the limiting kinetic equation (see \eqref{z_eps}): 
  $$
| \nL{\cdot}(\tau) -\zz^\eps|_r \le C_r(e^{-\tau} +\eps^3),
\qquad \forall\, \tau\ge 0\,.
$$
By  Proposition~\ref{p:approx}  with $d\geq 3$ this result and \eqref{k18}  extend  to the 
spectra of quasisolutions of \eqref{ku5}, defined in 
  \eqref{n_s-a}, as expressed in
\begin{theorem} \lbl{t_k2}
For any $r$ there exist positive constants 
 $C_{4r}, C_{5r}$ such that if  $\eps \le C_{4r}^{-1}$ and $L\ge \eps^{-2}$, then 
\begin{align}\lbl{k19}
  \sup_{\tau\ge0} | \no_{\cdot,L} (\tau) -\zz(\tau)|_r \le C_{4r}
  \eps^3\,,\\\lbl{k20}
  | \no_{\cdot,L}(\tau) -\zz^\eps|_r \le
   C_{5r}(e^{-\tau} +\eps^3), \qquad \forall\, \tau\ge 0\,.
\end{align}
\end{theorem}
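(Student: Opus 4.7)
The plan is to reduce Theorem~\ref{t_k2} to Theorem~\ref{t_k1} by using Proposition~\ref{p:approx} as a bridge, and then derive \eqref{k20} from \eqref{k19} together with the exponential convergence of $\zz(\tau)$ to $\zz^\eps$ that follows from Lemma~\ref{l_k5}. Since $\nL{s}(\tau)$ and $\no_{s,L}(\tau)$ are the $L_2$-norms of the same quadratic truncations of the expansions \eqref{decomp} and \eqref{decompor} respectively, both admit the finite decomposition
\[
\EE|\cdot|^2 = \sum_{k_1,k_2\in\{0,1,2\}} \rho^{k_1+k_2}\,\EE\,(\cdot)^{(k_1)}_s \overline{(\cdot)^{(k_2)}_s},
\]
so the difference $\no_{s,L}(\tau)-\nL{s}(\tau)$ is a finite sum of terms of the form $\rho^{N}\bigl[\EE \ai^{(k_1)}_s\bar \ai^{(k_2)}_s - \EE a^{(k_1)}_s \bar a^{(k_2)}_s\bigr]$ with $N=k_1+k_2\in\{0,1,2,3,4\}$.

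For the $N=0$ term the difference vanishes since $\ai^{(0)}_s = a^{(0)}_s$. For $1\le N\le 4$ I would apply Proposition~\ref{p:approx} termwise: with $d\ge3$ and hence $\chi_d(L)\equiv 1$, the bound there reads $L^{-N-d+1}C^{\#}(s)$, so after multiplying by $\rho^N=(\eps L)^N$ each summand is dominated by $\eps^N L^{1-d} C^{\#}(s)$. Summing the finitely many terms and using the hypothesis $L\ge\eps^{-2}$, which gives $L^{1-d}\le L^{-2}\le\eps^{4}$ when $d\ge 3$, yields uniformly in $\tau\ge 0$
\[
|\no_{\cdot,L}(\tau) - \nL{\cdot}(\tau)|_r \;\le\; C_r\,\eps^{5}.
\]
Combining this with estimate \eqref{k18} from Theorem~\ref{t_k1} by the triangle inequality delivers \eqref{k19}, with $C_{4r}$ absorbing both $C_{3r}$ and an $\eps^2$-perturbation from the approximation error.

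For \eqref{k20} I would use Lemma~\ref{l_k5}, which expresses $\zz(\tau) = (\mathrm{id} - e^{-\tau\cL})\zz^\eps + y(\tau)$ with $|y(\tau)|_r\le C_r\eps^2 e^{-\tau}$. Since the operator $\cL$ is multiplication by $2\ga_s\ge 2$, we have $|e^{-\tau\cL}\zz^\eps|_r \le e^{-2\tau}|\zz^\eps|_r$, and $|\zz^\eps|_r$ is bounded by \eqref{prob}. Thus $|\zz(\tau)-\zz^\eps|_r\le C_r e^{-\tau}$, and combining with \eqref{k19} via the triangle inequality gives \eqref{k20}.

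The argument is short and essentially mechanical once Proposition~\ref{p:approx}, Theorem~\ref{t_k1}, and Lemma~\ref{l_k5} are in hand; the only subtle point is to verify that the specific pairing of the scaling $\rho=\eps L$, the hypothesis $L\ge\eps^{-2}$, and the dimension condition $d\ge3$ makes the bound $\eps^N L^{1-d}$ from Proposition~\ref{p:approx} genuinely smaller than $\eps^3$, so that the approximation error is negligible compared to the WKE error in Theorem~\ref{t_k1}. This is the only place where the constraints on $L$ and $d$ really bite in the present step; the analytical difficulty of the theorem has already been concentrated upstream, in the Feynman diagram machinery behind Proposition~\ref{p:approx} and in the kinetic estimates behind Theorem~\ref{t_k1}.
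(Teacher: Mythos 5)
Your proposal is correct and follows essentially the same route as the paper: the authors likewise obtain Theorem~\ref{t_k2} by combining \eqref{k18} with Lemma~\ref{l_k5} for the long-time estimate and then invoking Proposition~\ref{p:approx} (with $d\ge3$, so $\chi_d\equiv1$) to transfer the bounds from $\nL{\cdot}$ to $\no_{\cdot,L}$. Your termwise bookkeeping showing that $\rho^N L^{-N-d+1}=\eps^N L^{1-d}\le\eps^5$ under $L\ge\eps^{-2}$ is exactly the detail the paper leaves implicit, and it is carried out correctly.
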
 
 Relation \eqref{k17} extends to the energy spectra of quasisolutions of \eqref{ku5} analogously.

\appendix

\section{Proof of Lemma~\ref{l:intersection}}\label{app:intersection}

In this appendix we suppose that the dimension $d$ satisfies $d\ge2$. 

\subsection{Idea of the proof and general setting}
In Lemma~\ref{l:intersection} (up to an obvious scaling) 
we have  to estimate the number of integer points on a quadric 
inside a large box. The idea is to embed the integral points of the box in an affine space over a large finite field and then apply powerful algebraic geometry techniques to estimate the needed number (note that this identification
of bounded integers with elements of a finite field is ubiquitous in coding theory and combinatorics).  It is possible mainly due to the fact that this techniques permits to count points defined over a finite field using some geometric information (essentially the dimension, the degree and irredundant decomposition)  on the corresponding algebraic set over the algebraic closure of our finite field. We begin with recalling some basic definitions and results concerning such algebraic sets
(see, for example, the first chapter of the book \cite{Sh}).

{\bf Affine algebraic sets.} Let us   fix an algebraically closed field $K$. 
Let ${\mathbb A}^m=K^m$
be the $m$-dimensional affine space over $K$, and let $F_1,\ldots,F_s\in K[T_1, \ldots, T_m]$ 
be non-zero polynomials. 
Then an {\em affine algebraic set} (AAS) $X$ is just the set of common zeros of these polynomials:
\be\label{X}
X\!=\!\{(a_1,\ldots,a_m)\in K^m:F_1(a_1,\ldots,a_m)\!=\!\ldots\!=\!F_s(a_1,\ldots,a_m)\!=\!0\}. 
\ee
{\bf Irreducibility.} An AAS $X$ is {\em reducible} if $X=X_1\cup X_2$ with two non-empty AAS $X_1, X_2$
s.t. $X_1\ne X, X_2\ne X.$ If it is not the case, $X$ is called {\em irreducible,} or an {\em affine algebraic variety}
(see \cite{Sh}, Section I.3.1).

\begin{theorem} (Irredundant decomposition) Any non-empty AAS $X$ can be presented as  
	\be\label{ir} X=X_1\cup \ldots \cup X_l\ee
	for irreducible $X_1, \ldots, X_l$ such that  $X_i\not\subset X_j$ for $i\ne j$. The  decomposition is unique up to order.\end{theorem}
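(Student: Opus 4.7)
The plan is to reduce everything to the Noetherian property of the polynomial ring $K[T_1,\ldots,T_m]$ given by Hilbert's basis theorem, which, via the order-reversing correspondence between AAS and radical ideals, translates into the descending chain condition (DCC) for affine algebraic sets: every strictly decreasing chain $X \supsetneq X^{(1)} \supsetneq X^{(2)} \supsetneq \cdots$ terminates. Granted DCC, the rest is point-set manipulation inside the Zariski topology.

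For existence, I would proceed by Noetherian induction. Let $\mathcal{S}$ denote the collection of non-empty AAS that do \emph{not} admit any finite decomposition into irreducible closed subsets. If $\mathcal{S}\ne\emptyset$, DCC yields a minimal element $X\in\mathcal{S}$. Since $X\in\mathcal{S}$ it is reducible, so $X=Y_1\cup Y_2$ with $Y_1,Y_2\subsetneq X$. Minimality forces $Y_1,Y_2\notin\mathcal{S}$, so each decomposes as a finite union of irreducibles, and concatenating these unions yields one for $X$, contradicting $X\in\mathcal{S}$. Hence every non-empty AAS admits a decomposition $X=Z_1\cup\cdots\cup Z_N$ into irreducibles. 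To pass to an irredundant one, iteratively discard any $Z_i$ that is contained in some $Z_j$ with $j\ne i$; the surviving family still covers $X$ and satisfies the non-containment condition of \eqref{ir}.

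For uniqueness, the key observation is: \emph{if $Z$ is irreducible and $Z\subset W_1\cup\cdots\cup W_r$ for AAS $W_j$, then $Z\subset W_j$ for some $j$.} Indeed, $Z=\bigcup_j(Z\cap W_j)$ is a finite union of closed subsets of $Z$, and irreducibility prevents all of them from being proper. Given two irredundant decompositions $X=X_1\cup\cdots\cup X_l=Y_1\cup\cdots\cup Y_k$, for each $i$ this lemma produces $j(i)$ with $X_i\subset Y_{j(i)}$; applying it again to $Y_{j(i)}\subset\bigcup_{i'} X_{i'}$ yields $i'$ with $Y_{j(i)}\subset X_{i'}$. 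Then $X_i\subset Y_{j(i)}\subset X_{i'}$, and irredundancy of $\{X_1,\ldots,X_l\}$ forces $i=i'$, whence $X_i=Y_{j(i)}$. A symmetric argument shows $i\mapsto j(i)$ is a bijection, giving uniqueness up to order.

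There is no genuine obstacle in this proof: the only non-trivial ingredient is the DCC for affine algebraic sets, which is a standard consequence of Hilbert's basis theorem and will be invoked once at the start. The rest is the routine ``Noetherian topological space'' argument specialized to the Zariski topology on $\mathbb{A}^m$.
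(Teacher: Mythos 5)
Your proof is correct and complete: the DCC from Hilbert's basis theorem, the Noetherian induction for existence, and the lemma that an irreducible set inside a finite union of closed sets lies in one of them are exactly the standard ingredients. The paper does not prove this statement at all --- it recalls it as a known fact with a reference to the first chapter of Shafarevich --- and your argument is precisely the textbook proof found there, so there is nothing further to compare.
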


This decomposition is especially simple for a hypersurface $X$, i.e. when in \eqref{X}    $s=1$. Then 
$F=F_1(T_1, \ldots T_m)= \Pi_{j=1}^l Q_j$ for irreducible polynomials $Q_j$ which are
uniquely  defined up to multiplicative constants and permutation since the ring $K[T_1, \ldots, T_m]$ is a unique factorisation domain, see, e.g. Chapter IV of \cite{La}, and then $X_j=\{Q_j=0\}.$ This uniqueness is true under the condition which we can and will suppose to hold, namely, that the polynomial $P$ does not have multiple divisors, i.e., all $Q_j,j=1,\ldots, l$ are distinct. For further references we formulate a corollary of the  unique factorisation property (see \cite{Sh}, Section~I.3.1)

\begin{lemma}\label{dec} i). If $X$ and $Y$ are hypersurfaces, then $X=Y$ if and only if the corresponding
	polynomials $P_X$ and $P_Y$ are proportional. Moreover if $Y$ is irreducible and $X\subseteq Y$, then $X=Y.$\smallskip
	
	ii). If $\deg P_X=2$ then  there are exactly  two  possibilities: either $X$ irreducible   (in this case it cannot contain a
	hyperplane), 
	or $X=X_1\cup X_2$ for two affine hyperplanes,  defined by affine linear polynomials $l_1$ and $l_2$, and $P_X=l_1l_2$.
\end{lemma}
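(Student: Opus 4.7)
The proof rests on two standard tools: Hilbert's Nullstellensatz over the algebraically closed field $K$, and the fact that $K[T_1,\ldots,T_m]$ is a unique factorization domain. The assumption (made just before the lemma) that $P_X$ has no multiple divisors is used in an essential way to identify the vanishing ideal of a hypersurface with the principal ideal generated by its defining polynomial.

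\textbf{Part (i).} My first step would be to recall that for any AAS $X\subseteq\mathbb{A}^m$, its vanishing ideal $I(X)=\{f\in K[T_1,\ldots,T_m]: f|_X=0\}$ satisfies $I(X)=\sqrt{J}$ for any ideal $J$ cutting out $X$ (Nullstellensatz). Applied to $X=\{P_X=0\}$ with $P_X=\prod Q_j$ a product of distinct irreducible factors, this gives $I(X)=\sqrt{(P_X)}=(P_X)$. Hence $X=Y$ is equivalent to $(P_X)=(P_Y)$, which in the UFD $K[T_1,\ldots,T_m]$ is equivalent to $P_X$ and $P_Y$ being associates, i.e.\ proportional by a non-zero scalar. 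For the second assertion, if $Y$ is irreducible and $X\subseteq Y$, then $I(Y)\subseteq I(X)$, so $P_Y\in (P_X)$, meaning $P_X\mid P_Y$; since $Y$ is irreducible $P_Y$ is (up to scalar) irreducible, so $P_X$ is either a unit or proportional to $P_Y$; the first case is impossible because $P_X$ is non-constant ($X$ is a proper hypersurface), forcing $X=Y$.

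\textbf{Part (ii).} Here I would just combine part (i) with the degree constraint. If $P_X$ is irreducible in $K[T_1,\ldots,T_m]$, then by the irreducibility criterion via the Nullstellensatz $X$ is an irreducible AAS. To see it contains no hyperplane, suppose $H=\{l=0\}\subseteq X$ with $l$ affine linear. Then by part (i) applied with $Y=X$, we get $l\in I(X)=(P_X)$, hence $P_X\mid l$; but $\deg P_X=2>1=\deg l$, a contradiction. If instead $P_X$ is reducible, then since $\deg P_X=2$ the only possibility in the UFD $K[T_1,\ldots,T_m]$ is $P_X=l_1 l_2$ with $l_1,l_2$ affine linear, and then directly $X=\{l_1=0\}\cup\{l_2=0\}$ is the union of two affine hyperplanes.

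\textbf{Main obstacle.} There is no serious obstacle: everything reduces to Nullstellensatz plus unique factorization. The only point requiring a little care is the role of the squarefree assumption on $P_X$, which is what allows the identification $I(X)=(P_X)$ (without it, one would only have $I(X)=(P_X^{\mathrm{red}})$ where $P_X^{\mathrm{red}}$ is the squarefree radical). I would state this assumption explicitly at the start of the proof to avoid any ambiguity in the ``$P_X$ and $P_Y$ are proportional'' conclusion.
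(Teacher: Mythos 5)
Your proof is correct, and it is essentially the argument the paper relies on implicitly: the paper offers no proof of this lemma, citing it as a standard corollary of unique factorisation (Shafarevich, Section I.3.1), and your Nullstellensatz-plus-UFD derivation (with the squarefree hypothesis giving $I(X)=(P_X)$) is exactly that standard justification. The only presentational wobble is in the "no hyperplane" step of part (ii): from $H\subseteq X$ the direct consequence is $l\mid P_X$, not $l\in I(X)$; your chain becomes literally correct only after you first invoke the moreover clause to conclude $H=X$ and hence $(l)=(P_X)$, which is what you intend but should be stated explicitly.
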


{\bf Dimension.} One can define the {\em dimension} $r=\dim X\in \{0,1,\ldots,m\}$ as follows:
$\dim X=\max\{\dim X_i,i=1,\ldots,l\}$ for \eqref{ir}, and for an irreducible AAS \, $X$
$$
\dim X=\max\{r:X=X_0\supset X_1\supset\ldots\supset X_r\ne \emptyset\},
$$
where all $X_i, i=0,\ldots, r$ are irreducible AAS and all inclusions are strict. The {\em codimension}
of $X$ is $\mathrm{codim}\, X=m-\dim X$.

In particular, if $ \dim X=m$ then $X={\mathbb A}^m$ (indeed, if $X \subset {\mathbb A}^m, X\ne{\mathbb A}^m$ the  definition implies that $\dim X<\dim {\mathbb A}^m=m$) and if $ \dim X=0$ then $X$ is a finite set.
The codimension of $X$ as in \eqref{X} is at most $s$.

From the definition we get immediately (see \cite{Sh}, Section I.6.2)
\begin{lemma}\label{dim}   If $Y\subset X, Y\ne X$ and $X$ is irreducible, then $\dim X>\dim Y$,  $\mathrm{codim}\, X<\mathrm{codim}\,Y.$ 
\end{lemma}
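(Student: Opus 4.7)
The plan is to work directly from the definitions of dimension recalled just above the lemma, reducing first to the case when $Y$ itself is irreducible and then producing a strict chain in $X$ that is one step longer than any maximal chain in $Y$.

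First I would decompose $Y$ into its irreducible components $Y = Y_1 \cup \ldots \cup Y_k$ via the irredundant decomposition theorem stated earlier. By the definition $\dim Y = \max_i \dim Y_i$, so it suffices to establish the strict inequality $\dim Y_i < \dim X$ for each component $Y_i$. Since $Y_i \subseteq Y$ and $Y \neq X$ by hypothesis, each $Y_i$ is a proper subset of $X$ (and $Y_i$ is irreducible by construction), so the problem reduces to: if $W \subsetneq X$ with both $W$ and $X$ irreducible, then $\dim W < \dim X$.

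For this reduced statement, set $r = \dim W$ and pick a witnessing chain
$$
W = W_0 \supsetneq W_1 \supsetneq \ldots \supsetneq W_r
$$
of strict inclusions of non-empty irreducible AAS, which exists by definition of $\dim W$. Prepending $X$ and using $X \supsetneq W$ (strict because $W \neq X$) together with the fact that every $W_j$ is a non-empty irreducible subset of $X$, we obtain
$$
X \supsetneq W_0 \supsetneq W_1 \supsetneq \ldots \supsetneq W_r,
$$
a chain of length $r+1$ inside $X$ made of non-empty irreducible AAS. By the definition of $\dim X$ this gives $\dim X \geq r+1 > r = \dim W$, as required.

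The codimension statement is then purely formal: since $\mathrm{codim}\, Z = m - \dim Z$ for any AAS $Z \subseteq \mathbb{A}^m$, the strict inequality $\dim X > \dim Y$ yields $\mathrm{codim}\, X < \mathrm{codim}\, Y$. There is no serious obstacle here; the only point that must be handled with care is the verification that the prepended chain is a legitimate witness for $\dim X$, i.e.\ that the inclusion $X \supsetneq W_0$ is strict and that the $W_j$ are (already) non-empty and irreducible, both of which follow immediately from the hypotheses and from the chain already chosen inside $W$.
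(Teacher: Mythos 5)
Your proof is correct and is exactly the "immediate from the definition" argument the paper has in mind (the paper gives no proof, merely asserting the lemma follows from the definition and citing Shafarevich, Section I.6.2). Reducing to an irreducible component of $Y$ and prepending $X$ to a maximal chain is the standard way to spell this out, and the codimension claim is indeed formal.
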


{\bf Degree.} Let  $X\subset{\mathbb A}^m$ be a non-empty AAS, $ \dim X=r$. Then its  {\em degree}
$\deg X$ is defined as follows:
$$ \deg X=\max\{\hbox{cardinality of}\;\;X\cap L: \dim\big(X\cap L\big)=0\},$$
where $L\subset{\mathbb A}^m$ is an affine plane with $\dim L=m-r$.

\begin{lemma}\label{deg}   If $X$ is a hypersurface (i.e. in \eqref{X}  $s=1$), then $\operatorname{codim} X = 1$ and
	$\deg X=\deg F_1$. 
\end{lemma}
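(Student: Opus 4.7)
The plan is to prove the two assertions by reducing to classical one-variable polynomial facts after a suitable linear change of coordinates. First I would reduce to the case that $F := F_1$ is squarefree (as noted just before Lemma~\ref{dec}, replacing $F$ by the product of its distinct irreducible factors does not change the zero set $X$), with $n := \deg F \ge 1$. Since $K$ is infinite, a generic linear change of coordinates puts $F$ in Weierstrass form with respect to $T_m$: the leading homogeneous form $F^{\top}$ satisfies $F^{\top}(0,\dots,0,1)\ne 0$, so after rescaling $F$ is monic of degree $n$ in $T_m$ with coefficients in $K[T_1,\dots,T_{m-1}]$. A further generic change also guarantees that every irreducible factor $Q_j$ of $F$ depends non-trivially on $T_m$, i.e.\ $\partial_{T_m}Q_j\ne 0$.

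For the codimension, the upper bound $\dim X\le m-1$ is immediate from $X\subsetneq \A^m$ (since $F\ne 0$) together with Lemma~\ref{dim}. For the lower bound, the linear projection $\pi:\A^m\to\A^{m-1}$, $(T_1,\dots,T_m)\mapsto(T_1,\dots,T_{m-1})$, restricts to a surjection $X\to\A^{m-1}$: for every $c\in K^{m-1}$ the polynomial $F(c,T_m)\in K[T_m]$ is monic of degree $n\ge 1$ and so has a root in the algebraically closed field $K$. The standard fact that a surjective polynomial map cannot decrease dimension then gives $\dim X\ge m-1$, so $\operatorname{codim} X=1$.

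For the degree, the upper bound is elementary: any affine line $L=\{a+tb:t\in K\}$ (with $b\ne 0$) gives $F|_L(t):=F(a+tb)\in K[t]$ of degree at most $n$, so whenever $X\cap L$ is finite (equivalently $F|_L\not\equiv 0$) it has at most $n$ points. For the lower bound I consider the ``vertical'' lines $L_c=\{(c_1,\dots,c_{m-1},t):t\in K\}$; by monicness $F(c,T_m)$ always has degree exactly $n$ in $T_m$. It then suffices to exhibit a $c\in K^{m-1}$ at which the discriminant $\Delta\in K[T_1,\dots,T_{m-1}]$ of $F$ with respect to $T_m$ does not vanish, for then $F(c,T_m)$ has $n$ distinct roots in $K$ and $|X\cap L_c|=n$.

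The main obstacle is showing that $\Delta\not\equiv 0$, equivalently that $\operatorname{Res}_{T_m}(F,\partial_{T_m}F)\ne 0$ in $K[T_1,\dots,T_{m-1}]$. By the standard resultant--gcd correspondence this reduces to proving $\gcd(F,\partial_{T_m}F)=1$ in the UFD $K[T_1,\dots,T_m]$. Writing $F=Q_1\cdots Q_l$ with the $Q_j$ distinct irreducible, a direct computation shows that $Q_j\mid\partial_{T_m}F$ iff $Q_j\mid\partial_{T_m}Q_j$, which (since $\deg_{T_m}\partial_{T_m}Q_j<\deg_{T_m}Q_j$) forces $\partial_{T_m}Q_j=0$ and is ruled out by the coordinate normalization. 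Hence $\Delta\ne 0$ as a polynomial; any $c$ with $\Delta(c)\ne 0$ then gives a line $L_c$ with $|X\cap L_c|=n$, and combined with the upper bound this yields $\deg X=n=\deg F_1$.
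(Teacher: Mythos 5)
The paper gives no proof of this lemma at all: it is listed among the background facts on affine algebraic sets and referred to the first chapter of \cite{Sh}, so there is nothing internal to compare your argument against. On its own merits your proof is correct and self-contained. The two points that genuinely need care are exactly the ones you flag. First, $\deg X=\deg F_1$ is only true under the paper's standing convention that the defining polynomial has no multiple irreducible factors (otherwise $T_1^2$ is already a counterexample); your reduction to the squarefree case is legitimate precisely because the paper imposes that convention. Second, since $K=\bar{\mathbb F}_p$ has positive characteristic, an irreducible factor $Q_j$ could a priori satisfy $\partial_{T_m}Q_j=0$ without being constant in $T_m$; your generic linear change of coordinates does rule this out, because $K$ is perfect, so an irreducible polynomial cannot have identically vanishing gradient (it would be a $p$-th power), and a generic direction then makes the directional derivative nonzero for all finitely many $Q_j$ simultaneously. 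Granting the standard facts you invoke --- that a surjective morphism onto $\mathbb A^{m-1}$ forces $\dim X\ge m-1$, and that the resultant $\operatorname{Res}_{T_m}(F,\partial_{T_m}F)$ specializes correctly at $c$ because $F$ is monic in $T_m$ --- the argument goes through and yields both $\operatorname{codim}X=1$ and $\deg X=\deg F_1$. A marginally lighter way to finish the separability step is to write $uF+v\,\partial_{T_m}F=h(T_1,\dots,T_{m-1})\neq 0$ after clearing denominators in $K(T_1,\dots,T_{m-1})[T_m]$ and choose $c$ with $h(c)\neq 0$, which avoids the discriminant and the degree-drop issue for $\partial_{T_m}F$ entirely.
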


The famous {\em Bezout theorem} in the most elementary setting over the field $\C$ states that
$$\deg X\le \Pi_{i=1}^s \deg F_i.$$

\subsection{Finite fields' Bezout theorem}  
From now on the field $K$ is the algebraic closure $\bar{\mathbb F}_p$ of a finite field ${\mathbb F}_p$, where $p$ is a large prime number (see \cite{La}, Section V.5). 

We will use a version of Bezout's theorem over finite fields which can be deduced from its general form, e.g. \cite{Fu}, and is also explicitly stated and proved in \cite[Corollary 2.2]{LR}.

\begin{theorem}\label{bz} Let $K=\bar{\mathbb F}_p$ and the AAS\  $X$ in \eqref{X} is such that  $F_j \in {\mathbb F}_p[T_1, \dots, T_m]$, 
	$\deg F_j=d_j$,  $j=1,\dots, s$, and 
	$\dim X=r$. Then 
	$$ |X\cap{\mathbb F}_p^m |\le p^r \Pi_{i=1}^s d_i.$$
\end{theorem}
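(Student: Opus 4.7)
The plan is to reduce the bound to two standard ingredients from intersection theory over finite fields: a projective Bezout-type inequality that controls the total degree of the irreducible components of $X$, and a Schwartz--Zippel-style point count for each individual component.

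First I would pass to the projective closure by homogenising each $F_j$ to a form $\widetilde F_j$ of degree $d_j$ on $\mathbb{P}^m_K$ and setting $\bar X = \{\widetilde F_1 = \dots = \widetilde F_s = 0\}$; then $X \cap \mathbb{F}_p^m$ embeds into $\bar X(\mathbb{F}_p)$. Writing $\bar X = \bar X_1 \cup \dots \cup \bar X_l$ for the irredundant decomposition into $K$-irreducible components, of dimensions $r_i \le r$ and degrees $e_i$, the classical refined Bezout inequality gives
\[ \sum_{i=1}^{l} e_i \le \prod_{j=1}^{s} d_j, \]
see, e.g., Example~12.3.1 in \cite{Fu}. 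This controls the total degree of the intersection by the product of the degrees of the defining forms.

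Second I would establish the individual point count $|\bar X_i(\mathbb{F}_p)| \le e_i\, p^{r_i}$ for every component that is defined over $\mathbb{F}_p$. The cleanest route is induction on $r_i$ via a generic linear projection $\pi\colon \bar X_i \dashrightarrow \mathbb{P}^{r_i}_K$: after a suitable change of coordinates (available provided $p$ exceeds some constant depending only on $m$ and the $d_j$), $\pi$ is finite of degree at most $e_i$, so every fibre over an $\mathbb{F}_p$-point of the target contains at most $e_i$ geometric points, while the target contributes $p^{r_i}$ points. Restricting to the affine chart removes the negligible contribution of the hyperplane at infinity.

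Combining the two steps,
\[ |X \cap \mathbb{F}_p^m| \le \sum_{i=1}^{l} |\bar X_i(\mathbb{F}_p)| \le p^r \sum_{i=1}^{l} e_i \le p^r \prod_{j=1}^{s} d_j, \]
as claimed. The main obstacle, as usual in this kind of descent, is that the irredundant decomposition over $K = \bar{\mathbb{F}}_p$ need not descend to one over $\mathbb{F}_p$: some components $\bar X_i$ may only be defined over a Galois extension. Since each $\mathbb{F}_p$-point is fixed by the full Galois action, one groups the components into Galois orbits and counts jointly, which costs nothing for an upper bound because the sum of the degrees of an orbit equals the degree of its $\mathbb{F}_p$-descent. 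This Galois bookkeeping, together with checking that the generic projection in step two can be taken to be $\mathbb{F}_p$-rational, is the only non-formal ingredient; both are standard and are carried out in \cite{LR}.
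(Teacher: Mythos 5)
The paper does not actually prove Theorem~\ref{bz}: it takes it as known, pointing to the general Bezout theorem in \cite{Fu} and to \cite[Corollary 2.2]{LR}, where the statement is proved. Your two-step strategy --- the refined Bezout inequality to bound the total degree $\sum_i e_i$ of the irreducible components of $X$ by $\prod_j d_j$, a per-component point count $|\bar X_i(\mathbb{F}_p^{\,m})|\le e_i p^{r_i}$, and Galois-orbit bookkeeping for components not defined over $\mathbb{F}_p$ --- is precisely the argument behind that citation, so in substance you are reconstructing the proof the paper outsources rather than taking a different route.

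Two details of your second step do not work as written. First, the $\mathbb{F}_p$-rational generic linear projection you invoke is only guaranteed to exist when $p$ is large compared with $m$ and the $d_j$, a hypothesis absent from the theorem; and it is not available in the paper's application, where Proposition~\ref{01} only assumes $p>2M$ with $M=NRL/2$, which does not dominate any constant depending on $d$ (recall $m=(N-2)d$ with $d$ arbitrary). Second, a finite degree-$e_i$ projection onto $\mathbb{P}^{r_i}$ bounds the count by $e_i\,|\mathbb{P}^{r_i}(\mathbb{F}_p)|=e_i(p^{r_i}+\dots+1)$ rather than $e_i p^{r_i}$, and restricting to an affine chart of the source does not repair this, since affine $\mathbb{F}_p$-points of $\bar X_i$ need not map to affine points of the target. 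Both defects disappear if you replace the projection by the standard affine slicing induction: the claim is that $|Y\cap\mathbb{F}_p^m|\le \deg(Y)\,p^{\dim Y}$ for every algebraic set $Y\subset K^m$; reduce to an irreducible component $X_i$ of dimension $r_i$ and degree $e_i$; if $r_i=0$ there are at most $e_i$ points; if $r_i\ge1$ pick a coordinate $T_k$ non-constant on $X_i$, note that each of the $p$ slices $X_i\cap\{T_k=a\}$, $a\in\mathbb{F}_p$, is a proper closed subset of $X_i$, hence of dimension $\le r_i-1$ and degree $\le e_i$ (Bezout with a hyperplane), and conclude by induction that $|X_i\cap\mathbb{F}_p^m|\le p\cdot e_i p^{r_i-1}=e_i p^{r_i}$ for every prime $p$. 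With that substitution your outline is complete and agrees with the proof in \cite{LR}.
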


\subsection{Preliminary result}
Let $q_1,\ldots, q_s$, $s\ge 1$, be  polynomials of degree at most two
in $m\ge s$ variables,  $q_i\in \Z[X_1,\ldots, X_m]$,
with $q_i(0)=0,$\linebreak $ i=1,...,s$. 
Consider the geometric quadrics $Q_i=\{x\in \R^m: q_i(x)=0\}$ and their intersection $Q=\cap_{i=1}^s Q_i$. The latter is not empty since $0\in Q$. 

Let
$B_M^{m}\subset \mathbb R^{m}$ 
be  an open cube $\{\left|x\right|_\infty<M\}$  with some  $M\ge 1$.  Consider the set
$$
S_m(M,Q) =Q\cap \Z^{m}\cap B_{M}^m .
$$

Let $p$ be a prime and  $q^{(p)}_{i}\in {\mathbb F}_p[X_1,\ldots, X_m]$ denote the polynomials $q_{i}\mod \,p$ over the finite field ${\mathbb F}_p$. 
Consider the sets 
$$
Q^{(p)}_{i}=\{x\in  K^m: q^{(p)}_{i}(x)=0\}
$$
and their intersection $Q^{(p)}=\cap_{i=1}^s Q^{(p)}_{i}$  (recall that now $K=\bar{\mathbb F}_p $ is the algebraic closure of ${\mathbb F}_p$). 
We will be interested mainly in  the cardinality of $Q^{(p)}({\mathbb F}_p):=Q^{(p)}\cap{\mathbb F}_p^m$
as a tool to estimate $|S_m(M,Q)| $.

\begin{proposition}\label{01} Let $M\ge 1$ and  suppose that  a  prime $p>2M $  satisfies $p=2M(1 +r(M))$, where $r(M)>0$. 
	Suppose also that 
	$\deg q_i\le2$ for each $i$ and that 
	the AAS $Q^{(p)}$   is of dimension $m-s\;$ (and of codimension s, that is,  the $s$ quadrics $Q^{(p)}_{i}$ intersect properly):
	\be\label{*} 
	\quad \quad\quad\quad \quad \dim \,Q^{(p)}=m-s.\hskip 3 cm
	\ee
	Then 
	\begin{equation}
		\label{1.1} |S_m(M,Q)|  \le   2^m\big(1+r(M)\big)^{m-s}M^{m-s} \, .
	\end{equation}
\end{proposition}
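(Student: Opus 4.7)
The plan is to count integer points via reduction modulo a well-chosen prime and then invoke the finite-field Bezout bound of Theorem~\ref{bz}. Concretely, since $p>2M$, the reduction map $\pi:\Z^m\cap B_M^m\to\F_p^m$ obtained by sending each coordinate to its representative in $\{-\lfloor p/2\rfloor,\dots,\lfloor p/2\rfloor\}\subset\F_p$ is injective: if $x,y\in\Z^m\cap B_M^m$ satisfy $\pi(x)=\pi(y)$, then $|x_i-y_i|<2M<p$ and the divisibility $p\mid x_i-y_i$ forces $x_i=y_i$. First I would verify that $\pi$ maps $S_m(M,Q)$ into $Q^{(p)}(\F_p):=Q^{(p)}\cap\F_p^m$: since each $q_i$ has integer coefficients and $q_i(x)=0$ in $\Z$, reducing mod $p$ gives $q_i^{(p)}(\pi(x))=0$ in $\F_p$, so $\pi(x)\in Q_i^{(p)}$ for every $i$, hence $\pi(x)\in Q^{(p)}(\F_p)$.

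Next, since $\deg q_i^{(p)}\le\deg q_i\le 2$ (reduction can only lower the degree), and since hypothesis \eqref{*} gives $\dim Q^{(p)}=m-s$, Theorem~\ref{bz} applied with $d_i\le 2$ and $r=m-s$ yields
\[
|S_m(M,Q)|\le |Q^{(p)}(\F_p)|\le p^{m-s}\cdot 2^s.
\]
Substituting $p=2M(1+r(M))$ produces
\[
|S_m(M,Q)|\le 2^s\cdot\bigl(2M(1+r(M))\bigr)^{m-s}=2^m(1+r(M))^{m-s}M^{m-s},
\]
which is exactly \eqref{1.1}.

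There is essentially no obstacle in the proof itself once the finite-field machinery has been set up; the whole content is the combination of injectivity of reduction modulo $p$ (guaranteed by $p>2M$) with the intersection-dimension hypothesis \eqref{*} that allows Bezout to be applied with the sharp exponent $m-s$ rather than $m$. The hypothesis $q_i(0)=0$ is not needed in the counting step itself and only serves to make $0$ a distinguished point of $Q$ for later use; the hypothesis that $q_i\in\Z[X_1,\dots,X_m]$ is used crucially to make sense of the reduction. The genuinely non-trivial work --- verifying the proper-intersection condition \eqref{*} for the specific quadrics arising from the incidence matrix $\alpha^\gF$ --- is not part of this proposition and will have to be discharged separately in the subsequent application to Lemma~\ref{l:intersection}.
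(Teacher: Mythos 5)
Your proposal is correct and follows exactly the paper's own argument: injectivity of coordinatewise reduction modulo $p$ on the box $B_M^m$ (using $p>2M$), inclusion of the image in $Q^{(p)}\cap\mathbb{F}_p^m$, and the finite-field Bezout bound of Theorem~\ref{bz} with $r=m-s$ and $d_i\le 2$, followed by the substitution $p=2M(1+r(M))$. No further comment is needed.
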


	By the Bertrand's postulate, for any $M\geq 1$ there is a $p$ satisfying $2M<p<4M$, so when applying Proposition~\ref{01} we will chose 
	\be\lbl{r<1}
	r(M)<1.
	\ee
	Moreover, by the Prime number theorem, for large $M$ one can chose $r(M)=o(1)$. 

{\em Proof of Proposition~\ref{01}.}  Let
$ \Pi: S_m(M,Q)\longrightarrow {\mathbb F}_p^m $
be defined by
$$\Pi(x_1, \ldots, x_m)=(x_1 \!\!\mod p, \ldots, x_m\!\!\mod p).$$
Then $ \Pi$ is injective and its image is contained in $Q^{(p)}\cap{\mathbb F}_p^m\subset {\mathbb F}_p^m$. Indeed, the last assertion is clear and 
the injectivity is established as  follows: if  
\begin{equation*}\hskip1 cm (x'_1\!\mod p, \ldots, x'_m \!\mod p) =(x_1\!\mod p, \ldots, x_m\!\mod p)\hskip2 cm \end{equation*}
but $x'\ne x$, then for some $  i\in\{1,...,m\}$ we have  $x'_i\!\mod p =x_i\!\mod p $, but $\,x_i'\ne x_i$. 
Consequently, $|x_i-x'_i|\ge p>2M$ which contradicts the condition $x_i,x'_i\in B_{M}^m$.
Apllying then Theorem \ref{bz} to $X=Q^{(p)}$ we get the conclusion since
\begin{equation*}
	|S_m(M,Q)|\le |Q^{(p)}({\mathbb F}_p)|\le
	2^sp^{m-s}= 
		2^m\big(1+r(M)\big)^{m-s}M^{m-s}\,.
\end{equation*}
\qed

\subsection{Main estimate for  $N=3$ and $4$}

Now we pass to the proof of Lemma~\ref{l:intersection} and denote
$
\big|  \tilde Q_L(z_1,v)  \cap B_R^{(N-2)d} \big| = s(R,{\tilde { Q}},L).
$
Consider the set
\begin{equation*}
	\begin{split}
		\quad S'\big(R,{\tilde {\mathcal Q}},L\big) &=
		{\tilde {\mathcal Q}}\cap \Z^{(N-2)d}\cap B^{(N-2)d}_{RL}\,,\qquad {\tilde {\mathcal Q}}=\tilde Q(Lz_1,Lv), 
	\end{split}
\end{equation*}
and denote by $s'(R,{\tilde {\mathcal Q}},L)$  its cardinality. Then 
$$
s' (R,{\tilde {\mathcal Q}},L) = s(R,{\tilde { Q}},L)\,,
$$
since 
the map 
$(z_2,\ldots,z_{N-1}) \mapsto (L z_2,\ldots,L z_{N-1})$ is  a
bijection between  the sets 
$\tilde Q_L(z_1,v)  \cap B_R^{(N-2)d} $ and $ S'(R,{\tilde {\mathcal Q}},L)$.

Let us estimate $s'(R,{\tilde {\mathcal Q}},L)$ through Proposition~\ref{01} with
	$m=(N-2)d$ and $s=N-2$, where   $N=3$ or $4$.  
	To this end it suffices to find $M\geq RL$ and $p>2M$ such that  assumption \eqref{*} is fulfilled  
	for any $(z_1,v) \in B^{2d}_{(N-1)R}$ satisfying $(z_1,v)\in  Q^0_{1\,L} \cap A_2$. 
	Lemma~\ref{01-34} below establishes this for $M=NRL/2$ and any $p>2M$. Then, applying \eqref{1.1} with 
	$r(M)<1$ (see \eqref{r<1}), we conclude the proof of Lemma~\ref{l:intersection}.
	
	For a prime $p$ and $a,b\in \mathbb{F}_p^{d}$ let us consider  algebraic sets $\tilde Q_{j}^{(p)}$  over  $K=\bar{\mathbb F}_p$: 
$$
\tilde Q_{j}^{(p)} (a,b): = \{(z_2,\dots,z_{N-2})\in  K^{(N-2)d}: \,
q^{(p)}_{j}(z_2,\ldots,z_{N-2}; a,b)=0\} ,
$$ 
where   $q^{(p)}_{j}(z_2,\ldots,z_{N-2}; a,b)$ are    the
	residues modulo $p$ of the polynomials  $q_{j}(z_2,\ldots,z_{N-2}; a,b)$, defined by \eqref{eq:q_tilde}. We set $\tilde Q^{(p)}=\cap_{1<j<N} \tilde Q_j^{(p)}$ for the intersection of the algebraic  sets.

\begin{lemma}\label{01-34}
	Let  $N\in\{3,4\}$, 
		$(z_1,v)\in Q_{1 \, L}^0\cap A_2$ (see \eqref{A2}) and let $p $ be
		a prime satisfying $p> \max(|Lz_1|_\infty, |Lv|_\infty)$. Then 
	\begin{equation}\label{1.1-34}  \quad \quad\quad\quad \quad    \dim
		\,\tilde Q^{(p)}(Lz_1,Lv)=(N-2)(d-1) \ .
		\hskip 3 cm \end{equation}
\end{lemma}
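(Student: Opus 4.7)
My proof would proceed as follows. The statement asks us to show that the intersection of the $N-2$ quadric hypersurfaces $\tilde Q_j^{(p)}$, $1<j<N$, in $\mathbb{A}^{(N-2)d}_{\bar{\mathbb{F}}_p}$ is a complete intersection of the expected codimension. By Krull's principal ideal theorem the codimension of the intersection is at most $N-2$, so its dimension is at least $(N-2)(d-1)$; the task is therefore to rule out any irreducible component of strictly larger dimension. The hypothesis $p>\max(|Lz_1|_\infty,|Lv|_\infty)$ plays a two-fold role: the nonzero integer vectors $Lz_1,Lv$ stay nonzero in $\mathbb{F}_p^d$ after reduction modulo $p$ (since they have $\ell^\infty$-norm smaller than $p$), and the entries $\alpha_{ij}\in\{-1,0,1\}$ of the incidence matrix are unaffected by the reduction.

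For the case $N=3$ there is only the single polynomial $q_2^{(p)}$. Using \eqref{eq:q_tilde} with $j=2$ and $\alpha_{22}=0$, one has
\[
q_2(z_2;z_1,v)=-\alpha_{12}(z_1\cdot z_2)+\alpha_{23}(v\cdot z_2)-\alpha_{23}\alpha_{12}|z_2|^2.
\]
I would carry out a short case analysis on whether $\alpha_{12}$ or $\alpha_{23}$ vanishes --- constrained by the fact that no row of $\alpha$ may vanish, which in the $3\times 3$ setting already forces the matrix to be irreducible --- combined with $Lz_1,Lv\not\equiv 0\pmod p$, to check that $q_2^{(p)}(\,\cdot\,;Lz_1,Lv)$ is a nonzero polynomial in $z_2$. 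Its zero set is then a hypersurface of dimension $d-1$, as required.

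For the case $N=4$ I would proceed in three steps: (i) as above, check by direct inspection of \eqref{eq:q_tilde} that both $q_2^{(p)}(\,\cdot\,;Lz_1,Lv)$ and $q_3^{(p)}(\,\cdot\,;Lz_1,Lv)$ are nonzero polynomials, so each $\tilde Q_j^{(p)}$ has dimension $2d-1$; (ii) invoke the irreducibility result (Lemma~\ref{l:irr}, proved earlier in this appendix) to conclude $q_2^{(p)},q_3^{(p)}$ are irreducible over $\bar{\mathbb{F}}_p$, making $\tilde Q_2^{(p)},\tilde Q_3^{(p)}$ irreducible varieties; (iii) invoke the linear independence result (Lemma~\ref{l:ind}) to ensure $q_2^{(p)}$ and $q_3^{(p)}$ are not proportional. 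Lemma~\ref{dec}(i) then forces $\tilde Q_2^{(p)}\neq\tilde Q_3^{(p)}$, so the two irreducible varieties share no common irreducible component, and their intersection has codimension exactly $2=N-2$, hence dimension $2(d-1)$.

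The delicate point --- the main obstacle --- is the transfer of the irreducibility and linear independence statements of Lemmas~\ref{l:ind} and~\ref{l:irr}, which are proved over a generic algebraically closed field $K$, to the specific field $\bar{\mathbb{F}}_p$ at hand. The transfer is clean because the coefficients of $q_j^{(p)}$ are either $\pm 1,0$ or components of $Lz_1,Lv$, all of absolute value strictly less than $p$, so no nonzero integer coefficient can vanish modulo $p$; but care is needed in the case analysis to ensure that none of the structural assumptions (nonvanishing of the quadratic part, rank of the quadratic form, etc.) used in the proofs of those lemmas fails in characteristic $p$.
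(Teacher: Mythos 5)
Your proof is correct, and it rests on the same two pillars as the paper's argument --- the linear independence of the polynomials $q^{(p)}_j(\cdot,Lz_1,Lv)$ (Lemma~\ref{l:ind}) and their irreducibility (Lemma~\ref{l:irr}) --- but your concluding step for $N=4$ takes a genuinely shorter route. The paper first proves a separate statement (Lemma~\ref{l}) characterizing when two linearly independent quadrics meet in codimension one (namely, iff they share an affine linear factor), and then runs a case analysis on the degrees of $q_2^{(p)},q_3^{(p)}$ (both linear, both quadratic, mixed). You instead observe that $\tilde Q_2^{(p)}$ and $\tilde Q_3^{(p)}$ are \emph{distinct irreducible hypersurfaces} --- distinct because non-proportional irreducible polynomials define distinct hypersurfaces by part i) of Lemma~\ref{dec} --- so the intersection is a proper closed subset of the irreducible $\tilde Q_2^{(p)}$ and hence, by Lemma~\ref{dim}, has dimension strictly less than $2d-1$; combined with the Krull lower bound (the codimension of each component is at most the number of equations, and the intersection is nonempty since both polynomials vanish at the origin) this pins the codimension at exactly two. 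This handles the linear, quadratic and mixed cases uniformly and renders Lemma~\ref{l} unnecessary. Two small remarks: the ``delicate point'' you flag about transferring Lemmas~\ref{l:ind} and~\ref{l:irr} from a generic algebraically closed field to $\bar{\mathbb F}_p$ is moot, since the paper states and proves them directly for $K=\bar{\mathbb F}_p$ and for the reduced polynomials $q^{(p)}_j(\cdot,Lz_1,Lv)$, under exactly the hypothesis $Lz_1,\,Lv\ne0$ in $K^d$ that your condition $p>\max(|Lz_1|_\infty,|Lv|_\infty)$ guarantees; and for $N=3$ your direct case analysis on $\alpha_{12},\alpha_{23}$ (using that no row of $\alpha$ vanishes) reproduces what the paper obtains by citing Lemma~\ref{l:ind}, a single ``linearly independent'' polynomial being simply a nonzero one, after which Lemma~\ref{deg} gives the dimension count.
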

The assumption $p> \max(|Lz_1|_\infty, |Lv|_\infty)$ ensures that $Lz_1$ and $Lv$ are different from zero in $K^d$.
	In particular, for  $(z_1,v) \in B^{2d}_{(N-1)R}$ this assumptions is satisfied if $p>2M$  with $M=NRL/2$.

{\em Proof of Lemma~\ref{01-34}}. 
Let $N=3$. Then $N-2=1$ and $\tilde Q^{(p)}$ is given by the unique equation $q_2^{(p)}(z_2;L z_1,Lv)=0$, for a fixed $(z_1,v)$.  By Lemma~\ref{l:ind} the equation is non-trivial, so 
the conclusion  follows from Lemma~\ref{deg}. 

{$  N=4.$} The codimension of the intersection of  two quadrics is at most two. We have to show that it is two 
(and not one). The result will  follow from the next three lemmas.

{\begin{lemma}\label{l}
		Let $\mathcal{Q}_1=\{\tilde q_1=0\},\mathcal{Q}_2=\{\tilde q_2=0\}$ be two  linearly independent quadrics over $K$.   Then the codimension of $\mathcal{Q}_1\cap \mathcal{Q}_2$ is one if and only if  $\tilde q_1$ and $\tilde q_2$   have a mutual affine linear factor $l(x)$.\end{lemma}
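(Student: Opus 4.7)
\medskip

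\noindent\emph{Plan of proof.} The forward implication is essentially immediate. If $\tilde q_1 = l\cdot l_1'$ and $\tilde q_2 = l\cdot l_2'$ share a common non-constant linear factor $l$, then the hyperplane $H=\{l=0\}$ is contained in both $\mathcal{Q}_1$ and $\mathcal{Q}_2$, hence in $\mathcal{Q}_1\cap\mathcal{Q}_2$. Since $\mathcal{Q}_1\ne K^m$, the codimension of the intersection is at least one; and since $\operatorname{codim} H = 1$ and $H\subseteq \mathcal{Q}_1\cap\mathcal{Q}_2$, by Lemma~\ref{dim} the codimension of the intersection is at most one. So it equals one.

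For the reverse direction, assume $\operatorname{codim}(\mathcal{Q}_1\cap\mathcal{Q}_2)=1$. Pick an irreducible component $X$ of $\mathcal{Q}_1\cap\mathcal{Q}_2$ of codimension one, so $X$ is an irreducible hypersurface. The plan is to show that $X$ must coincide with a linear component of both $\mathcal{Q}_1$ and $\mathcal{Q}_2$. By Lemma~\ref{dec}(ii) each $\mathcal{Q}_i$ is either an irreducible quadric or a union of two affine hyperplanes $\{l_{i,1}=0\}\cup\{l_{i,2}=0\}$ (with $\tilde q_i=l_{i,1}l_{i,2}$).

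The first step is to rule out that either of the $\mathcal{Q}_i$ is irreducible. Suppose for contradiction that $\mathcal{Q}_1$ is irreducible. Since $X\subseteq\mathcal{Q}_1$ is irreducible of the same dimension as $\mathcal{Q}_1$, Lemma~\ref{dec}(i) gives $X=\mathcal{Q}_1$, so $\mathcal{Q}_1\subseteq\mathcal{Q}_2$. If $\mathcal{Q}_2$ were irreducible as well, Lemma~\ref{dec}(i) would force $\mathcal{Q}_1=\mathcal{Q}_2$, hence (by the uniqueness of the polynomial up to scalar) $\tilde q_1\parallel\tilde q_2$, contradicting linear independence. Otherwise $\mathcal{Q}_2=\{l_{2,1}=0\}\cup\{l_{2,2}=0\}$; irreducibility of $\mathcal{Q}_1$ then forces $\mathcal{Q}_1\subseteq\{l_{2,k}=0\}$ for some $k$, which contradicts the second half of Lemma~\ref{dec}(ii) (an irreducible quadric cannot be contained in a hyperplane). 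Thus $\mathcal{Q}_1$ and symmetrically $\mathcal{Q}_2$ are both reducible, and we can write $\tilde q_i=l_{i,1}l_{i,2}$ for $i=1,2$.

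It remains to extract a common factor. Since $X$ is irreducible and $X\subseteq\mathcal{Q}_1=\{l_{1,1}=0\}\cup\{l_{1,2}=0\}$, irreducibility forces $X\subseteq\{l_{1,j}=0\}$ for some $j$; equality of dimensions plus Lemma~\ref{dec}(i) gives $X=\{l_{1,j}=0\}$. Applying the same argument to $\mathcal{Q}_2$ yields $X=\{l_{2,k}=0\}$ for some $k$. Hence $\{l_{1,j}=0\}=\{l_{2,k}=0\}$, so by Lemma~\ref{dec}(i) the two affine linear polynomials $l_{1,j}$ and $l_{2,k}$ are proportional, giving the common affine linear factor. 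The main subtlety I expect is the bookkeeping with Lemma~\ref{dec}(i)--(ii) and making sure the ``irreducible $\Rightarrow$ contained in a hyperplane is impossible'' step is applied in the right cases; the rest is a short irredundant-decomposition argument.
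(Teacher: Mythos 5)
Your proof is correct and follows essentially the same route as the paper's: rule out irreducibility of either quadric, decompose both as unions of two hyperplanes via Lemma~\ref{dec}.\,ii), and extract a common hyperplane (the paper does this by observing that some $H_i\cap H_j'$ must have codimension one, whereas you trace an irreducible codimension-one component of the intersection into one hyperplane of each quadric --- equivalent bookkeeping). One small citation slip: the step ``an irreducible quadric cannot be \emph{contained in} a hyperplane'' follows from Lemma~\ref{dec}.\,i) (or from unique factorization and a degree count), not from the parenthetical in Lemma~\ref{dec}.\,ii), which asserts that an irreducible quadric cannot \emph{contain} a hyperplane.
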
}

{\em Proof.} Let the codimension of the intersection be one. In this case if 
one of $\mathcal{Q}_1,\mathcal{Q}_2 $ is irreducible, then $\mathcal{Q}_1= \mathcal{Q}_2 $
by Lemma \ref{dim} with $Y=\mathcal{Q}_1 \cap \mathcal{Q}_2$. 
However this is impossible by Lemma \ref{dec}.\,i)  since $\tilde q_1$ and $\tilde q_2$ are independent. Therefore by Lemma~\ref{dec}.\,ii) 
$\mathcal{Q}_1=H_1\cup H_2$ and $  \mathcal{Q}_2=H'_1\cup H'_2$, with hyperplanes
$H_1, \dots, H'_2$. If all $H_i\cap H'_j$ are of codimension two then
$${\rm codim}\, Q_1\cap Q_2= {\rm codim}\,\left(\cup (H_i\cap H'_j)\right)=\min({\rm codim}\, H_i \cap H'_j)=2.$$  Therefore, at least one of  $H_i\cap H'_j$ is of codimension one and then we have $\ker (l(x))=H_i\cap H'_j\subset \mathcal{Q}_1\cap \mathcal{Q}_2$ for
an affine linear $l(x)$. Hence $l(x)$ divides both $\tilde q_1$ and $\tilde q_2$ by Lemma \ref{dec}.\,ii).

The inverse statement is obvious. 
\qed

\begin{lemma}\label{l:ind}
	For any $N>2$, if the matrix $\alpha$ is
		irreducible and $(z_1,v)\in  Q_{1\,L}^0\cap A_2$  is such that $Lz_1,\, Lv \ne 0$ in $K^d$, then  the polynomials $q^{(p)}_j(\cdot, Lz_1, Lv)$, $1< j< N$ are linearly independent    over $K$. 
	In particular, each $q_j^{(p)}$ is a non-zero polynomial. 
\end{lemma}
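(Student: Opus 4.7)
The plan is to assume a linear relation
\[
\sum_{1<j<N} c_j\, q_j^{(p)}(z_2,\ldots,z_{N-1};Lz_1,Lv) \equiv 0
\]
in $K[z_2,\ldots,z_{N-1}]$ and deduce $c_j=0$ for every $j$ by matching coefficients of distinct monomials in the $(N-2)d$ scalar components. Writing $q_j = z_j\cdot\bigl(w_j+\sum_{1<i<N} M_{ji}\, z_i\bigr)$ with $w_j := \alpha_{j1}Lz_1 + \alpha_{jN}Lv \in K^d$ and $M_{ji} := \alpha_{ji} - \alpha_{jN}\alpha_{1i} \in K$, and separating the coefficients of the monomials $z_{j,l}$, $z_{j,l}^2$, and $z_{j,l}z_{i,l}$ (for $i\ne j$), one extracts three families of necessary conditions: \textbf{(A)} $c_j w_j = 0$ in $K^d$; \textbf{(B)} $c_j\,\alpha_{jN}\alpha_{1j}=0$ in $K$; \textbf{(C)} $c_j M_{ji} + c_i M_{ij} = 0$ in $K$ for every $i\ne j$.

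Next I would associate to $\alpha$ the support graph $G$ on $\{1,\ldots,N\}$ whose edges are the pairs $\{i,j\}$ with $\alpha_{ij}\ne 0$: irreducibility of $\alpha$ is exactly the connectedness of $G$, while the no-zero-row hypothesis means every vertex of $G$ has degree at least one. Let $V_0 := \{j\in\{2,\ldots,N-1\} : j \text{ is adjacent in } G \text{ to } 1 \text{ or } N\}$. For $j\in V_0$ adjacent to exactly one of $1,N$, the vector $w_j$ reduces to $\pm Lz_1$ or $\pm Lv$, which is nonzero in $K^d$ because $p>\max(|Lz_1|_\infty,|Lv|_\infty)$; then \textbf{(A)} forces $c_j=0$. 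For $j\in V_0$ adjacent to both $1$ and $N$, the scalar $\alpha_{jN}\alpha_{1j}=\pm 1$ is nonzero in $K$, so \textbf{(B)} forces $c_j=0$. This second case is precisely what is needed to cover the arithmetic contingency that $w_j=\pm Lz_1\pm Lv$ might accidentally degenerate in $K^d$, and thus \textbf{(A)} and \textbf{(B)} jointly dispose of all of $V_0$.

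Finally, for $j\in\{2,\ldots,N-1\}\setminus V_0$ one has $\alpha_{jN}=0$, hence $M_{ji}=\alpha_{ji}$, so every $G$-edge from $j$ to some $i$ satisfies $M_{ji}\ne 0$ in $K$. I would then propagate the vanishing in breadth-first layers, setting
\[
V_{k+1} := V_k\cup\{j\in\{2,\ldots,N-1\} : \alpha_{ji}\ne 0 \text{ for some } i\in V_k\},
\]
and applying \textbf{(C)} with the already-known $c_i=0$ together with the just-noted $M_{ji}\ne 0$ to conclude $c_j=0$ for every new $j\in V_{k+1}\setminus V_k$. The equality $\bigcup_k V_k = \{2,\ldots,N-1\}$ follows from connectedness of $G$: any component of the induced subgraph $G|_{\{2,\ldots,N-1\}}$ disjoint from $V_0$ would be disconnected from $\{1,N\}$ in $G$ as well, contradicting irreducibility. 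The main obstacle to watch for is an arithmetic one --- ensuring that the relevant integer scalars $\alpha_{jN}\alpha_{1j}$, $M_{ji}$ and the integer vectors $Lz_1,Lv$ remain nonzero after reduction modulo $p$ --- but the crude bounds $p>\max(|Lz_1|_\infty,|Lv|_\infty)$ and $|\alpha_{ij}|\le 1$ are exactly what is needed for this.
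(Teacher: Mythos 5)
Your proof is correct and follows essentially the same route as the paper's: decompose the identity $\sum_{1<j<N} c_j q_j^{(p)}\equiv 0$ into homogeneous components, first kill the coefficients $c_j$ for indices $j$ adjacent (in the support graph of $\alpha$) to $1$ or $N$, then propagate the vanishing through the graph using irreducibility. Your layers $V_k$ are the paper's sets $E_{k+1}$, and your condition (C) is the paper's relation $(c_j-c_i)\alpha_{ji}=0$. The one genuine difference is your condition (B), read off from the coefficient of $z_{j,l}^2$, i.e.\ from the diagonal entry $\alpha_{jj}-\alpha_{jN}\alpha_{1j}=-\alpha_{jN}\alpha_{1j}$ of the quadratic part. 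The paper treats an index $j$ adjacent to both $1$ and $N$ by asserting that the linear coefficient $\alpha_{j1}Lz_1+\alpha_{jN}Lv=\pm Lz_1\pm Lv$ is nonzero in $K^d$. Over $\Z$ this is clear (vanishing would give $z_1\cdot v=\mp|z_1|^2=0$ over $\R$, contradicting $(z_1,v)\in A_2$), but after reduction mod $p$ one only gets $|Lv|^2\equiv 0\pmod p$, which does not force $Lv\equiv 0$: for instance $d=2$, $p=5$, $Lz_1=(1,2)$, $Lv=(4,-2)$ satisfies $Lz_1\cdot Lv=0$ and $Lz_1+Lv\equiv 0$ with both vectors nonzero mod $5$. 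Your condition (B) disposes of exactly this case, since there $\alpha_{jN}\alpha_{1j}=\pm1\ne 0$ in $K$; so at that one point your argument is a genuine (and welcome) strengthening of the one in the paper. The remaining steps --- the nonvanishing of $w_j$ when $j$ is adjacent to exactly one of $1,N$, the identity $M_{ji}=\alpha_{ji}$ for $j\notin V_0$, and the connectedness argument showing $\bigcup_k V_k=\{2,\ldots,N-1\}$ --- all check out.
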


{\it Proof.}
Consider a linear combination $\sum_{1<j<N}c_jq^{(p)}_j$. By the homogeneity in $(z_2,\ldots,z_{N-1})$ it vanishes identically if and only if
\begin{equation}\label{eq:lin_comb}
	\begin{split}
		\sum_{1<j<N}c_jz_j\cdot(\alpha_{j1} (Lz_1)+\alpha_{jN} (Lv))\equiv 0\,,\\
		\sum_{1<i,j<N}c_j(\alpha_{ji}-\alpha_{jN}\alpha_{1i})z_j\cdot z_i\equiv 0\,.
	\end{split}
\end{equation}
Arguing by induction and using that the matrix $\alpha$ is irreducible we  construct a partition $E_0, \dots, E_M$, $M\ge1$, of the set 
$\{1,\ldots,N\}$ such that $E_0 =\{1,N\}$ and for $n\ge1$, 
$$
E_n =\{ j: \alpha_{jl} =0\; \forall\, l\in E_{n'}, \; n' \le n-2,\;\; \text{and $\exists\, l'\in E_{n-1}$\; such that \, $\alpha_{jl'} \ne0\}$. 
}
$$
Since  $(z_1,v)\in Q_{1\,L}^0$ and $Lz_1,\, Lv \ne 0$ in $K^d$, then the term in brackets in
the first line of \eqref{eq:lin_comb} is not identically zero for each  $j\in E_1$, so  $c_j=0$ for every $j\in E_1$. Using this
in the second line of \eqref{eq:lin_comb} we get:
$$
\sum_{n=2}^M\sum_{m=n-1}^M
\sum_{j\in
	E_n}\sum_{i\in E_m}c_j\alpha_{ji}z_j\cdot z_i\equiv0\,.\\ 
$$
This relation holds  if and only if
$
(c_j-c_i)\alpha_{ji}=0
$
for all $j\in E_n$, $2\le n\le M$, and $i\in E_m$, $n-1\le m\le M$.
We know that  $c_j=0$ if $j\in E_1$.
Starting from $n=2$ and arguing  by induction in $n$ we find that if $c_i=0$ for all $i\in E_{n-1}$, then $c_j=0$ for all
$j\in E_n$. Indeed,  for any $j\in E_n$ there exists at least one
$i\in E_{n-1}$ such that $\alpha_{ji}\ne0$  by the definition of $E_i$, so relation 
$(c_j-c_i)\alpha_{ji}=0$ implies that $c_j=0$ if $j\in E_n$. That is,  $c_j\equiv0$. 
\qed
\medskip

\begin{lemma}\label{l:irr}
	For any $N>2$, if the matrix $\alpha$ is
		irreducible and $(z_1,v)\in  Q_{1\,L}^0\cap A_2$ is such that $Lz_1,\, Lv \ne 0$ in $K^d$, then the polynomials $q^{(p)}_j(\cdot, Lz_1, Lv)$, $1< j< N$, are irreducible.
\end{lemma}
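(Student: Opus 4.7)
The plan is to reduce the irreducibility of $q_j^{(p)}(\cdot; Lz_1, Lv)$ to a rank computation for its homogeneous quadratic part. First I would split the polynomial as
$$q_j^{(p)}(z_2,\ldots,z_{N-1}) = \ell_j(z_j) + Q_j^{(2)}(z),$$
where $\ell_j(z_j) = z_j \cdot (\alpha_{j1} L z_1 + \alpha_{jN} L v)$ is linear in $z_j$ and
$$Q_j^{(2)}(z) = z_j \cdot \sum_{1<i<N} c_i z_i, \qquad c_i := \alpha_{ji} - \alpha_{jN}\alpha_{1i}$$
(so $c_j = -\alpha_{jN}\alpha_{1j}$, by skew-symmetry of $\alpha$). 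Since $\deg q_j^{(p)} \le 2$, any factorization $q_j^{(p)} = f g$ with $\deg f, \deg g \ge 1$ forces $\deg f = \deg g = 1$, and the product of the top-degree parts reconstructs $Q_j^{(2)}$. Hence if $Q_j^{(2)}$ is irreducible as a homogeneous form, so is $q_j^{(p)}$; and if $Q_j^{(2)} \equiv 0$, then $q_j^{(p)}$ has degree at most one, is nonzero by Lemma~\ref{l:ind}, and is therefore irreducible automatically.

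Next I would compute the rank of $Q_j^{(2)}$ as a quadratic form on $K^{(N-2)d}$. Setting $w_j := \sum_{1<i<N} c_i z_i \in K^d$, one has $Q_j^{(2)}(z) = z_j \cdot w_j$, so $Q_j^{(2)}$ factors through the linear map $\phi: K^{(N-2)d} \to K^d \times K^d$, $\phi(z) = (z_j, w_j)$. If some $c_i \ne 0$ for $i \ne j$, $1 < i < N$, then $\phi$ is surjective: $z_j$ is free, and varying $z_{i_0}$ for $c_{i_0} \ne 0$ sweeps out all of $K^d$ in the second factor. Thus $\mathrm{rk}\,Q_j^{(2)}$ equals the rank of the pairing $x \cdot y$ on $K^{2d}$, namely $2d \ge 6$. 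In the complementary case where $c_i = 0$ for every $i \ne j$ with $1 < i < N$, we have $Q_j^{(2)} = c_j |z_j|^2$, whose rank is $d \ge 3$ when $c_j \ne 0$ and zero otherwise (the latter being the fully degenerate case already dealt with above).

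To conclude, I would invoke the standard fact that over any field of characteristic different from two, a homogeneous quadratic polynomial of rank $\ge 3$ is irreducible: a factorization $Q = \ell_1 \ell_2$ into linear forms forces the Hessian to equal $v_1 v_2^{T} + v_2 v_1^{T}$ (with $v_i = \nabla \ell_i$), hence to have rank at most two. Applied to $Q_j^{(2)}$ in the two non-trivial cases above, this closes the argument in all cases.

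The main point to watch is the interplay between the standing hypothesis $d \ge 3$ of Section~\ref{sec:numbertheory} and the rank bound. Over the algebraically closed field $K = \bar{\mathbb F}_p$ a rank-$2$ quadratic form such as $x_1^2 + x_2^2 = (x_1 + i x_2)(x_1 - i x_2)$ is always reducible, so the hypothesis $d \ge 3$ is genuinely needed in the ``pure $|z_j|^2$'' branch; the mixed-terms branch has room to spare (rank $2d \ge 6$). The irreducibility of $\alpha$ and the assumption $(z_1, v) \in Q^0_{1\,L} \cap A_2$ enter only indirectly, through Lemma~\ref{l:ind}, to guarantee that $q_j^{(p)} \not\equiv 0$ in the degenerate sub-case where both the quadratic part and the variable coefficients vanish.
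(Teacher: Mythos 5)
Your argument is correct and complete for $d\ge3$, and it takes a genuinely different route from the paper: you reduce irreducibility to a rank bound on the homogeneous quadratic part of $q^{(p)}_j$ (a product of two affine linear polynomials has top part of rank at most two, while $Q^{(2)}_j$ has rank $2d$ or $d$ in your two non-degenerate branches), whereas the paper restricts $q^{(p)}_j$ to explicit $2$- and $4$-dimensional coordinate planes and verifies by hand, via Lemma~\ref{dec}, that the resulting affine conic cannot contain a hyperplane section. For $d\ge3$ your version is cleaner and makes transparent that the hypotheses on $\alpha$ and $(z_1,v)$ are only needed, via Lemma~\ref{l:ind}, to exclude $q^{(p)}_j\equiv0$.

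There is, however, a genuine gap: the lemma must hold for $d=2$. Appendix~\ref{app:intersection} explicitly works under the hypothesis $d\ge2$ (overriding the standing assumption $d\ge3$ of Section~\ref{sec:numbertheory}), and the $d=2$ case of Lemma~\ref{l:intersection} --- hence of Lemmas~\ref{01-34} and \ref{l:irr} --- is invoked verbatim in Appendix~\ref{app:d=2} to prove Theorem~\ref{t:countingterms'}, which underlies the two-dimensional part of Theorem~A. In your ``pure $|z_j|^2$'' branch, which genuinely occurs (for $N=3$ there is no index $i\ne j$ with $1<i<N$ at all, so $Q^{(2)}_2=c_2|z_2|^2$ automatically), the quadratic part has rank exactly $d=2$, and over $K=\bar{\mathbb F}_p$ a rank-two form such as $x_1^2+x_2^2=(x_1+ix_2)(x_1-ix_2)$ is reducible; your criterion is then silent. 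The missing ingredient is precisely the affine linear term $z_j\cdot(\alpha_{j1}Lz_1+\alpha_{jN}Lv)$, which your passage to leading parts discards: the paper's proof restricts to a plane adapted to this vector and obtains, after completing the square, the conic $x_1^2+x_2^2=C$ with $C\ne0$, which is irreducible even though its homogeneous part is not. This is also where the assumptions $(z_1,v)\in Q^0_{1\,L}\cap A_2$ and $Lz_1,Lv\ne0$ do real work beyond guaranteeing $q^{(p)}_j\not\equiv0$: they force the linear coefficient vector in \eqref{eq:q_tilde} to be non-zero. To close the $d=2$ case you would have to supplement the rank computation by an analysis of the full (inhomogeneous) quadric in this branch, which is essentially the computation the paper carries out.
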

\begin{proof}
	Each polynomial  $q^{(p)}_{j}$ has degree one or two. If its degree is one the assertion is obvious.  Now let the degree be two. 
	Note that in view of \eqref{eq:q_tilde} 
	$q^{(p)}_{j}$  can be written as the scalar products $q^{(p)}_{j} =z_j\cdot l_j(z_2,\ldots, z_{N-1};z_1,v)$ mod 
	$p$, 
	where $l_j$ are surjective affine functions  $l_j:K^{d(N-2)}\longrightarrow K^d$. But
	such scalar product cannot vanish for $d\ge 2>1$ on a hyperplane $H \subset K^{d(N-2)} $ which  by Lemma~\ref{dec}.\,ii) 
	would be the case for a reducible quadric. Indeed,  only   two cases can occur:
	\begin{enumerate}[a)]
		\item the coefficient $\al$ of $z_j$ in $l_j$ is non-zero, or
		\item it is zero but then the coefficient $ \beta$ of some other  $z_i$  is non-zero. 
	\end{enumerate}
	In case a) take the 2-dimensional plane $P(x_1,x_2)$ in
	the whole space,  generated by  two  orthogonal  vectors from the
	$z_j$-space, where the first basis vector is parallel to $\alpha_{1j}z_1+\alpha_{Nj}v\neq 0$ (this vector is non-zero since  $(z_1,v)\in Q_{1\,L}^0$ and $Lz_1,\, Lv \ne 0$ in $K^d$, and for the case a) we have $\alpha_{1j},  \alpha_{Nj}\ne 0$). Then  the restriction of $q^{(p)}_{j}=0$ on $P$ is 
	$\al(x_1^2+x_2^2)+c_1x_1=0$ with $c_1\neq 0$, which is isomorphic
	to $x_1^2+x_2^2=C\neq 0$. 
	This plane quadric in $P(x_1,x_2)$   cannot contain $P(x_1,x_2)\cap H$ (a line or the whole  $P(x_1,x_2)$).
	Indeed, otherwise, supposing by symmetry that the quadric contains a line $x_1=ax_2+b$, we would have that the polynomial 
	$$a^2x_2^2+2abx_2+b^2+x_2^2 -C =(a^2+1)x_2^2+2abx_2+b^2 -C$$
	vanishes identically. This  implies $ab=0$, and if $a=0$ then the term \linebreak
	$(a^2+1)x_2^2=x_2^2\ne 0$, while for $b=0$ the term $b^2 -C= 
	-C\ne 0$.
	
	Similarly, in case b) we take the 4-dimensional vector subspace $P'$  generated by the two first basis vectors in the $z_j$ space and  the  two first basis 
	vectors in the $z_i$ space. The restriction of $q^{(p)}_{j}=0$ on $P'$ is then $\beta(x_1y_1+x_2y_2)+c_1x_1+c_2x_2=0$,  isomorphic to $x_1y_1+x_2y_2=C$
	which   can not contain $P'(x_1,x_2,y_1,y_2)\cap H$. Indeed, else, supposing  by symmetry that $P'(x_1,x_2,y_1,y_2)\cap H\supset \{x_1=a_1x_2+b_1y_1+b_2y_2-c\}$ 
	we get that the following quadratic function of $x_2,y_1,y_2$:
	$$(a_1x_2+b_1y_1+b_2y_2-c)y_1+x_2y_2-C=a_1x_2y_1+b_1y_1^2+b_2y_1y_2-cy_1
	+x_2y_2-C$$  vanishes identically, which is clearly wrong. 
\end{proof}
\medskip

\noindent {\it End of the proof of Lemma \ref{01-34}.} 
Since each  $q_j^{(p)}$ is a non-zero polynomial of degree one or two, then 
to prove Lemma \ref{01-34} we have to consider  three cases. In the first case  both polynomials  $q_2^{(p)}$ and $q_3^{(p)}$ are linear.
Then the codimension of the intersection $\tilde Q^{(p)}$ is two since  they are linearly independent.
In the second case both $q_2^{(p)}$ and $q_3^{(p)}$ are quadratic. Then, according to Lemma~\ref{l}, the codimension  still is two since the polynomials 
are irreducible  by Lemma~\ref{l:irr}.   Finally in the  last case, when one polynomial is linear and another one is quadratic, the assertion is clear since  then the AAS in question 
is an intersection of a quadratic irreducible  surface with a hyperplane. Thus  its codimension  is two by  Lemma \ref{dec}.\,ii).

\begin{remark} \label{r_N>4}
	The proof of Lemma \ref{01-34} follows from three lemmas. Two of them are valid for any $N>2$, but Lemma~\ref{l} holds only for $N=4$ (and
	tautologically holds  for smaller $N$). Still the bi-linear (or linear) nature of the polynomials $q^{(p)}_j$ and direct analysis of the AAS $\tilde Q^{(p)}$,
	jointly with the two lemmas, valid for any $N>2$, allow to prove by hand Lemma~\ref{01-34} for ``not too high" values of $N$, and thus, to prove 
	for those $N$'s Theorem~\ref{t:countingterms}. Unfortunately, for the moment we cannot prove the theorem for all $N>2$; cf. 
	Conjecture~\ref{con_3.6}.
\end{remark}

\section{Proof of Proposition \ref{l:N2} and  Lemma \ref{l:Z^j-growth}} \lbl{sec:Q_s-prop} 

We prove Proposition \ref{l:N2} in Sections~\ref{s:beg_pr_lN2}-\ref{s:end_pr_lN2} and  Lemma~\ref{l:Z^j-growth} in Section~\ref{s:Z^j-growth}.

\subsection{Beginning of the proof of Proposition \ref{l:N2}}
\lbl{s:beg_pr_lN2}

The proof of the proposition is somewhat cumbersome since  we have 
to consider a number of different terms and different cases. 
During the proof  we will often skip the upper index $(0)$, 
so by writing $a$ and $a_s$ we  will mean $a^{(0)}$ and  $a_s^{(0)}$. 
 We will also skip the dependence on $\tau_0$ by writing $c_s^{(i)}(\tau;\tau_0)$ and 
 $\Del a_s^{(i)}(\tau;\tau_0)$ as $c_s^{(i)}(\tau)$ and $\Del a_s^{(i)}(\tau)$. 
Besides, 
 for a complex  function $(w_{s_1,\ldots,s_k}, s_j\in \Z^d_L)$ we denote 
$$
\ssum_{s_1,\ldots,s_k\in\Z^d_L}w_{s_1,\ldots,s_k} = L^{-kd} {\sum}_{s_1,\ldots,s_k\in\Z^d_L} w_{s_1,\ldots,s_k}\, ,
$$
and we introduce the symmetrisation
$$
\cY_s^{sym}(u,v,w;t) =\frac{ L^{-d}}3 \sum_{1,2,3}   \delta'^{1 2}_{3
  s}\de(\oms) \left( 
u_{1} v_{2} \bar w_{3}+v_{1} w_{2} \bar u_{3}+w_{1}
u_{2} \bar v_{3} \right)\,.
$$

We recall that $Q_{s,L}$ is given by  formula \eqref{Q_s-def} and first
consider the term $\EE \Del a^{(2)}_s(\tau) \bar a_s (\tauz+\tau)$. 
Inserting the identity
$a^{(1)}(\tauz+l)=c^{(1)}(l) + \Del a^{(1)}(l)$
into  formula  \eqref{deltas_a} for  $\Del a^{(2)}_s $, we obtain
$$
\EE \Del a^{(2)}_s (\tau) \bar a_s(\tauz+\tau)= N_s + \wt N_s,
$$
where
\be\lbl{NN22}
N_s:=i\, \EE \Big(\bar a_s(\tauz+\tau)
\int_0^\tau e^{-\ga_s(\tau-l)} 3\cY_s^{sym}(a(\tauz+l),a(\tauz+l),\Del a^{(1)}(l))\,dl
\Big)
\ee
and
$$
\wt N_s:= i\,\EE \Big(\bar a_s(\tauz+\tau)
\int_{0}^\tau e^{-\ga_s(\tau-l)} 3\cY_s^{sym}(a(\tauz+l),a(\tauz+l),c^{(1)}(l) )\,dl
\Big).
$$
Thus, 
\be\label{Q_s-def2}
Q_{s,L}= L^2\left(\EE |\Del a_s^{(1)}(\tau)|^2 + 2 \Re N_s + 2\Re \EE\Del a^{(1)}_s (\tau)\bar c^{(1)}_s(\tau) +2\Re \wt N_s\right),\quad
s\in\R^d. 
\ee
We will analyse the four terms above term by term.

\subsection{The first term of $Q_{s,L}$ in \eqref{Q_s-def2}} \label{ss_2}
Due to \eqref{deltas_a}, we  have
\be\lbl{E Del a}
\EE  |\Del a^{(1)}_s(\tau)|^2
=\EE\int_{\tauz}^{\tauz+\tau} dl \int_{\tauz}^{\tauz+\tau} dl'\,
e^{-\ga_s(2\tauz+2\tau-l-l') } \cY_s(a(l))
\,\ov{\cY_s(a(l'))}.
\ee
Writing the functions $\cY_s$ explicitly and applying the Wick theorem, in view of \eqref{corr_a_in_time} we find
\begin{align}\non
	\EE  |\Del a^{(1)}_s(\tau)|^2=
	2 L^{-2d} \sum_{1,2}&
	\dep\delta(\oms)\int_{\tauz}^{\tauz+\tau} dl \int_{\tauz}^{\tauz+\tau} dl'\,
	e^{-\ga_s(2\tauz+2\tau-l-l')}
	\\\non
	&\EE a_1(l) \bar a_{1}(l') \,
	\EE a_2(l) \bar a_{2}(l') \,
	\EE \bar a_3(l) a_{3}(l'),
\end{align}
and note that
$$
\int_{\tauz}^{\tauz+\tau} dl \int_{\tauz}^{\tauz+\tau} dl' \,
e^{-\ga_s(2\tauz+2\tau-l-l')}\le \tau^2.
$$
On account of \eqref{corr_a_in_time}, we can bound
$$
\EE  |\Del a^{(1)}_s(\tau)|^2\le 2\tau^2  \ssum_{1,2}\dep\delta(\oms)
B_{123},
$$
where $B_{123}=B_1B_2B_3$. Since $B_{123}$  with $s_3=s-s_1-s_2$ is a
Schwartz function of 
$s,s_1,s_2$ then Theorem~\ref{t:countingterms} with $N=2$ applies and we find 
\be\label{f_1}
\EE  |\Del a^{(1)}_s(\tau)|^2 \le C^\#(s) L^{-2} \tau^2 .
\ee

\subsection{The second term of $Q_{s,L}$  in \eqref{Q_s-def2}}  \label{ss_3}
To study the term  $2\Re N_s$ we  use the same strategy as above. Namely, 
expressing  in \eqref{NN22}  the function $3\cY_s^{sym}$ via   $\cY_s$, we write $N_s$ as 
$
N_s= N^1_s + 2N_s^2, \; s\in\R^d, 
$
where
\begin{align*}
  N^1_s&=i\,\EE \Big(\bar a_s(\tauz+\tau) \int_0^\tau e^{-\ga_s(\tau-l)} 
  \cY_s(a(\tauz+l),a(\tauz+l),\Del a^{(1)}(l))\, dl\Big),
  \\
  N^2_s&=i\,\EE \Big(\bar a_s(\tauz+\tau) \int_0^\tau e^{-\ga_s(\tau-l)} 
  \cY_s(\Del a^{(1)}(l),a(\tauz+l),a(\tauz+l))\, dl
  \Big).
\end{align*}

{\it Term $N_s^1$.} Writing explicitly the function $\cY_s$ and then $\Del \bar a_3^{(1)}$ we get
\begin{align}\non
  N^1_s&=
  i\,L^{-d}\sum_{1,2}\dep\delta(\oms) \int_0^\tau dl\, e^{-\ga_s(\tau-l)} \times
  \\\non
 &\qquad\quad\qquad\qquad\qquad \EE \big(a_1(\tauz+l)a_2(\tauz+l) \Del\bar a_3^{(1)}(l)\bar a_s(\tauz+\tau)\big)
  \\\lbl{N_s-odin}
  &=L^{-2d}\sum_{1,2}\sum_{1',2'}\dep\de'^{1'2'}_{3'3}\delta(\oms)
  \delta(\om^{1'2'}_{3'3})
  \int_0^\tau dl\,\int_{0}^l dl'\, 
  e^{-\ga_s(\tau-l)} e^{-\ga_3(l-l')} 
  \\\non
  &{}\qu  \times \EE \big(a_1(\tauz+l)a_2(\tauz+l)
  \bar a_{1'}(\tauz+l')\bar a_{2'}(\tauz+l') a_{3'}(\tauz+l')
  \bar a_s(\tauz+\tau)\big).
\end{align}
By the Wick theorem, we need to take the summation only over
$s_{1'},s_{2'},s_{3'}$ satisfying 
$s_{1'}=s_1$, $s_{2'}=s_2$, $s_{3'}=s$ or $s_{1'}=s_2$, $s_{2'}=s_1$, $s_{3'}=s$.
Since in both cases we get 
$\de'^{1'2'}_{3'3}=\dep$
and 
$\om^{1'2'}_{3'3}=\oms$,
we find
\begin{align}\non
  N^1_s=
  2& \ssum_{1,2}\dep\delta(\oms)
  \int_0^\tau dl\,\int_{0}^l dl'\, 
  e^{-\ga_s(\tau-l)-\ga_3(l-l')}  
  \\\non
  &\times
  \EE a_1(\tauz+l)\bar a_{1}(\tauz+l')\,
  \EE a_2(\tauz+l)\bar a_{2}(\tauz+l')\,
  \EE a_s(\tauz+l')\bar a_{s}(\tauz+\tau).
\end{align}
Arguing as in Section \ref{ss_2}  we find
\be\lbl{N221}
| N_s^1| \le C^\#(s) L^{-2} \tau^2.
\ee

{\it Term $N_s^2$.} By  literally repeating the argument  we have applied to  $N_s^1$ we find that 
\begin{align}\non
	N_s^2&=
	i\,L^{-d}\sum_{1,2}\dep\delta(\oms) \int_0^\tau dl\, e^{-\ga_s(\tau-l)} \times
	\\\non
	&\qquad\quad\qquad\qquad\qquad
	\EE \big(\Del a_1^{(1)}(l)a_2(\tauz+l)\bar a_3(\tauz+l)\bar a_s(\tauz+\tau)\big)
	\\\non
	&=-L^{-2d}\sum_{1,2}\sum_{1',2'}\dep\de'^{1'2'}_{3'1}\delta(\oms)
        \delta(\om^{1'2'}_{3'1}) 
	\int_0^\tau dl\,\int_0^l dl'\, 
	e^{-\ga_s(\tau-l)} e^{-\ga_1(l-l')} 
	\\\non
	& \qu\times\EE \big(a_{1'}(\tauz+l')a_{2'}(\tauz+l')
	\bar a_{3'}(\tauz+l')a_{2}(\tauz+l) \bar a_{3}(\tauz+l)
	\bar a_s(\tauz+\tau)\big).
\end{align}
By the Wick theorem we should  take  summation either 
under the condition
$s_{1'}=s_3$, $s_{2'}=s$, $s_{3'}=s_2$ or $s_{1'}=s$, $s_{2'}=s_3$, $s_{3'}=s_2$.
Since in both cases 
$\de'^{1'2'}_{3'1}=\dep$
and 
$\om^{1'2'}_{3'1}=-\oms$,
then 
\begin{align}\lbl{same_expr}
	N_s^2=&
	-2\ssum_{1,2}\dep\delta(\oms)	\int_0^\tau dl\,\int_0^l dl'\, 
        e^{-\ga_s(\tau-l)} e^{-\ga_1(l-l')} \\
        \non
&	\EE a_2(\tauz+l)\bar a_{2}(\tauz+l')\,
	\EE a_3(\tauz+l')\bar a_{3}(\tauz+l)\,
	\EE a_s(\tauz+l')\bar a_{s}(\tauz+\tau)\,.
\end{align}
Again we get
\be\lbl{N222}
|  N_s^2| 
\leq  C^{\#}(s)L^{-2}\tau^2.
\ee

\subsection{The third term of $Q_{s,L}$  in \eqref{Q_s-def2}}\label{ss_1}
We have
$$
\EE \Del a^{(1)}_s \bar c^{(1)}_s(\tau)
=\EE \int_{\tauz}^{\tauz+\tau} e^{-\ga_s(\tauz+\tau-l)} \cY_s(a(l))\,dl 
\int_{0}^{\tauz} e^{-\ga_s(\tauz+\tau-l')} \ov{\cY_s(a(l'))}\,dl'.
$$
This expression coincides with \eqref{E Del a} in which the integral
$\int_{\tauz}^{\tauz+\tau} dl'$ is replaced by  $\int_0^{\tauz} dl'$.
Then,
\begin{align}\non
  \EE  \Del a^{(1)}_s(\tau)\bar c^{(1)}_s(\tau)=
  2\ssum_{1,2}&
  \dep\delta(\oms)\int_{\tauz}^{\tauz+\tau} dl \int_{0}^{\tauz} dl'\,
  e^{-\ga_s(2\tauz+2\tau-l-l')}
  \\\non
  &\EE a_1(l) \bar a_{1}(l') \,
  \EE a_2(l) \bar a_{2}(l') \,
  \EE \bar a_3(l) a_{3}(l'),
\end{align}
Expressing the correlations $\EE a_j(l)\bar a_j(l')$ through
\eqref{corr_a_in_time}, we get
\begin{align}\non
  \EE  \Del a^{(1)}_s(\tau)\bar c^{(1)}_s(\tau)=&
  2\ssum_{1,2}
  \dep\delta(\oms) B_{123}\int_0^\tau dl\,
  e^{-2\ga_s(\tau-l)-\ga_{123s}l}
  \\\non
  & \int_0^{\tauz} dl'\,e^{-\ga_{123s}\tauz} e^{\ga_s l'}\prod_{j=1,2,3}
  \left(e^{\ga_jl'} -e^{-\ga_jl'}\right)\,.
\end{align}
For the integral in the first line we have
\begin{equation}\lbl{N^2,1,tfactor}
T_{s} :=  \int_0^\tau dl\,
  e^{-2\ga_s(\tau-l)-\ga_{123s}l} =\left\{ \begin{array}{cc} \tau e^{-2\ga_s \tau} & \mbox{if }
  2\ga_s=\ga_{123s}\\
  \frac{e^{-2\ga_s\tau}-
    e^{-\ga_{123s}\tau}}{\ga_{123s} - 2 \ga_s}&
  \mbox{elsewhere}\end{array}\right. \,.
\end{equation}
 For the integral in the
second line, let us denote
\begin{equation}\lbl{N^2,1,Tfactor}
  \begin{split}
\cT^j:=& \int_0^{\tauz} dl\,e^{-\ga_{123s}\tauz} e^{\ga_j l}\prod_{k\neq j}
\left(e^{\ga_kl} -e^{-\ga_kl}\right)\,,
  \end{split}
\end{equation}
where $j,k \in \{1,2,3,s\}$.
Then, 
\be\lbl{Tfactor-estimate}
0\leq\cT^j\leq 1/\ga_{123s}.
\ee
Due to \eqref{N^2,1,tfactor} and \eqref{N^2,1,Tfactor} we get
\be\lbl{Ea^1c^1-fin}
\EE  \Del a^{(1)}_s(\tau)\bar c^{(1)}_s(\tau) =  2\ssum_{1,2}
  \dep\delta(\oms)  B_{123}T_{s} \cT^s.
\ee

\subsection{The fourth term of $Q_{s,L}$  in \eqref{Q_s-def2}}\label{ss_4}
To study the term  $2\Re \wt N_s$, as in Section~\ref{ss_3},  we write
$\wt N_s$ as 
$
\wt N_s= \wt N^1_s + 2\wt N_s^2, \; s\in\R^d, 
$
where
\begin{align*}
	\wt N^1_s&=i\,\EE \Big(\bar a_s(\tauz+\tau) \int_0^\tau e^{-\ga_s(\tau-l)} 
	\cY_s(a(\tauz+l),a(\tauz+l), c^{(1)}(l))\, dl\Big),
	\\
	\wt N^2_s&=i\,\EE \Big(\bar a_s(\tauz+\tau) \int_0^\tau e^{-\ga_s(\tau-l)} 
	\cY_s( c^{(1)}(l),a(\tauz+l),a(\tauz+l))\, dl
	\Big).
\end{align*}

{\it Term $\wt N_s^1$.} Writing explicitly the function $\cY_s$ and then $\bar c^{(1)}$ we get
\begin{align*}\non
  \wt N^1_s&=
  L^{-2d}\sum_{1,2}\sum_{1',2'}\dep\de'^{1'2'}_{3'3}\delta(\oms)
  \delta(\om^{1'2'}_{3'3})
  \int_0^\tau dl\,\int_{-\tauz}^{0} dl'\, 
  e^{-\ga_s(\tau-l)} e^{-\ga_3(l-l')} 
  \\\non
  &{}\qu  \times \EE \big(a_1(\tauz+l)a_2(\tauz+l)
  \bar a_{1'}(\tauz+l')\bar a_{2'}(\tauz+l') a_{3'}(\tauz+l')
  \bar a_s(\tauz+\tau)\big).
\end{align*}
Again, this is the same expression as \eqref{N_s-odin}, with the
integration over $dl'$ ranging from $-\tauz$ to 0 instead of from $0$ to
$l$. Thus, by the Wick theorem, we obtain
\begin{align}\non
  \wt N^1_s&=
	2 \ssum_{1,2}\dep\delta(\oms)
	\int_0^\tau dl\,\int_{-\tauz}^0 dl'\, 
	e^{-\ga_s(\tau-l)-\ga_3(l-l')}  
	\\\non
	&\times
	\EE a_1(\tauz+l)\bar a_{1}(\tauz+l')\,
	\EE a_2(\tauz+l)\bar a_{2}(\tauz+l')\,
	\EE a_s(\tauz+l')\bar a_{s}(\tauz+\tau).
\end{align}
Following the line of Section \ref{ss_1}  we express the correlations 
through \eqref{corr_a_in_time} and get 
\begin{equation}
  \begin{split} \lbl{tN^2,1_wick,2}
  \wt N^1_s=&
  2\ssum_{1,2}
  \dep\delta(\oms) B_{12s}\int_0^\tau dl\,
  e^{-2\ga_s(\tau-l)-\ga_{123s}l}
  \\
  & \int_0^{\tauz} dl'\,e^{-\ga_{123s}\tauz} e^{\ga_3 l'} \left(e^{\ga_sl'}
  -e^{-\ga_sl'}\right)\prod_{j=1,2} 
  \left(e^{\ga_jl'} -e^{-\ga_jl'}\right)
  \\
   = & 2\ssum_{1,2} \dep\delta(\oms) B_{12s} T_{s}\cT^3\,.
  \end{split}
\end{equation}

{\it Term $\wt N_s^2$.} Literally repeating the argument which we have
applied to  $\wt N_s^1$,  we find that the term $\wt N_s^2$ is given by the same expression as \eqref{same_expr}
with the integral $\int_0^l$ replaced by $\int_{-\tau_0}^0$:
\begin{align}\non
\wt	N_s^2=&
	-2\ssum_{1,2}\dep\delta(\oms)	\int_0^\tau dl\,\int_{-\tauz}^0 dl'\, 
        e^{-\ga_s(\tau-l)} e^{-\ga_1(l-l')} \\
        & \non
	\EE a_2(\tauz+l)\bar a_{2}(\tauz+l')\,
	\EE a_3(\tauz+l')\bar a_{3}(\tauz+l)\,
	\EE a_s(\tauz+l')\bar a_{s}(\tauz+\tau).
\end{align}
Again we get
\be\lbl{wt N_s^2}
 \wt N_s^2 =    - 2\ssum_{1,2} \dep\delta(\oms) B_{23s} T_{s}\cT^1\,.
\ee

\subsection{End of the proof}
\lbl{s:end_pr_lN2}
Inserting formulas \eqref{Ea^1c^1-fin}, \eqref{tN^2,1_wick,2}
and \eqref{wt N_s^2}, as well as \eqref{f_1},
\eqref{N221}, \eqref{N222} in \eqref{Q_s-def2}, we get
\begin{equation}\lbl{Q_s-appr}
  \begin{split}
\left|Q_{s,L}-4L^2T_{s}\ssum_{1,2} \dep\delta(\oms) \left(B_{123}\cT^s +
B_{12s}\cT^3 - 2B_{23s}\cT^1\right)\right| \le C^\#(s)\tau^2\,.
  \end{split}
\end{equation}
 Note that the terms $\cZ^j$ defined in \eqref{Z-edi} can be written as
\be\lbl{fromTtoZ}
\cZ^j=\frac{\cT^j}{\prod_{k\neq j}(1-e^{-2\ga_k\tauz})}\,.
\ee
The relations
\eqref{corr_a_in_time}-\eqref{corr()} imply that   for any permutation $(k_1,k_2,k_3,k_4)$ of $(1,2,3,s)$ we have  $B_{k_1k_2k_3}= 
n^{(0)}_{k_1}n^{(0)}_{k_2}n^{(0)}_{k_3}/\prod_{m=k_1,k_2,k_3}(1-e^{-2\ga_m\tauz})$, 
where $n^{(0)}_{k_i} = n_{k_i,L}^{(0)}(\tauz)$. Together with \eqref{fromTtoZ} this implies
\be\lbl{B-T-Z}
B_{k_1k_2k_3}\cT^{k_4}=  \cZ^{k_4}n^{(0)}_{k_1}n^{(0)}_{k_2}n^{(0)}_{k_3} \,.
\ee
 By symmetry, the term $2B_{23s}\cT^1$ in \eqref{Q_s-appr} can be replaced by $B_{23s}\cT^1 + B_{13s}\cT^2$. Then,
  inserting \eqref{B-T-Z} in \eqref{Q_s-appr} we get
\begin{equation*}
  \begin{split}
\left|Q_{s,L}-\cX_s\right|\le &\left|4L^2(T_s-\tau)\ssum_{1,2}\dep\delta(\oms)\left( \cT^sB_{123}\
+ \cT^3B_{12s}- 2\cT^sB_{23s}\right)\right|\\
&+ C^\#(s)\tau^2\,,
  \end{split}
\end{equation*}
with $\cX_s$ defined in \eqref{ZZ_s}.
Finally, we point out that $|(T_s-\tau)\cT^j|\leq 3\tau^2$, due to
\eqref{Tfactor-estimate} and since  $| T_{s} -\tau|\le
3\tau^2\ga_{123s}$. So, the bound
$$
    \left|L^2(T_s-\tau)\ssum_{1,2}\dep\delta(\oms)\left( \cT^sB_{123}\
    + \cT^3B_{12s}- 2\cT^sB_{23s}\right)\right|\le C^\#(s)\tau^2\,,
$$
is a consequence of Theorem~\ref{t:countingterms}.
This concludes the proof of Proposition~\ref{l:N2}.

\subsection{ Proof of Lemma \ref{l:Z^j-growth}}
\lbl{s:Z^j-growth}
Note that
$
\p_{s_j}f(\ga_j) = f'(\ga_j) \p_{s_j}\ga_j,
$
where $\p_{s_j}\ga_j$   (as well as higher order derivatives of $\ga_j$) 
have at most polynomial growth at infinity. Then, using
the definition \eqref{Z-edi} of $\cZ^j$ we find
\begin{equation}\lbl{Z-der}
	\begin{split}
		\left| \p^{\mu}_{\vs} \cZ^j(\tauz,\vs)\right| = \sum_{n_1+n_2+n_3+n_s=  1}^{|\mu|}&P(\vs; n_1,\ldots,n_s) \int_0^{\tauz}dl\,(\tauz-l)^{n_j}\e^{-\ga_j(\tauz-l)}\\
		&\prod_{m\neq j} \frac{d^{n_m}}{d\ga_m^{n_m}}\left(\frac{\sinh(\ga_m
			l)}{\sinh(\ga_m \tauz)}\right)\,,
	\end{split}
\end{equation}
where $P(\vs;a_1,\ldots,a_n)$ denotes a function of $\vs$, dependent on parameters $(a_1,\ldots,a_n)$, having at most a 
polynomial growth at infinity.
Using the relation
$$
\frac{\sinh(\ga
	l)}{\sinh(\ga \tauz)} = \frac{e^{-\ga(\tauz-l)} -e^{-\ga(l+\tauz)}}{1-e^{-2\ga \tauz}} 
$$
we find by induction 
\begin{equation*} 
	\frac{d^{n}}{d\ga^{n}}\left(\frac{\sinh(\ga
		l)}{\sinh(\ga \tauz)}\right) = \sum_{k+m+p=n} c_{k,m,p} I_{k,m,p}(l,\tau_0,\ga),
\end{equation*}
where $c_{k,m,p}$ are constants,
\begin{equation}\lbl{I_kmn}
	I_{k,m,p} = \left( (\tauz-l)^ke^{-\ga(\tauz-l)} - (l+\tauz)^ke^{-\ga(l+\tauz)} \right)
	\frac{\tauz^{m+p}e^{-2\ga m \tauz}}{(1-e^{-2\ga \tauz})^{m+1}}\ ,
\end{equation}
and $p\neq 0$ only if $m\neq 0$.
For  $\tau_0\geq \ga^{-1}$ the terms $I_{k,m,p} $ are bounded in absolute values by absolute constants $C_{k,m,p}$, where we recall that $0\leq l \leq \tau_0$ and $\ga \geq 1$. 
Let now $\tau_0\leq \ga^{-1}$. 
In this case, since $k+m+p=n$,
$$
|I_{k,m,p}| \leq 2\, \frac{(l+\tau_0)^k\tau_0^{m+p}}{(1-e^{-2\ga \tauz})^{m+1}} \leq 2^{k+1} \,\frac{\tau_0^{n}}{(1-e^{-2\ga \tauz})^{m+1}} .
$$
So, in the case $m\leq n -1 $ we have $|I_{k,m,p}|\leq C_{k,m,p}$ uniformly in  $\tau_0\leq \ga^{-1}$.
If $m=n$ (so $k=p=0$) we use another estimate, following from \eqref{I_kmn}:
$$
|I_{k,m,p}| \leq C\, \frac{e^{-\ga(\tauz-l)} - e^{-\ga(l+\tauz)}}{(1-e^{-2\ga \tauz})^{m+1}}\tau_0^m \! =\! C\tau_0^m\,e^{-\ga(\tauz-l)} \frac{1 - e^{-2\ga l}}{(1-e^{-2\ga \tauz})^{m+1}}\, \leq C_{k,m,p},
$$
uniformly in $\tau_0\leq \ga^{-1}$.

We have seen that the product in
 \eqref{Z-der} is bounded uniformly in $\vs,\, l$ and $\tau_0$, so the integral over $l$ is also bounded uniformly in $\vs$ and $\tau_0$.

\section{Proof of Proposition~\ref{p:approx}}
\lbl{app:approx}

The proof uses the theory of Feynman diagrams, presented  in Section~\ref{sec:diagram}. For $N=0$ the assertion is trivial. For $N\geq 1$ in Proposition 8.7 of \cite{DK2} it is proven that  $\EE\big(\mathfrak{a}_s^{(m)}(\tau_1)\bar {\mathfrak{a}}_s^{(n)}(\tau_2)- a_s^{(m)}(\tau_1)\bar a_s^{(n)}(\tau_2)\big)$ equals to
\be\lbl{a---A_bound}
\sum_{\gF\in\gF^+_{m,n}\setminus \gF_{m,n}}c_\gF\,\mathcal{J}_s(\gF)
+\sum_{\gF\in\gF_{m,n}}c_\gF\,\mathcal{J}^2_s(\gF),
\ee 
where $\gF^+_{m,n}$ is a certain (finite) set of {\it extended} Feynman diagrams, 
\footnote{These diagrams are defined similarly to the Feynman diagrams from Section~\ref{s:FD_def}, but now we allow to couple leaves not only from different blocks but also from the same block.} 
$c_\gF$ is a complex number of unit norm
and $\mathcal{J}_s(\gF)$, $\mathcal{J}_s^2(\gF)$ are sums, 
 similar to \eqref{J(F)-z}. In Section 8.6.3 of \cite{DK2} are established the following bounds for these sums: 
 \be\lbl{app:J_2-est}
|\mathcal{J}^2_s(\gF)|\leq C^\#(s)L^{-Nd}
\sum_{\substack{z\in\cZ^+(\gF):\\  z_j=0 \mbox{ {\footnotesize for some} }j, \\
		\om^\gF_k(z)=0 \,\forall 1\leq k\leq N}}C^\#(z),
\ee
where
$$
\cZ^+(\gF)=\big\{z\in(\Z_L^{d})^{N}:\ z_{k}\ne 0 \Leftrightarrow\sum_{i=1}^N\al_{ki}^{\gF} z_i\ne 0 \quad \forall 1\leq k\leq N\big\}
$$
 while the quadratic forms $\om_k^\gF$ and the skew-symmertic matrix $\al^\gF$ are defined in Section~\ref{s:transform}.
 Note that possibly the diagram $\gF$ does not belong to the set $\gF^{\,true}_{m,n}$, so that the 
 matrix $\al^\gF$ may have zero columns and lines.
On the other hand,
\be\lbl{app:J-est}
|\mathcal{J}_s(\gF)|\leq C^\#(s)L^{-Nd}
\sum_{\substack{z\in\wt\cZ^+(\gF):\\ \wt\om^\gF_k(z)=0 \,\forall 1\leq k\leq \wt N}} C^\#(z).
\ee
Here $\wt N=\wt N(\gF)<N$,  quadratic forms $\wt \om_k^\gF(z)$ are defined  by relations \eqref{om_j^F(z)}, 
where $N$ is replaced by $\wt N$ and the matrix 
$(\al_{ij}^\gF)$~-- by a certain $\wt N\times \wt N$-matrix $(\wt\al_{ji}^\gF)$, also satisfying $\wt\al_{ji}^\gF=-\wt\al_{ij}^\gF\in\{0,\pm 1\}$ for all $i,j$. 
Accordingly the 
 set $\wt\cZ^+(\gF)\subset(\Z^d_L)^{\wt N}$ is defined as  $\cZ^+(\gF)$ above, 
 but with  $N$ and $\al_{ij}^\gF$  replaced by $\wt N$ and $\wt\al_{ij}^\gF$.

We first show that 
 the term $\mathcal{J}^2_s(\gF)$ is bounded by the r.h.s. of \eqref{eq:comp_moment}. To this end 
we write $\cZ^+(\gF)=\cup_{\cK}\cZ_\cK$, where the union is taken over all subsets $\cK\subset\{1,\ldots,N\}$ and
$$
\cZ_\cK(\gF)=\{z:\, z_k=\sum_{i=1}^N\al_{ki}^\gF z_i=0\; \forall k\in\cK \;\;\text{and}\;\;
z_k\ne 0,\; \sum_{i=1}^N\al_{ki}^\gF z_i\ne 0 \;\forall k\notin\cK\}.
$$
Then the r.h.s. of \eqref{app:J_2-est} takes the form
\be\lbl{app:J_2-est1}
C^\#(s)L^{-Nd} \sum_{\cK\ne\emptyset}\sum_{\substack{z\in\cZ_\cK(\gF):\\
		\om^\gF_k(z)=0 \,\forall 1\leq k\leq N}}C^\#(z).
\ee
Note that on the set $\cZ_\cK(\gF)$ we have $\om_k(z)=0$ for all $k\in\cK$ and $\om_k(z)=2z_k\cdot\sum_{i\notin\cK}\al_{ki}^\gF z_i$ for $k\notin\cK$.
Thus, the sum over $z$ in \eqref{app:J_2-est1} takes the form of the sum in \eqref{S_L}, where $z=(z_j)_{j\notin\cK}$ and 
 $N$ is replaced by
$$
N- \varkappa,  \quad \varkappa = \#\cK.
$$ 
 We recall that $N\le4$ and $\cK\not\equal\emptyset$, so that $N-\varkappa$ takes values $0,\,1,\, 2$ or $3$. 
	For the sets $\cK$ satisfying $N-\varkappa = 0$ we have $\cZ_\cK(\gF) = \{0\}$, so the sum \eqref{app:J-est} is bounded by $C^\#(s)L^{-Nd}$.
	Since the matrix $\al^\gF$ is skew-symmetric, then 
	 in the case $N-{\varkappa}=1$ we have $\cZ_\cK(\gF)=\emptyset$, so the sum   \eqref{app:J-est} vanishes.
   When $N-{\varkappa} = 2$ or $3$ we apply
 Theorem~\ref{t:countingterms} and see that the sum over $z$ in
 \eqref{app:J_2-est1}  is bounded by $C L^{-(N-{\varkappa})(1-d)}$. So 
\be\non
\begin{split}
|\mathcal{J}^2_s(\gF)|\leq C^\#(s) L^{-Nd}\sum_{\cK\ne\emptyset}L^{-(N-{\varkappa})(1-d)}
&=C^\#(s)\sum_{\cK\ne\emptyset} L^{-N+{\varkappa}(1-d)}\\
&\leq C_1^\#(s)L^{-N+1-d}.
\end{split}
\ee

Same argument implies that the r.h.s. of \eqref{app:J-est}  also is bounded by the quantity 
$C^\#(s)L^{-N+1-d}$ 
 (note that  decomposing the r.h.s. of \eqref{app:J-est} as in  
   \eqref{app:J_2-est1} we get a new term with 
 $\cK \equal\emptyset$, but for it $N-\varkappa=N\le4$ and  Theorem~\ref{t:countingterms} still applies).

\section{Case $d=2$}
\lbl{app:d=2}

 A difference between the cases $d\geq 3$ and $d=2$ comes from Theorem~\ref{t:numbertheory} since in the asymptotic,
  given by the latter,  an additional log-factor appears when $d=2$. To handle it we 
  redefine the sum in \eqref{S_L}, defining $S_{L,N}(\Phi)$, 
  by multiplying it by $(\ln L)^{-N/2}$. So when  $d=2$ \,  $S_{L,N}$ takes the form
\be\lbl{S_L'}
S_{L,N}(\Phi) := \frac{L^{N(1-d)}}{(\ln L)^{N/2}} \sum_{z\in\cZ: \  \omega_j (z) =0 \, \forall j} \Phi(z). 
\ee 
Accordingly the $(d=2)$-analogy of  \eqref{S_L,2 approx} reads 
\be\lbl{z12=0}
\Big| S_{L,2}(\Phi) - \frac{L^{2(1-d)}}{\ln L}\!\! \sum_{z\in\Z^{2d}_L: \  z_1\cdot z_2 =0} \Phi(z) \Big| \leq \frac{CL^{2-d}}{\ln L}\|\Phi\|_{0,d+1}=\frac{C}{\ln L}\|\Phi\|_{0,3}.
\ee 
This approximation, jointly with  a modification of the Heath-Brown result from \cite{HB}, given in 
 Theorem 1.4 of \cite{number_theory}, implies the following version of Theorem~\ref{t:numbertheory} for 
$d=2$: 
\begin{theorem}\lbl{t:numbertheory'}
	Let $d=2$. Then there exist constants $N_1,N_2>4$ such that if 
	$\|\Phi\|_{N_1,N_2}< \infty$,
	\be\label{H-BB}
	\left|S_{L,2}(\Phi) -  C_2 \int_{\Sigma_0}
	{\Phi(z)} \, 
	\mu^{\Sigma_0} (
	dz_1dz_2) 
	\right|\le K_2\frac{\|\Phi\|_{N_1,N_2}}{\ln L} \,,
	\ee
	where $C_2>0$ is a number theoretical constant and $K_2>0$. 
\end{theorem}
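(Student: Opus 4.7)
The plan is to mimic, almost verbatim, the two-step derivation of Theorem~\ref{t:numbertheory} given in the body of the paper, with only the underlying circle-method estimate swapped out. First I would separate from $S_{L,2}(\Phi)$ the contribution of those $z=(z_1,z_2)$ with $z_1=0$ or $z_2=0$. Each such sub-sum has at most $L^{d}$ terms of absolute value bounded by $\|\Phi\|_{0,d+1}\langle z\rangle^{-(d+1)}$, so summation in the non-zero component yields a contribution of size $C\,L^{2-d}(\ln L)^{-1}\|\Phi\|_{0,3}=C(\ln L)^{-1}\|\Phi\|_{0,3}$, already of the desired order. This is exactly the content of \eqref{z12=0} with $d=2$. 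After this step the problem is reduced to showing
\begin{equation*}
\frac{L^{-2}}{\ln L}\sum_{z\in\Z_L^{4}:\,z_1\cdot z_2=0}\!\!\Phi(z)
\;=\;C_2\int_{\Sigma_0}\Phi(z)\,\mu^{\Sigma_0}(dz_1dz_2)
\;+\;O\!\Bigl(\frac{\|\Phi\|_{N_1,N_2}}{\ln L}\Bigr).
\end{equation*}

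Second, I would feed this unrestricted sum directly into Theorem~1.4 of \cite{number_theory}, which is the $d=2$ counterpart, adapted for our purposes, of the Heath-Brown refinement used to prove Theorem~\ref{t:numbertheory}. The geometric reason a new input is needed is that for $2d=4$ the indefinite quadratic form $z_1\cdot z_2$ sits in the borderline dimension where the associated Epstein-type Dirichlet series has a simple pole rather than being analytic, and this pole produces the expected $\ln L$ factor in the point count; for $2d\ge 6$ no such factor appears and Theorem~1.3 of \cite{number_theory} suffices. After rescaling $z\mapsto Lz$ to pass from the lattice $\Z_L^{4}$ to $\Z^{4}$, that theorem provides an asymptotic of the form $C_2 L^2\ln L\cdot\int_{\Sigma_0}\Phi\,d\mu^{\Sigma_0}$ for the unrestricted sum, with $L^2$-sized error controlled by a seminorm $\|\Phi\|_{N_1,N_2}$ for some $N_1,N_2>4$ depending only on the dimension. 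Division by $L^2\ln L$ and combination with the first step yields \eqref{H-BB}.

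The main obstacle, already resolved in \cite{number_theory} and hence used as a black box here, is the quality of the error in Theorem~1.4: the remainder must beat the leading $L^2\ln L$ by a factor of $\ln L$, which requires quantitative control of the residue at the pole of the relevant singular series, together with uniform estimates for the singular integral over $\Sigma_0$ near the apex of the cone where $\mu^{\Sigma_0}$ blows up. The constant $C_2$ emerges as the residue of a quotient of Riemann zeta values, in direct analogy with \eqref{const_C_d}, although its explicit form is not needed for \eqref{H-BB}. Once Theorem~\ref{t:numbertheory'} is in place, the rest of the paper carries over to $d=2$ essentially unchanged, with every occurrence of $L^{-d+5/2}$ in error bounds replaced by $(\ln L)^{-1}$; this is what accounts for the modified lower bound $L\ge e^{\eps^{-1}}$ in the main result and for the additional correction term $f(\tau,L)/\ln L$ in Remark~\ref{rem:d=2}.
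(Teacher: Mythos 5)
Your proposal is correct and follows essentially the same route as the paper: the paper likewise first strips off the $z_1=0$ or $z_2=0$ contribution via the $d=2$ analogue of \eqref{S_L,2 approx} (namely \eqref{z12=0}), and then invokes Theorem~1.4 of \cite{number_theory} as a black box for the unrestricted sum. Your additional commentary on the origin of the $\ln L$ factor and of $C_2$ is consistent with, though not required by, the paper's argument.
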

Note that  estimate \eqref{int_est} stays true when $d=2$.
\begin{theorem}\lbl{t:countingterms'}
	In the case $d=2$ assertion of Theorem~\ref{t:countingterms} remains true, if the sum $S_{L,N}$ is defined as in \eqref{S_L'} and $N_2$ is the constant from Theorem~\ref{t:numbertheory'}.
\end{theorem}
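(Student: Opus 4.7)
The plan is to mimic the proof of Theorem~\ref{t:countingterms} verbatim, carefully tracking the logarithmic factors that now enter through Theorem~\ref{t:numbertheory'}. The two structural ingredients of the proof in Section~\ref{sec:intersection} are: (i) an absolute-value estimate on finite-support sums (Proposition~\ref{p:finite_support}), whose derivation splits the $N$-fold sum into a $2d$-dimensional sum on the single quadric $\{z_1\cdot v=0\}$ handled by Theorem~\ref{t:numbertheory} and a sum on the remaining $(N-2)$-dimensional algebraic set handled by Lemma~\ref{l:intersection}; and (ii) a dyadic decomposition of $\Phi$ that converts the pointwise bound into a weighted $L^\infty$ bound. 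Only step (i) needs adjustment.

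The first step is to re-prove a $d=2$ analog of Proposition~\ref{p:finite_support}. I would apply the same reduction leading to \eqref{eq:sum_intersec}. The inner factor $|\tilde Q_L(z_1,v)\cap B_R^{(N-2)d}|$ is bounded by $2^{(N-2)d}(NRL)^{(N-2)(d-1)}$ since Lemma~\ref{l:intersection} and all the Bezout/finite-field material in Appendix~\ref{app:intersection} are stated and proved under the hypothesis $d\ge 2$, so nothing there changes. The outer factor $\sum_{(z_1,v)\in Q_{1\,L}^0\cap B^{2d}_{(N-1)R}} 1$, on the other hand, must be re-estimated via the new definition \eqref{S_L'}: applying Theorem~\ref{t:numbertheory'} to a smooth bump $w_R$ with $\|w_R\|_{N_1,N_2}\le C R^{N_2}$ and using that the prefactor in \eqref{S_L'} for $N=2$ equals $L^{2(1-d)}/\ln L$, one gets the $d=2$ replacement of \eqref{est box},
\[
\sum_{(z_1,v)\in Q_{1\,L}^0\cap B^{2d}_{(N-1)R}}1 \;\le\; C L^{2(d-1)} R^{N_2}\ln L,
\]
i.e.\ the same polynomial bound in $R$ and $L$ as for $d\ge 3$ but with one extra $\ln L$. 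Handling the reducible case of $\alpha$ exactly as before, where the matrix decomposes into $m\le\lfloor N/2\rfloor$ irreducible blocks, yields
\[
\Bigl|\sum_{z\in Q\cap \cZ} w(z)\Bigr| \;\le\; C(N,d)\,R^{\lfloor N/2\rfloor N_2 + (N-2)(d-1)}\, L^{N(d-1)}\,(\ln L)^{\lfloor N/2\rfloor}\,|w|_{L_\infty},
\]
since each of the $m$ blocks contributes exactly one factor of $\ln L$.

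The second step is the dyadic decomposition $\Phi=\sum_{k\ge 0} f_k$ with $\supp f_k\subset B_{2^k}$ and $\|f_k\|_\infty\le C 2^{-k\bar N}\|\Phi\|_{0,\bar N}$, as in the end of Section~\ref{sec:intersection}. Dividing by the new normalization $L^{N(d-1)}(\ln L)^{N/2}$ built into \eqref{S_L'}, we obtain
\[
|S_{L,N}(\Phi)|\;\le\; C(N,d)\,\|\Phi\|_{0,\bar N}\, (\ln L)^{\lfloor N/2\rfloor - N/2} \sum_{k\ge 0} 2^{k(\lfloor N/2\rfloor N_2 + (N-2)(d-1)-\bar N)},
\]
which converges for $\bar N>\lfloor N/2\rfloor N_2 + (N-2)(d-1)$. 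The key observation is that $\lfloor N/2\rfloor -N/2\le 0$ for every $N$ (it equals $0$ for $N=2,4$ and $-1/2$ for $N=3$), so the residual logarithmic factor is $\le 1$ uniformly in $L\ge 2$, giving the desired $L$-independent bound $|S_{L,N}(\Phi)|\le C_{d,N}\|\Phi\|_{0,\bar N}$.

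The only subtle point, which is essentially book-keeping rather than a genuine obstacle, is to see that the $(\ln L)^{-N/2}$ normalization chosen in \eqref{S_L'} is precisely the one that compensates the worst-case accumulation of $\ln L$ factors, namely the case of a fully reducible $\alpha$ with $m=\lfloor N/2\rfloor$ irreducible $2\times 2$ blocks, each of which invokes Theorem~\ref{t:numbertheory'} once. Because all the arithmetic/algebro-geometric content of Appendix~\ref{app:intersection} is already valid for $d=2$, no new geometric input is required, and the argument reduces to the verification carried out above.
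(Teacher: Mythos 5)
Your proposal is correct and follows essentially the same route as the paper: the only change relative to the $d\ge3$ argument is that \eqref{est box} is re-derived from Theorem~\ref{t:numbertheory'} (yielding one extra $\ln L$ per irreducible block of $\alpha$, hence $(\ln L)^{\lfloor N/2\rfloor}$ in total), while Lemma~\ref{l:intersection} is unchanged, and the $(\ln L)^{-N/2}$ normalization in \eqref{S_L'} absorbs this since $\lfloor N/2\rfloor\le N/2$. This matches the paper's proof in Appendix~\ref{app:d=2}.
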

{\it Proof}. The only difference with the proof of Theorem~\ref{t:countingterms} comes from estimate \eqref{est box} since the latter is obtained by applying Theorem~\ref{t:numbertheory}, and in the case $d=2$ we should apply Theorem~\ref{t:numbertheory'} instead. Namely, now the r.h.s. of \eqref{est box} takes the form $CL^{2(d-1)} \ln L\big[ R^{2d} + R^{N_2}(\ln L)^{-1}\big]\leq C'R^{N_2}L^{2(d-1)}\ln L $. 
Since   Lemma~\ref{l:intersection} remains unchanged, then for $d=2$  the r.h.s. of estimate \eqref{sum_w_est}, 
which holds for irreducible matrices $\al$, should be multiplied by $\ln L$.  In the case of reducible matrix $\al$ we apply the latter estimate to each irreducible block, which gives the factor $(\ln L)^{\lfloor N/2\rfloor}$ in the r.h.s. of \eqref{est_prop3.3}, since the number of blocks does not exceed $\lfloor N/2\rfloor$.
However, the final estimate of Theorem~\ref{t:countingterms} remains unchanged because of the factor $(\ln L)^{-N/2}$ in the definition \eqref{S_L'} of the sum $S_{L,N}$. 
\qed

Since in the case $d=2$ we choose $\rho=\eps L/\sqrt{\ln L}$, then the terms $n_{s,L}^{(k)}$ are given by formula \eqref{n_s^(k)}, 
multiplied by $(\ln L)^{-k/2}$.  The proof of
  Proposition~\ref{p:approx} is analogous to that presented in Appendix~\ref{app:approx}  for $d=2$. The 
    only difference being
  the use of Theorem~\ref{t:countingterms'}  in place of
  Theorem~\ref{t:countingterms}.  Lemma~\ref{l:E-fin} remains
unchanged, so  the correlations
 $L^k (\ln L)^{-k/2} \EE a^{(m)}_s \bar a^{(n)}_s(\tau_2)$, $m+n=k$, 
  are given by formula \eqref{final_Ea^ma^n}, multiplied by $(\ln L)^{-k/2}$ (recall that the sum of these correlations makes $n_{s,L}^{(k)}$). 
We see that the correlations take the form \eqref{S_L'}, so Theorems~\ref{t:numbertheory'} and \ref{t:countingterms'} apply to study them. 

The rest of the proof of Theorem~\ref{t_k1} literally repeats that for 
 the case $d\geq 3$, except the appearance of the $(\ln L)^{-k/2}$
 factors, coming from the new definition of $\rho$. Now the estimates,
 using Theorem~\ref{t:numbertheory}, should be relaxed since the
 estimate provided by Theorem~\ref{t:numbertheory'} is slightly weaker
 than that of Theorem~\ref{t:numbertheory}. In particular, in the
 r.h.s. of \eqref{decompp}, \eqref{diff-limit<2} and \eqref{diff-limit} the factor 
  $L^{-1/2}$ should be replaced by  $(\ln L)^{-1}$. This results in the stronger
    lower bound for $L$ in Theorem~\ref{t_k1}: now it is $L\geq  e^{\eps^{-1}}$ instead of $L\geq \eps^{-2}$ (see Theorem A).

Theorem~\ref{t_k2}, as Theorem~\ref{t_k1}, remains unchanged, except the lower bound for $L$ which is modified as above. Indeed, the theorem follows from Theorem~\ref{t_k1} and Proposition~\ref{p:approx}, 
 and the term $\chi_2(L)^{-N+1}=(\ln L)^{(N-1)/2}$,
  appearing  in estimate~\eqref{eq:comp_moment} for $d=2$ does not change the assertion of Theorem~\ref{t_k1}.

\subsection{Discussion of Remark~\ref{rem:d=2}}
In fact, Theorem 1.4 from \cite{number_theory} provides  more delicate information about $S_{L,2}$ than what is 
 stated in Theorem~\ref{t:numbertheory'}. Namely, if $d=2$ then due to \cite{number_theory}, 
\be\non	\left| \frac{L^{2(1-d)}}{\ln L} \sum_{z: \  z_1\cdot z_2 =0} \Phi(z) -  C_2 \int_{\Sigma_0}
{\Phi(z)} \, 
\mu^{\Sigma_0} (
dz_1dz_2) 
-\frac{\sigma_1^\Phi(L)}{\ln L} \right|
\leq C \frac{\|\Phi\|_{N_1,N_2}}{L^{1/6}},
\ee
 where $\sigma_1^\Phi$ is a certain function satisfying $|\sigma_1^\Phi(L)|\leq C_1\|\Phi\|_{N_1,N_2}$, uniformly in $L$. See \cite{number_theory} for an explicit (but complicated) formula for $\sigma_1^\Phi$.
Consequently, 
\be\non	\left| S_{L,2}(\Phi) -  C_2 \int_{\Sigma_0}
{\Phi(z)} \, 
\mu^{\Sigma_0} (
dz_1dz_2) 
-\frac{\wt\sigma_1^\Phi(L)}{\ln L} \right|
\leq C \frac{\|\Phi\|_{N_1,N_2}}{L^{1/6}},
\ee
where 
 \be\non
\wt\sigma_1^\Phi(L) := \sigma_1^\Phi(L) - L^{2(1-d)}\sum_{z:\ z_1=0 \mbox{ \footnotesize or }z_2=0}\Phi(z)
 \ee
 still satisfies  $|\wt\sigma_1^\Phi(L)|\leq C\|\Phi\|_{N_1,N_2}$ in view of \eqref{z12=0}.
 Then estimate \eqref{diff-limit} refines as
\be\lbl{2.12new}
\Big|n^{\le 2}_s - \nL{s}-\frac{f(\tau, L)}{\ln L}\Big| \le C^\#(s)\eps^2(L^{-  1/6} + \eps)\,,
\ee
where $f(\tau,L):=\wt\sigma_1^{\Phi(\tau)}(L)$ and $\Phi(\tau)$ is the function satisfying $n_{s,L}^{\le 2}(\tau)=S_{L,2}(\Phi(\tau))$ that comes from Corollary~\ref{cor:Diag}.
By \eqref{cor_diag_est} and the estimate for $\wt\sigma_1^\Phi$ above, the function $f(\tau,L)$ is bounded  uniformly in $\tau$.
The rest of the proofs of Theorems~\ref{t_k1} and \ref{t_k2} remain unchanged while the estimate \eqref{2.12new} leads to the assertion of the remark.

\medskip 

{\bf Acknowledgements.} The work of AD was performed at the Steklov International Mathematical Center and supported by the Ministry of Science and Higher Education of the Russian Federation (agreement no. 075-15-2019-1614). SK was supported  by  Agence Nationale de la Recherche through   the grant    17-CE40-0006.

\end{document}